\newtheorem{thm}{Theorem}[section]
\newtheorem{cor}[thm]{Corollary}
\newtheorem{lemma}[thm]{Lemma}
\newtheorem{prop}[thm]{Proposition}
\newtheorem{claim}[thm]{Claim}
\renewcommand{\proofname}{Proof}
\newtheorem{proposition}[thm]{Proposition}
\newtheorem{question}[thm]{Question}
\theoremstyle{definition}
\newtheorem{remark}[thm]{Remark}
\newtheorem{definition}[thm]{Definition}
 \newtheorem{example}[thm]{Example}
\def\Exc{\operatorname{Exc}}
\def\coker{\operatorname{coker}}
\def\codim{\operatorname{codim}}
\def\min{\operatorname{min}}
\def\im{\operatorname{im}}
\def\max{\operatorname{max}}
\def\c1{\operatorname{c_1}}
\def\c2{\operatorname{c_2}}
\def\Cliff{\operatorname{Cliff}}
\def\gon{\operatorname{gon}}
\def\Hilb{\operatorname{Hilb}}
\def\Sym{\operatorname{Sym}}
\def\rk{\operatorname{rk}}
\def\expdim{\operatorname{expdim}}
\def\NS{\operatorname{NS}}
\def\gg{\mathfrak{g}}
\def\CC{{\mathbb C}}
\def\ZZ{{\mathbb Z}}
\def\DD{{\mathbb D}}
\def\PP{{\mathbb P}}
\def\A{{\mathcal A}}
\def\SS{{\mathcal S}}
\def\G{{\mathcal G}}
\def\L{{\mathcal L}}
\def\M{{\mathcal M}}
\def\N{{\mathcal N}}
\def\O{{\mathcal O}}
\def\E{{\mathcal E}}
\def\H{{\mathcal H}}
\def\F{{\mathcal F}}
\def\K{{\mathcal K}}
\def\V{{\mathcal V}}
\def\C{{\mathcal C}} 
\def\K{{\mathcal K}}
\def\Q{{\mathcal Q}}
\def\x{\times}                   % product (fiber)
\def\cong{\simeq}
\def\sub{\subseteq}
\def\+{\oplus}                   % direct sum
\def\*{\otimes}                  % tensor product
\def\Pic{\operatorname{Pic}}
\def\Supp{\operatorname{Supp}}
\def\Supp{\operatorname{Supp}}
\def\Sing{\operatorname{Sing}}
\begin{document}

\title{Severi Varieties and Brill-Noether theory of curves on abelian surfaces}

\author[A.~L.~Knutsen]{Andreas Leopold Knutsen}
\address{Andreas Leopold Knutsen, Department of Mathematics, University of Bergen,
Postboks 7800,
5020 Bergen, Norway}
\email{andreas.knutsen@math.uib.no}

\author[M.~Lelli-Chiesa]{Margherita Lelli-Chiesa}
\address{Margherita Lelli-Chiesa, Centro di Ricerca Matematica ``Ennio De Giorgi'', Scuola Normale Superiore, Piazza dei Cavalieri 3, 56100 Pisa, Italy}
\email{margherita.lellichiesa@sns.it}

\author[G.~Mongardi]{Giovanni Mongardi}
\address{Giovanni Mongardi, Department of Mathematics, University of Milan, via Cesare Saldini 50, 20133 Milan, Italy}
\email{giovanni.mongardi@unimi.it}

\begin{abstract} 
Severi varieties and Brill-Noether theory of curves on K3 surfaces are well understood. Yet, quite little is known for curves %lying 
on abelian surfaces. Given a general abelian surface $S$ with polarization $L$ of type $(1,n)$, we prove nonemptiness and regularity of the Severi variety parametrizing $\delta$-nodal curves  in the linear system $|L|$ for $0\leq \delta\leq n-1=p-2$ (here $p$ is the arithmetic genus of any curve in $|L|$). We also show that a general genus $g$ curve having as nodal model a hyperplane section of some $(1,n)$-polarized abelian surface admits only finitely many such models up to translation; moreover, any such model lies on finitely many $(1,n)$-polarized abelian surfaces. 
Under certain assumptions, a conjecture of Dedieu and Sernesi is proved concerning the possibility of deforming a genus $g$ curve in $S$ equigenerically to a nodal curve.  The rest of the paper deals with the Brill-Noether theory of curves in $|L|$. It turns out that a general curve in $|L|$ is Brill-Noether general. However, as soon as the Brill-Noether number is negative and some other inequalities are satisfied, the locus $|L|^r_d$ of smooth curves in $|L|$ possessing a $g^r_d$ is nonempty and has a component of the expected dimension. As an application, we obtain the existence of a component of the Brill-Noether locus $\M^r_{p,d}$ having the expected codimension in the moduli space of curves $\M_p$. For $r=1$, the results are generalized to nodal curves.
\end{abstract}

\maketitle
\section{Introduction}
Sections of $K3$ surfaces have been investigated at length; yet, very little is known about curves lying on an abelian surface. If $(S,L)$ is a general polarized $K3$ surface, the proof of nonemptiness and regularity of the Severi variety parametrizing $\delta$-nodal curves in the linear system $|mL|$ with $0\leq \delta\leq\dim|mL|$ is due to Mumford for $m=1$ (cf. \cite[Appendix]{MM}), and Chen \cite{Ch} in the general case. The main result concerning the Brill-Noether theory of linear sections of $S$ is due Lazarsfeld \cite{La}, who proved that a general curve in the linear system $|L|$ is Brill-Noether general; furthermore, no curve in $|L|$ possesses any linear series with negative Brill-Noether number. Lazarsfeld's theorem provided an alternate proof of the Gieseker-Petri Theorem, thus highlighting the vast potential of specialization to $K3$ sections. Indeed, this technique proved useful in many contexts, such as Voisin's proof of Green's Conjecture for a general curve of any given genus \cite{V1,V2}, higher rank Brill-Noether theory \cite{FO,LC}, the proof of transversality of some Brill-Noether loci \cite{Fa,LC} and the study of rational curves on hyperk\"ahler manifolds of $K3^{[n]}$-type \cite{CK}. 

Our aim is to initiate the study of curves on abelian surfaces and provide a first application by exhibiting components of the Brill-Noether locus having the expected codimension in the moduli space of curves. Further applications to generalized Kummer manifolds will appear in \cite{KLM}. The main issue is that, unlike $K3$ surfaces, abelian surfaces are irregular; in particular, vector bundles techniques \`a la Lazarsfeld on abelian surfaces do not work as nicely as on $K3$ surfaces  (cf., e.g., Remark \ref{rem:optimusprime} and Example \ref{cex2}).

From now on, the pair $(S,L)$ will be a general $(1,n)$-polarized abelian surface; following \cite{LS}, we denote by $\{L\}$ the continuous system parametrizing curves in the linear system $|L|$ and in all of its translates by points of $S$, and by $p:=n+1$ the arithmetic genus of any curve in $\{L\}$. 

For fixed $\delta\geq 0$, we consider the Severi varieties $V_{|L|,\delta}(S)$ and $V_{\{L\},\delta}(S)$ parametrizing integral $\delta$-nodal curves in $|L|$ and $\{L\}$, respectively; they are locally closed in $|L|$ (resp. $\{L\}$) and have expected codimension  $\delta$.  
Recently, Dedieu and Sernesi \cite{DS} proved that any integral curve $C\in\{L\}$ deforms to a nodal curve of the same geometric genus, unless the normalization of $C$ is trigonal. However, the nonemptiness problem for $V_{\{L\},\delta}(S)$ is 
still open and we solve it by proving the following:

\begin{thm} \label{thm:main1}
  Let $(S,L)$ be a general polarized abelian surface with $L$ of type $(1,n)$. Then, for any integer $\delta$ such that
$0 \leq \delta \leq p-2=n-1$, the Severi variety 
$V_{\{L\},\delta}(S)$ (respectively, $V_{|L|,\delta}(S)$) is nonempty and smooth of dimension $p-\delta$ (resp., $p-\delta-2$). 
\end{thm}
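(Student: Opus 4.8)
The plan is to treat the continuous system $\{L\}$ as primary and to deduce the $|L|$--statement by factoring out the two--dimensional translation action: sending a curve in $\{L\}$ to its linear--equivalence class realizes $V_{\{L\},\delta}(S)$ as a smooth fibration over an open subset of $\Pic(S)$ with fibres isomorphic to $V_{|L|,\delta}(S)$, so smoothness of the former of dimension $p-\delta$ is equivalent to smoothness of the latter of dimension $p-\delta-2$. I would then split the assertion into (a) regularity for every $0\le\delta\le n-1$, and (b) nonemptiness for the maximal value $\delta=n-1$. Granting (a), the general smoothing principle for Severi varieties lets one independently smooth any subset of the $n-1$ nodes of a curve in $V_{\{L\},n-1}(S)$, producing points of $V_{\{L\},\delta}(S)$ for every smaller $\delta$; thus (a) and (b) together give the theorem.

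For regularity I would argue through the normalization. Let $C\in\{L\}$ be integral with $\delta$ nodes, let $\nu:\tilde C\to C\subset S$ be the normalization, of genus $g=p-\delta\ge 2$, and let $f:\tilde C\to S$ be the induced map. Equisingular (equivalently, equigeneric) deformations of $C$ inside $\{L\}$ are exactly the deformations of $f$, whose tangent space is $H^0(\tilde C,N_f)$ and whose obstructions lie in $H^1(\tilde C,N_f)$, where $N_f=\coker(T_{\tilde C}\to f^*T_S)$. The key simplification is that the tangent bundle of an abelian surface is trivial, so $f^*T_S\cong\O_{\tilde C}^{\oplus 2}$ and, taking determinants in $0\to T_{\tilde C}\to f^*T_S\to N_f\to 0$, one obtains $N_f\cong\omega_{\tilde C}$. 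Hence $\dim H^0(\tilde C,N_f)=h^0(\omega_{\tilde C})=g=p-\delta$, independently of $C$. Since every component of $V_{\{L\},\delta}(S)$ has dimension at least the expected value $p-\delta$ (each node imposing at most one condition on the $p$--dimensional family $\{L\}$), while the Zariski tangent space at $[C]$ has dimension exactly $p-\delta$, the variety is smooth of dimension $p-\delta$ at $[C]$. Note that although $h^1(N_f)=h^0(\O_{\tilde C})=1\ne 0$, this potential obstruction is harmless: the a priori lower bound already forces equality with the tangent dimension.

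For nonemptiness at $\delta=n-1$ I would construct a genus--$2$ curve with $n-1$ nodes by pushing a theta divisor forward along an isogeny. Start from a smooth genus--$2$ curve $\tilde C$ with Jacobian $J$ and theta divisor $\Theta\cong\tilde C$ (so $\Theta^2=2$), and let $\phi:J\to S$ be an isogeny of degree $n$ with kernel $G$. Set $C:=\phi(\Theta)$. For general $G$ the restriction $\phi|_\Theta$ is birational onto $C$, and the singularities of $C$ arise precisely from pairs $x,x+k\in\Theta$ with $0\ne k\in G$, i.e.\ from the points of $\Theta\cap(\Theta-k)$; since $\Theta\cdot(\Theta-k)=\Theta^2=2$ and $k,-k$ give the same unordered pairs, a transverse configuration yields $\tfrac12\sum_{0\ne k\in G}\#\bigl(\Theta\cap(\Theta-k)\bigr)=\tfrac12\cdot(n-1)\cdot 2=n-1$ nodes. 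A projection--formula computation gives $C^2=2n$, hence $p_a(C)=n+1=p$ and geometric genus $p_a(C)-(n-1)=2$; as $\phi$ is an isogeny, $[C]$ is a polarization of type $(1,n)$. This produces an integral genus--$2$ curve with exactly $n-1$ nodes on $S$.

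The main obstacle is to guarantee that the construction reaches a \emph{general} $(S,L)$---in particular one with $\Pic(S)=\ZZ L$---rather than only special abelian surfaces. Here I would invoke the Hecke correspondence between $\A_{(1,1)}$ and $\A_{(1,n)}$ given by degree--$n$ isogenies: both projections are finite and surjective, so a general $S\in\A_{(1,n)}$ is isogenous, via a degree--$n$ map from a principally polarized abelian surface $A$, exactly as required. For general $S$ the surface $A$ has Picard number one, hence is the Jacobian $J=J(\tilde C)$ of a smooth genus--$2$ curve, and $S$ too has Picard number one; consequently $[C]=L$ up to translation, so $C\in\{L\}$ and $[C]\in V_{\{L\},n-1}(S)$. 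The remaining genericity checks---that $\phi|_\Theta$ is birational, that $\Theta$ meets each translate $\Theta-k$ transversally, and that no three branches of $C$ pass through a common point, so that all singularities are honest nodes and there are exactly $n-1$ of them---hold for the general member of the family and hence over a dense subset of $\A_{(1,n)}$. Combining this with the regularity statement and the smoothing principle completes the proof for all $0\le\delta\le n-1$.
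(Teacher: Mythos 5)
Your reduction of $V_{\{L\},\delta}$ to $V_{|L|,\delta}$, your regularity argument, and your use of the smoothing principle all match what the paper does: the computation $N_f\cong\omega_{\tilde C}$ from the triviality of $T_S$, giving $h^0(N_f)=g=p-\delta$ against the a priori lower bound $p-\delta$ on every component, is exactly the content of \cite[Props.~1.1--1.2]{LS}, which the paper imports as Proposition \ref{prop:rego}; the fibration over $\hat S$ is Remark \ref{rem:sticazzi}; and independent smoothing of nodes of a regular Severi variety is used verbatim in \S\ref{sec:modular}. Where you genuinely diverge is nonemptiness: the paper degenerates to the semiabelian surface $(S_0,L_0)$ and builds limit curves combinatorially (Lemmas \ref{lemma:v0} and \ref{lem:defo}), whereas you construct the maximally nodal ($\delta=n-1$, genus~$2$) curve directly on a smooth surface as $\phi(\Theta)$ for an isogeny $\phi:J\to S$ of degree $n$. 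This is an attractive, more classical route, and it is consistent with the paper's own observation in \S\ref{sec:modular} that genus-$2$ curves in $\{L\}$ correspond to isogenies from Jacobians; its drawback is that it proves only Theorem \ref{thm:main1}, while the degeneration machinery is reused for Theorems \ref{thm:main2} and \ref{thm:main4}.

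Two steps in your nonemptiness argument are asserted rather than proved, and both need real work. First, the statement that $C=\phi(\Theta)$ has \emph{exactly} $n-1$ nodes and no worse singularities requires (a) transversality of $\Theta$ and $\Theta\ominus k$ for every $k\in G\setminus\{0\}$ and (b) that no coset $x\+G$ meets $\Theta$ in three or more points. Saying these ``hold for the general member'' presupposes that the locus where they fail is proper inside the (irreducible) parameter space of pairs $(J,G)$, which must be checked on at least one example or by a monodromy argument; since $G$ is a \emph{discrete} datum, this is not an automatic general-position statement. For (a) one can make it concrete: taking $\Theta$ symmetric, the Gauss map of $\Theta$ is the hyperelliptic map, so $\Theta$ and $\Theta\ominus k$ are tangent at $p$ exactly when $p\+p=\ominus k$, i.e.\ exactly when $\Theta$ contains a torsion point of exact order $>2$ dividing $2n$; ruling this out for general $J$ is true but is precisely the kind of verification you must supply. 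Second, ``a general $S\in\A_{(1,n)}$ receives a degree-$n$ isogeny from a ppav with $\phi_*\Theta\equiv L$'' is not symmetric in the two directions of the Hecke correspondence: the canonical isogeny attached to a $(1,n)$-polarization goes $\iota:S\to A$ with $\iota^*\Theta_A=L$ (this is what the paper uses in Proposition \ref{frittata}), and $\iota^{-1}(\Theta_A)$ is a \emph{smooth} genus-$p$ curve, not your nodal one. To get an isogeny \emph{onto} a general $(1,n)$-surface you must dualize ($\hat\iota:A\to\hat S$), use that $(S,L)\mapsto(\hat S,\hat L)$ preserves the type $(1,n)$ and is dominant on $\A_{(1,n)}$, and only then apply your Picard-rank-one argument to conclude $\phi_*\Theta\equiv L$; without this the construction might a priori only reach surfaces of some other polarization type $(d_1,d_2)$ with $d_1d_2=n$. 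Both gaps are fillable, but as written the proof is incomplete exactly where the paper's degeneration is designed to make the existence statement mechanical.
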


Note that the bound on $\delta$ is also necessary since $\dim |L|=p-2$. The condition of being smooth of the expected dimension is often referred to as being {\it regular}.

Given Theorem \ref{thm:main1}, it is                                        
natural to investigate the variation in moduli of nodal curves lying on abelian surfaces.
In other words, one is interested in the dimension of the locus $\mathfrak{A}_{g,n}$ parametrizing curves in $\M_g$ that admit a nodal model (of arithmetic genus $p=n+1$) which is a hyperplane section of some $(1,n)$-polarized abelian surface. A simple count of parameters shows that this dimension is at most $g+1$ (cf. \S \ref{sec:modular}) and we prove that the bound is effective:

\begin{thm} \label{thm:moduli}
  For any $n \geq 1$ and $2 \leq g \leq p=n+1$, the  locus $\mathfrak{A}_{g,n}$ of curves in $\M_g$ admitting a $(p-g)$-nodal model as a hyperplane section of some $(1,n)$-polarized abelian surface has a component of dimension $g+1$. 

In particular, a general curve in such a component occurs as the normalization of a nodal hyperplane section of only finitely many $(1,n)$-polarized abelian surfaces. 
\end{thm}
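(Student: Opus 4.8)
The plan is to realize $\mathfrak{A}_{g,n}$ as the image of a modular morphism and to show that this morphism is generically finite modulo translations. Put $\delta=p-g$, so that $0\le\delta\le p-2$ and Theorem \ref{thm:main1} applies, and let $\A_{1,n}$ be the moduli space of $(1,n)$-polarized abelian surfaces, which is irreducible of dimension $3$. Over it sits the universal Severi variety
\[
\W=\{(S,L,C)\ :\ (S,L)\in\A_{1,n},\ C\in V_{\{L\},\delta}(S)\},
\]
and by Theorem \ref{thm:main1} the fibre of $\W\to\A_{1,n}$ over a general point is smooth of dimension $p-\delta=g$; hence there is a component $\W_0$ dominating $\A_{1,n}$, smooth along a dense open subset $\W^\circ$, with $\dim\W_0=3+g$. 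Assigning to $(S,L,C)$ the isomorphism class of the normalization $\Gamma:=\widetilde C$, a smooth curve of genus $g$, defines a morphism $m:\W\to\M_g$ with image $\mathfrak{A}_{g,n}$. Translating $C$ by points of $S$ preserves $\{L\}$ and the number of nodes and leaves $\Gamma$ unchanged, so every fibre of $m$ contains the closed irreducible $2$-dimensional translation orbit of the given triple (the orbit is $2$-dimensional because the stabilizer of an integral ample curve is finite); this already gives the a priori bound $\dim\mathfrak{A}_{g,n}\le\dim\W_0-2=g+1$.

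The heart of the proof is to show that these orbits exhaust the fibres, i.e. that $\ker(dm)$ is exactly $2$-dimensional along $\W^\circ$. Fix a smooth point $(S,L,C)$ and let $f:\Gamma\to S$ be the composite of $\Gamma\to C$ with $C\hookrightarrow S$. A tangent vector in $\ker(dm)$ is a first-order deformation of $(S,L,C)$ fixing the class of $\Gamma$; performing simultaneous normalization along the ($\delta$-nodal, hence equisingular) family, such a vector is the same as a first-order deformation of the map $f$ with fixed source and with target varying in $\A_{1,n}$. These deformations sit in the exact sequence
\[
0\longrightarrow H^0(\Gamma,f^*T_S)\longrightarrow \ker(dm)\longrightarrow H^1(S,T_S)\stackrel{\partial}{\longrightarrow}H^1(\Gamma,f^*T_S),
\]
where $\partial$ sends a deformation $\xi$ of $S$ to the obstruction $f^*\xi$ to deforming $f$ along it. The decisive input is that $S$ is abelian: its tangent bundle is trivial, $T_S\cong\O_S\otimes T_0S$, so $f^*T_S\cong\O_\Gamma\otimes T_0S$. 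Consequently $H^0(\Gamma,f^*T_S)=T_0S\cong\CC^2$ consists precisely of the infinitesimal translations, and $\partial$ is the pullback $f^*:H^1(S,\O_S)\to H^1(\Gamma,\O_\Gamma)$ tensored with $T_0S$. Since $C$ is ample it generates $S$, so $f^*$ is injective on $H^1(-,\O)$ and hence $\partial$ is injective. Therefore $\ker(dm)=\CC^2$ coincides with the tangent space to the translation orbit.

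It follows that along $\W^\circ$ every fibre of $m$ has $2$-dimensional Zariski tangent space, so each of its irreducible components has dimension at most $2$; since each such component is irreducible and contains the $2$-dimensional orbit of any of its points, it equals that orbit, and by finiteness of type the fibre is a finite union of translation orbits. Restricting $m$ to the smooth locus of $\W_0$, the general fibre is $2$-dimensional, whence $m(\W_0)$ is a component of $\mathfrak{A}_{g,n}$ of dimension $\dim\W_0-2=g+1$; combined with the a priori bound this proves the first assertion. For the second, a general curve $\Gamma$ in this component has all of its relevant preimages in $\W^\circ$, where the fibre consists of finitely many translation orbits; hence $\Gamma$ is the normalization of a $\delta$-nodal curve in $\{L\}$ for only finitely many triples $(S,L,C)$ up to translation, and in particular only finitely many surfaces $(S,L)$ occur.

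The main obstacle is precisely the rigidity computation of the second paragraph, and what makes it succeed is special to abelian surfaces: the triviality of $T_S$ both identifies $H^0(\Gamma,f^*T_S)$ with the space of translations and collapses the obstruction map $\partial$ to the pullback on $H^1(-,\O)$, whose injectivity is the elementary fact that an ample curve generates its abelian surface. On a $K3$ or a surface of general type neither simplification is available, which is why the argument is peculiar to the abelian case. Two points I would verify explicitly are: that the identification of $\ker(dm)$ with deformations of $f$ having fixed source is valid in families, which uses simultaneous normalization along the equisingular Severi locus together with the smoothness supplied by Theorem \ref{thm:main1}; and that the translation orbits are genuinely $2$-dimensional, which holds because a translation fixing an integral curve $C$ permutes its finite node set and is therefore torsion, so that the stabilizer of $C$ in $S$ is finite.
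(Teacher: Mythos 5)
Your proof is correct, but it takes a genuinely different route from the paper's. The paper never computes a Kodaira--Spencer kernel: it specializes inside the universal Severi variety to the maximally nodal case $\delta=n-1$ (geometric genus $2$), where finiteness of the fibre modulo translation follows from the finiteness of isogenies of bounded degree $J(\widetilde C)\to S$ from the Jacobian of a fixed genus-$2$ curve onto $(1,n)$-polarized abelian surfaces; it then fixes a nested chain of components $\V^\dagger_{n,n-1}\subset\overline{\V}^\dagger_{n,n-2}\subset\cdots\subset\overline{\V}^\dagger_{n,\delta}$ via the partial compactification \eqref{eq:partcomp} (available because regularity lets the marked nodes be smoothed independently) and propagates the two-dimensionality of the general fibre by upper semicontinuity. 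You replace all of this by the infinitesimal identity $\ker(dm)\cong H^0(\Gamma,f^*T_S)=T_0S$, which rests on the two features you correctly isolate as special to abelian surfaces: triviality of $T_S$, which makes $H^0(f^*T_S)$ exactly the translations, and the identification of the obstruction map $H^1(T_S)\to H^1(f^*T_S)$ with $f^*\otimes\mathrm{id}_{T_0S}$ on $H^1(\cdot,\O)$, injective because an irreducible curve with $C^2=2n>0$ generates $S$. Your computation is valid at \emph{every} point of the universal Severi variety (which is everywhere smooth by Proposition \ref{prop:rego}), so it actually yields more than the statement: every fibre of $m$ is a finite union of translation orbits and hence \emph{every} component of $\mathfrak{A}_{g,n}$ has dimension $g+1$, whereas the paper only exhibits one component along its chosen chain. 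What the paper's approach buys in exchange is that it avoids deformation theory of maps entirely, reducing to an elementary finiteness of isogenies, and it builds the stratified universal Severi variety that is reused elsewhere. Two small repairs to your write-up: the universal family $\W$ exists only over a fine moduli space, so you should work, as the paper does, over $A(1,n)$ with a level structure (or a finite cover) --- this changes nothing; and your justification that the translation stabilizer of $C$ is finite via its node set fails when $\delta=0$ (the case $g=p$ is included in the theorem), but is repaired either by noting that a positive-dimensional stabilizer would force $C$ to be a union of translates of an elliptic curve, contradicting $C^2>0$, or simply by the two-dimensionality of the image of $H^0(f^*T_S)$ in $H^0(N_f)$, which your computation already provides.
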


Analogous results for smooth curves on $K3$ surfaces are due to Mori and Mukai
\cite{MM,mu0,Mu1} and there have been recent advances in the case of nodal curves \cite{Ke,CFGK}. On the contrary, nothing was known so far for (even smooth) curves lying on abelian surfaces, except in the principally polarized case. 

Another relevant question  is whether any genus $g$ curve in $\{L\}$ can be deformed equigenerically to a nodal curve  in $\{L\}$ (classically known for plane curves \cite{AC1,AC2,zar} and is currently being studied for curves on other surfaces \cite{DS}). An affirmative answer would imply that Severi varieties provide essential information about equigeneric families of curves in $\{L\}$. We show that this is indeed the case for $g\geq 5$, thus proving Conjecture C in \cite{DS} on a general $(1,n)$-polarized surface. 

\begin{thm} \label{thm:conjDS}
  Let $(S,L)$ be a general polarized abelian surface of type $(1,n)$. If $g \geq 5$, then the locus of curves in $\{L\}$ with geometric genus $g$ lies in the Zariski closure of the Severi variety $V_{\{L\},p-g}(S)$.
\end{thm}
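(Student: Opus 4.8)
The plan is to prove the containment of the equigeneric locus $W\subset\{L\}$ of integral curves of geometric genus $g$ in $\overline{V_{\{L\},p-g}(S)}$ by combining the theorem of Dedieu and Sernesi with a dimension estimate that isolates the trigonal curves as a negligible sublocus. I would begin by recording two structural facts. Since $\dim\{L\}=p$ and prescribing the $\delta$-invariant to be $p-g$ is a condition of codimension at most $p-g$, every irreducible component of $W$ has dimension at least $g$. On the other hand, $V:=V_{\{L\},p-g}(S)$ is open in $W$ (being nodal is an open condition among integral curves of fixed geometric genus) and, by Theorem \ref{thm:main1}, is smooth of pure dimension $g$; hence $\overline V$ is pure of dimension $g$.

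Next I would exploit Dedieu--Sernesi to dispose of the generic stratum. If $C\in W$ has normalization $\widetilde C$ that is \emph{not} trigonal, then $C$ carries a one-parameter equigeneric deformation $\{C_s\}_{s\in\Delta}$ inside $\{L\}$ with $C_s$ nodal for $s\neq0$, so that $C=C_0\in\overline V$. Letting $W^{t}\subseteq W$ denote the closed sublocus of curves with trigonal normalization, this gives $W\setminus W^{t}\subseteq\overline V$. It therefore suffices to show $\dim W^{t}<g$: since every component of $W$ has dimension at least $g$, such a bound prevents $W^{t}$ from containing any component, so that $W\setminus W^{t}$ is dense in $W$ and $W=\overline{W\setminus W^{t}}\subseteq\overline V$.

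To estimate $\dim W^{t}$ I would project to moduli via $\pi\colon W\to\M_g$, $C\mapsto[\widetilde C]$. For the general $S$, whose automorphisms are, modulo translations, a finite group, two members of $\{L\}$ with isomorphic normalizations differ by a translation of $S$ (the homomorphisms $J(\widetilde C)\to S$ underlying the normalization maps form a discrete set), so the fibres of $\pi$ have dimension at most $2$. Writing $\A^{S}:=\pi(W)\subseteq\M_g$ for the locus of curves arising as normalizations of genus $g$ members of $\{L\}$ — whose dimension is at most $g-2$ by the parameter count underlying Theorem \ref{thm:moduli}, carried out with $S$ fixed rather than varying — and $\mathcal T_g\subseteq\M_g$ for the trigonal locus, of codimension $g-4$, we obtain $\dim W^{t}\leq\dim(\A^{S}\cap\mathcal T_g)+2$. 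If this intersection had the expected dimension $(g-2)-(g-4)=2$, then $\dim W^{t}\leq4$, which is $<g$ exactly when $g\geq5$. This pinpoints the role of the hypothesis: for $g\leq4$ the trigonal locus is not a proper condition in $\M_g$ and the stratification collapses.

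The hard part, and the real content of the argument, is to exclude \emph{excess} intersection of $\A^{S}$ with $\mathcal T_g$, i.e.\ to establish $\dim(\A^{S}\cap\mathcal T_g)\leq g-3$. Here I would use that for $g\geq5$ a trigonal curve possesses a \emph{unique} $g^1_3$, so the trigonal pencil $A$ is canonically attached to $\widetilde C$ and varies algebraically over $W^{t}$. Pushing the fibres of $|A|$ forward along the Albanese factorization $\widetilde C\to J(\widetilde C)\to S$ of the normalization map, one gets a rational pencil of length-$3$ subschemes of $S$ all sharing the same sum under the group law; analysing how such a pencil of ``collinear triples'' can lie on the \emph{general} abelian surface $S$ — crucially using $\End(S)=\ZZ$ and that $S$ is not isogenous to a product — should rigidify the trigonal members sufficiently to force $\dim W^{t}<g$. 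Turning this heuristic into a genuine rigidity statement, rather than a mere expected-dimension count, is where the main work lies; once it is secured, the facts above assemble immediately into the theorem.
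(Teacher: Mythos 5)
Your reduction is sound and matches the first half of the paper's argument: by Dedieu--Sernesi, every integral genus~$g$ curve in $\{L\}$ with nontrigonal normalization lies in $\overline{V_{\{L\},p-g}(S)}$, and since every component of the equigeneric locus $W$ has dimension (at least) $g$, it suffices to show that the sublocus $W^{t}$ of curves with trigonal normalization has dimension $<g$. The problem is that you never establish this bound. You yourself flag the decisive step --- ruling out excess intersection of $\A^{S}$ with the trigonal locus $\mathcal T_g$, or equivalently the ``rigidity statement'' about pencils of collinear triples on a general $S$ --- as ``where the main work lies,'' and the heuristic you offer ($\End(S)=\ZZ$, $S$ not isogenous to a product) is not an argument: nothing in it controls the dimension of a family of trigonal normalizations mapping to $S$, and a priori $\A^{S}\cap\mathcal T_g$ could be badly non-proper. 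So as written the proof has a genuine gap precisely at its load-bearing step.

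The paper closes exactly this gap by a completely different mechanism, and without passing through $\M_g$ at all: Theorem \ref{thm:boundingdim} states that \emph{any} reduced family $V\subseteq\{L\}$ of integral curves of geometric genus $g$ whose normalizations carry a $g^1_k$ satisfies $\dim V+\dim G^1_k(\widetilde C)\le 2k-2$; taking $k=3$ gives $\dim W^{t}\le 4<g$ for $g\ge 5$ directly. That theorem is proved by transporting each $g^1_k$ to a rational curve in the generalized Kummer variety $K^{[k-1]}(S)$ and combining Ran's lower bound on deformations of rational curves on hyperk\"ahler manifolds with Mori's bend-and-break and the Amerik--Verbitsky argument, using only that $[L]$ is indecomposable in $\NS(S)$. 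If you want to salvage your approach, the honest fix is to import this bound (or prove an equivalent one); the detour through $\M_g$, the $2$-dimensional fibres of $\pi$, and the expected-dimension count for $\A^{S}\cap\mathcal T_g$ then become unnecessary, and in any case the fibre-dimension and $\dim\A^{S}\le g-2$ claims would themselves need the upper bound $\dim W\le g$ from \cite[Prop.~4.16]{DS}, which you only state in the ``$\ge$'' direction.
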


The theorem also holds for $(S,L)$ a general primitively polarized $K3$ surface, cf. Remark \ref{rem:ancheK3}.

Henceforth, we focus on the Brill-Noether theory of curves in $|L|$. We denote by $|L|^r_d$ the Brill-Noether loci parametrizing smooth curves $C\in |L|$ carrying a linear series of type $g^r_d$. 

\begin{thm} \label{thm:main4}
Let $(S,L)$ be a general polarized abelian surface with $L$ of type $(1,n)$, and fix integers $r\geq 1$ and $d\geq 2$. Then the following hold: 
  \begin{itemize}
  \item[(i)] for a general $C\in |L|$, the Brill-Noether variety $G^r_d(C)$ is equidimensional of dimension $\rho(p,r,d)$ if $\rho(p,r,d)\geq 0$, and empty otherwise;
  \item[(ii)] if $d\geq r(r+1)$ and $-r(r+2) \leq \rho(p,r,d) <0$, then the Brill-Noether locus $|L|^r_d$ has an irreducible component $Z$ of the expected dimension $p-2+\rho(p,r,d)$; furthermore, $Z$ can be chosen so that, if $C\in Z$ is general, then $G^r_d(C)$ has some $0$-dimensional components parametrizing linear series that define birational maps to $\mathbb{P}^r$ as soon as $r\geq 2$;
\item[(iii)] the locus $|L|^r_d$ is empty if $\rho(p,r,d)<-r(r+2)$.
\end{itemize}
\end{thm}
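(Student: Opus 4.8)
The unifying tool is the Lazarsfeld--Mukai construction adapted to the abelian surface $S$. Given a smooth $C\in|L|$ and a base-point-free $g^r_d$ cut out by a line bundle $A$ on $C$, I would form the rank-$(r+1)$ bundle $E=E_{C,A}$ obtained by dualizing
\[ 0\to E\v\to H^0(C,A)\*\O_S\to A\to 0, \]
$A$ being regarded as a sheaf on $S$. A Chern class computation gives $c_1(E)=L$ and $c_2(E)=d$, so $E$ has Mukai vector $v=(r+1,\,L,\,(p-1)-d)$; since $\operatorname{td}(S)=1$, Riemann--Roch yields $\chi(\End E)=-\langle v,v\rangle=2r(p-1)-2(r+1)d$. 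Expanding $\rho=\rho(p,r,d)$ one obtains the key identity
\[ \chi(\End E)=-2\rho-2r(r+2), \qquad\text{i.e.}\qquad \langle v,v\rangle=2\rho+2r(r+2), \]
and, as $K_S=\O_S$, Serre duality gives $\ext^2(E,E)=\hom(E,E)$, hence
\[ \ext^1(E,E)=2\,\hom(E,E)+2\rho+2r(r+2). \]
All three parts follow from the interplay between this identity and the stability of $E$.

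For (iii), suppose a smooth $C\in|L|$ carried a $g^r_d$ with $\rho<-r(r+2)$; after removing base points I may assume $A$ base-point-free, which only lowers $\rho$, so the strict inequality persists. If $E=E_{C,A}$ were simple, then $\hom(E,E)=1$ and the identity above would force $\ext^1(E,E)=2\bigl(1+\rho+r(r+2)\bigr)<0$, which is absurd. Thus $E$ must be non-simple, and the crux is to rule this out: I would show that a nontrivial endomorphism, or a destabilizing subsheaf, produces a saturated sub-line-bundle whose class $M$ satisfies $0<M\cdot L<L^2$, and then combine the Hodge index theorem on $S$ with the genericity hypothesis $\NS(S)=\ZZ L$ to reach a numerical contradiction. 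This is precisely the step at which the abelian case diverges from the K3 case, and I expect it to be the main obstacle.

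For (ii), I would reverse the construction. In the range $-r(r+2)\le\rho<0$ one has $\langle v,v\rangle\ge-2$, so the moduli space of stable sheaves on $S$ with Mukai vector $v$ is nonempty of dimension $\langle v,v\rangle+2$; the hypothesis $d\ge r(r+1)$ is what I would exploit to guarantee that a general such $E$ is both stable and globally generated with $h^0(E)\ge r+1$. Choosing $r+1$ general sections yields a morphism $\O_S^{\+(r+1)}\to E$ whose first degeneracy locus is a curve $C$ with $[C]=c_1(E)=L$, and the cokernel of the restricted map is a line bundle $A$ defining a $g^r_d$ on $C$. A general-position argument on $H^0(E)$ should show that $C$ is integral with at worst nodes and that $A$ induces a birational map to $\PP^r$ once $r\ge2$. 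Parametrizing the pairs $(C,A)$ thus produced and comparing the dimension of this family with the fibres of $(C,A)\mapsto C$ then exhibits a component $Z\subseteq|L|^r_d$ of the expected dimension $p-2+\rho$, along which $G^r_d(C)$ acquires the claimed $0$-dimensional components.

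Part (i) is then a dimension count. For any smooth curve the existence theorem of Kempf and Kleiman--Laksov gives nonemptiness and $\dim G^r_d(C)\ge\rho$ whenever $\rho\ge0$, so it suffices to prove the reverse bound $\dim G^r_d(C)\le\rho$ for general $C\in|L|$; this follows from a dimension estimate for the family of all pairs $(C,A)$, obtained by sending each pair to its bundle $E_{C,A}$ in the moduli space of sheaves and bounding the fibres, using that $E$ is stable for general $(C,A)$. When $\rho<0$ the same estimate shows that this family has dimension strictly less than $\dim|L|=p-2$, hence does not dominate $|L|$, so a general curve carries no such series and is Brill--Noether general. The recurring difficulty in every step is the irregularity of $S$: since $H^1(\O_S)\ne0$, the vanishing and simplicity statements that are automatic on a K3 surface fail here, the admissible range of $\rho$ stretches down to $-r(r+2)$, and each cohomological estimate must be supplemented by control of the two-dimensional families of translations and of $\Pic^0(S)$-twists acting on the bundles $E_{C,A}$.
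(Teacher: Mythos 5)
Your part (iii) is essentially the paper's own argument (Theorem \ref{thm:necex}): form $E_{C,A}$, prove it is simple, and extract a numerical inequality. Two repairs are needed there. The simplicity of $E_{C,A}$, which you correctly flag as the crux, is exactly what the paper establishes via Lemma \ref{lemma:fact}: a nontrivial endomorphism forces $c_1(E)=[C]$ to decompose into two effective nonzero classes, impossible when $[C]$ has no such decomposition; your Hodge-index sketch is in the same spirit but is left unproven. More importantly, your numerical step is off by one: from simplicity and $\ext^1(E,E)\geq 0$ you only get $\rho\geq -r(r+2)-1$, and at $\rho=-r(r+2)-1$ your displayed quantity equals $0$ rather than being negative. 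You need $h^1(E\otimes E^{\vee})\geq 2$, which holds on an abelian surface because $H^1(\O_S)\cong\CC^2$ injects into $H^1$ of the endomorphism bundle via the trace splitting; this is precisely the refinement the paper invokes to land on $\rho\geq -r(r+2)$.

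The serious gap is in (ii), where your route diverges completely from the paper's (which degenerates to a semiabelian surface, then to an elliptic curve with $p-1$ cusps, and produces the linear series via the Eisenbud--Harris theory of linear series with prescribed ramification combined with Piene's duality). Reversing the Lazarsfeld--Mukai construction requires a stable bundle $E$ with Mukai vector $v=(r+1,L,p-1-d)$ that is globally generated with $h^0(E)\geq r+1$. But $\chi(E)=p-1-d$, and in a substantial part of the range covered by (ii) one has $p-1-d\leq r$, so $h^1(E)\geq r+2+d-p>0$ is forced on any such $E$; a general point of the moduli space $M(v)$ has no reason to carry these extra sections, and you give no mechanism by which the hypothesis $d\geq r(r+1)$ produces them. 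That this is a genuine obstruction and not a technicality is shown by the paper's Examples \ref{cex1}--\ref{cex2}: for $(r,d,p)=(2,5,7)$ the moduli space $M(3,L,1)$ is nonempty of dimension $\langle v,v\rangle+2=8$, yet $|L|^2_5=\emptyset$. So nonemptiness of $M(v)$ plus general position cannot suffice; any sheaf-theoretic proof must see the arithmetic of $d\geq r(r+1)$, and yours does not. (Incidentally, your threshold $\langle v,v\rangle\geq -2$ is a $K3$ reflex: on an abelian surface simple sheaves satisfy $\ext^1\geq 2$, so the correct threshold is $\langle v,v\rangle\geq 0$.) The same issues leave your part (i) incomplete: the upper bound via $(C,A)\mapsto E_{C,A}$ requires the fibre count over $M(v)$ to be carried out with the two-dimensional translation/$\Pic^0$ redundancy and the possible nonvanishing of $h^1(E)$ taken into account --- the naive count gives $\dim G^r_d(C)\leq\rho+2$, not $\rho$ --- whereas the paper proves (i) by degeneration to cuspidal elliptic curves, bypassing these difficulties entirely.
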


In particular, this proves that a general curve in the linear system is Brill-Noether general, in analogy with Lazarsfeld's famous result. However, contrary to the $K3$ case, there are 
subloci in $|L|$ which parametrize curves carrying linear series with negative Brill-Noether number. 
Paris \cite{Pa} had already  proved, by completely different methods (Fourier-Mukai transforms), parts (i), under the additional technical hypothesis $d \neq p-1$, and (iii)
(however, we obtain (iii) even for torsion free sheaves on singular curves, cf. Theorem \ref{thm:necex}, and a stronger bound in Theorem \ref{thm:necexforte}). We believe that the totally new result  (ii) concerning negative Brill-Noether numbers is the most striking. First of all, it prevents both the gonality and the Clifford index of smooth curves in $|L|$ from being constant (cf. Remark \ref{rem:nonconst}); this is a major difference between the abelian and the $K3$ world. Secondly and most importantly, it  has quite a strong implication for the geometry of the Brill-Noether loci 
$\M^r_{p,d}$ in the moduli space of smooth irreducible genus $p$ curves $\M_p$. In stating it, we let $\G^r_d$ be the scheme parametrizing pairs $([C], \gg)$ with $[C]\in \M_p$ and $\gg\in G^r_d(C)$, and denote by $\pi: \G^r_d\to \M_p$ the natural projection. 

\begin{thm} \label{thm:main5}
If $d\geq r(r+1)$ and $-r(r+2)\leq\rho(p,r,d)<0$, then the Brill-Noether locus $\M^r_{p,d}$ has an irreducible component $\M$ of the expected dimension $3p-3+\rho(p,r,d)$. Furthermore, $\M$ coincides with the image under $\pi$ of an irreducible component $\G$ of $\G^r_d$ of the same dimension as $\M$ such that, if $(C, \gg)\in \G$ is general and $r\geq 2$, then $\gg$ defines a birational map to $\mathbb{P}^r$.
\end{thm}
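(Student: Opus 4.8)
The plan is to feed the curves produced in Theorem~\ref{thm:main4}(ii) into $\G^r_d$ and to control the local dimension of $\G^r_d$ at the resulting points by a Petri-type deformation argument. Fix a general $(1,n)$-polarized $(S,L)$ and let $Z\subset |L|^r_d$ be the component furnished by Theorem~\ref{thm:main4}(ii); for a general $C\in Z$ pick one of the $0$-dimensional components $\gg=(A,V)$ of $G^r_d(C)$, which for $r\geq 2$ defines a birational map to $\PP^r$. Since $\rho:=\rho(p,r,d)<0$, such an isolated $\gg$ must be complete (a noncomplete series would move in a Grassmannian), so $V=H^0(C,A)$; moreover the construction produces reduced such points, equivalently $\gg$ is complete with surjective Petri map $\mu_0\colon H^0(C,A)\otimes H^0(C,K_C\otimes A\i)\to H^0(C,K_C)$, so that $\dim\ker\mu_0=-\rho$. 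Each pair gives a point $(C,\gg)\in\G^r_d$; I let $\G$ be an irreducible component through it and set $\M:=\pi(\G)$.

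Two of the required assertions come almost for free. First, by the general lower bound for the relative Brill--Noether variety, every component of $\G^r_d$ has dimension at least $3p-3+\rho$, so $\dim\G\geq 3p-3+\rho$. Second, surjectivity of $\mu_0$ gives $\dim T_\gg G^r_d(C)=\rho+\dim\ker\mu_0=0$; as $G^r_d(C)$ is the fibre of $\pi$ over $[C]$, the differential $d\pi$ is injective at $(C,\gg)$, so $\pi|_\G$ is generically finite and $\dim\M=\dim\G$. Because birationality to $\PP^r$ is an open condition on $\G$ that holds at $(C,\gg)$, a general member of $\G$ is still birational onto its image when $r\geq 2$.

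The crux is the matching upper bound $\dim_{(C,\gg)}\G^r_d\leq 3p-3+\rho$. The difficulty is that the tangent space is genuinely too large: from $0\to T_\gg G^r_d(C)\to T_{(C,\gg)}\G^r_d\xrightarrow{d\pi} H^1(C,T_C)$ one finds $\dim T_{(C,\gg)}\G^r_d=\dim\im(d\pi)=3p-3-\rk(o)$, where $o\colon H^1(C,T_C)\to\Hom(\ker\mu_0,H^1(C,\O_C))$, $\,o(\xi)\bigl(\sum_j s_j\otimes t_j\bigr)=\sum_j(\xi\cup s_j)\,t_j$, is the obstruction measuring which first-order deformations of $C$ preserve the Petri relations of $\gg$. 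The lower bound already forces $\rk(o)\leq -\rho$, and the entire theorem reduces to the reverse inequality $\rk(o)\geq -\rho$; equivalently, the $-\rho$ Petri relations of $\gg$ must impose independent conditions on $H^1(C,T_C)$, a second-order statement invisible to the Zariski tangent space. This is precisely where the abelian surface has to intervene. Since $K_S=\O_S$ one has $K_C=L|_C$ and $2K_C=(2L)|_C$, and as ample line bundles on an abelian surface have no higher cohomology, $H^1(S,L)=H^1(S,2L)=0$; hence $H^0(C,K_C)$ and $H^0(C,2K_C)$ are controlled by $H^0(S,L)$ and $H^0(S,2L)$. The plan is to lift the multiplications underlying $o$ (equivalently the Gaussian-type conormal map $\mu_1\colon\ker\mu_0\to H^0(C,2K_C)$) to $S$ and, using the explicit shape of $A$ coming from the construction in Theorem~\ref{thm:main4}(ii), to prove $\mu_1$ injective. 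I expect this injectivity to be the main obstacle: it is a genuinely second-order, surface-dependent computation with no counterpart in the $K3$ theory, where $\rho$ is always nonnegative.

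Granting $\rk(o)=-\rho$, the tangent space $T_{(C,\gg)}\G^r_d$ has dimension $3p-3+\rho$, matching the lower bound; hence $(C,\gg)$ is a smooth point of $\G^r_d$, the component $\G$ has dimension exactly $3p-3+\rho$, and $\dim\M=\dim\G=3p-3+\rho$. Since the conditions ``isolated, Petri-surjective $\gg$'' and ``birational for $r\geq 2$'' are open on $\G$, a general $(C',\gg')\in\G$ is again a reduced isolated point of its Brill--Noether fibre at which $d\pi$ is injective, so $\pi$ is a local isomorphism onto its image there; thus $\M^r_{p,d}$ is smooth of dimension $3p-3+\rho$ near a general point of $\M$, whence $\M$ is an irreducible component of $\M^r_{p,d}$ of the expected dimension $3p-3+\rho$, equal to $\pi(\G)$ with a general $\gg$ birational onto its image for $r\geq 2$. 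This is exactly the assertion of the theorem.
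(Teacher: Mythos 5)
Your reduction of the theorem to the inequality $\rk(o)\geq -\rho$, equivalently to the injectivity of the Gaussian $\mu_1$ on $\ker\mu_0$, is exactly where the proposal stops being a proof: you state yourself that this is ``the main obstacle'' and offer only a plan (``lift the multiplications to $S$ using $K_S=\O_S$'') rather than an argument. Nothing in the construction of Theorem~\ref{thm:main4}(ii) gives you this second-order statement, and it is not a formal consequence of anything you have set up. A secondary but real problem is your claim that the isolated $\gg$ produced by Theorem~\ref{thm:main4}(ii) is complete with \emph{surjective} Petri map: the theorem only asserts that $G^r_d(C)$ has some $0$-dimensional components consisting of birational series, and a $0$-dimensional component need not be a reduced point, so $\dim T_\gg G^r_d(C)=0$ (hence injectivity of $d\pi$ at $(C,\gg)$) is not justified. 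Both gaps sit precisely at the steps your argument cannot do without.

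The paper's proof avoids all of this deformation theory. It takes the component $Z\subset|L|^r_d$ of expected dimension $p-2+\rho$ from Theorem~\ref{thm:main4}(ii), lets $\M$ be a component of $\M^r_{p,d}$ with $\psi(Z)$ a component of $\M\cap\psi(|L|)$, and runs the chain of inequalities \eqref{muffin}: $\dim Z\geq\dim\psi(|L|_s)-\codim_{\M_p}\M\geq\dim\psi(|L|_s)+\rho$, where the last step is Steffen's bound $\codim_{\M_p}\M^r_{p,d}\leq-\rho$ from \cite{St}. Since $\dim Z$ equals the right-hand end exactly, every inequality is an equality, forcing $\codim\M=-\rho$, i.e.\ $\dim\M=3p-3+\rho$; the component $\G$ of $\G^r_d$ and the birationality statement then come from Proposition~\ref{IUIA}, Lemma~\ref{neve} and Theorem~\ref{zurigo} via openness, much as in your last paragraph. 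If you want to salvage your route, you would need to actually prove the injectivity of $\mu_1$ (and the Petri surjectivity), which is a substantially harder problem than the theorem itself; the intended argument replaces it entirely by the citation of Steffen's general principal ideal theorem.
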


Steffen \cite{St} proved that, as soon as $\rho(p,r,d)<0$, the codimension in $\M_p$ of any component of $\M^r_{p,d}$ is bounded from above by $-\rho(p,r,d)$. However, the problem concerning nonemptiness of $\M^r_{p,d}$ and existence of components having the expected dimension is highly nontrivial and has a complete answer only for $r=1,2$. For arbitrary $r$ the picture is well-understood only if $\rho(p,r,d)=-1,-2$: in the former case $\M^r_{p,d}$ is an irreducible divisor \cite{EH4} and in the latter case every component of $\M^r_{p,d}$ has codimension $2$ \cite{Ed}. For very negative values of $\rho(p,r,d)$, there are plenty of examples of Brill-Nother loci exceeding the expected dimension, e.g., constructed by considering multiples of linear series or complete intersection curves. On the other hand, for slightly negative values of $\rho(p,r,d)$, Steffen's dimensional estimate is still expected to be effective \cite{EH4}. 
Sernesi \cite{Se}, followed by other authors, most recently Pflueger \cite{Pf}, proved the existence of components of $\M^r_{p,d}$ of the expected dimension under certain assumptions on $p,r,d$. However, (infinitely) many components detected by our Theorem \ref{thm:main5} were heretofore unknown (cf. Remark \ref{confronto}).

We spend a few more words on Theorem \ref{thm:main4}.
In the range $r < p-1-d$, the result is optimal, in the sense that (ii) and (iii) yield that $|L|^r_d \neq \emptyset$ if and only if $\rho(p,r,d) \geq -r(r+2)$. Indeed, the inequality $\rho(p,r,d)\geq -r(r+2)$ implies the condition 
$d \geq r(r+1)$ in (ii) in precisely this range. 
On the other hand, for $r \geq p-1-d$, the situation is more complicated and, as soon as $r\geq 2$, the condition $\rho(p,r,d) \geq -r(r+2)$ is no longer sufficient for nonemptiness of $|L|^r_d$ (cf. Examples \ref{cex1}--\ref{cex2}). In fact, we prove a stronger necessary condition in the Appendix (cf. Theorem \ref{thm:necexforte}). 

For $r=1$, Theorem \ref{thm:main4} is clearly optimal and can be generalized to {\it nodal} curves. Let $|L|^1_{\delta,k}$ be the Brill-Noether locus parametrizing nodal curves $C\in V_{|L|,\delta}(S)$ such that the normalization of $C$ carries a $g^1_k$. We prove the following:

\begin{thm} \label{thm:main2}
 Let $(S,L)$ be a general  polarized abelian surface with $L$ of type $(1,n)$. Let $\delta$ and $k$ be integers satisfying $0 \leq \delta \leq p-2=n-1$ and $k \geq 2$, and set $g:=p-\delta$.  Then the following hold:
  \begin{itemize}
\item [(i)] $|L|^1_{\delta,k}(S) \neq \emptyset$ if and only if 
\begin{equation} \label{eq:boundintro}
\delta \geq \alpha\Big(p-\delta-1-k(\alpha+1)\Big), \; \; \mbox{with} \; \; \alpha= \Big\lfloor \frac{g-1}{2k}\Big\rfloor; 
\end{equation}
\item [(ii)]  whenever nonempty, $|L|^1_{\delta,k}$ is equidimensional of dimension  $\min\{g-2,2(k-2)\}$ and a  general element in each component is an irreducible curve $C$ with normalization $\widetilde{C}$ of genus $g$ such that $\dim G^1_k(\widetilde{C})=\max\{0,\rho(g,1,k)=2(k-1)-g\}$;
\item [(iii)]  there is at least one component of $|L|^1_{\delta,k}$ where, for $C$ and $\widetilde{C}$ as in (ii), the Brill-Noether variety $G^1_k(\widetilde{C})$ is reduced; furthermore, when $g \geq 2(k-1)$ (respectively $g < 2(k-1)$), any (resp. a general) $g^1_k$ on $\widetilde{C}$ has simple ramification and all nodes of $C$ are non-neutral with respect to it.
\end{itemize}
\end{thm}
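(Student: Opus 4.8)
The plan is to split the statement into a \emph{necessity} part (the ``only if'' in (i), together with the upper bounds on $\dim|L|^1_{\delta,k}$) and a \emph{sufficiency} part (the ``if'' in (i), the matching lower bound, and the genericity assertions (iii)). Throughout I write $\nu\colon\widetilde{C}\to C\subset S$ for the normalization, so that $\nu^*L\cong\omega_{\widetilde C}(N)$ with $N$ the reduced divisor of the $2\delta$ branches lying over the $\delta$ nodes, and I fix a line bundle $A$ on $\widetilde C$ with $|A|$ a base-point-free $g^1_k$ (discarding base points only decreases $k$ and strengthens every inequality), giving $\phi_A\colon\widetilde C\to\PP^1$. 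The numerical backbone, which I verify first, is that $f(\alpha):=\alpha\big(g-1-k(\alpha+1)\big)$ is a downward parabola maximized at $\alpha^\ast=(g-1-k)/(2k)$, with integral maximizer exactly $\alpha_0=\lfloor(g-1)/(2k)\rfloor$; thus the bound \eqref{eq:boundintro} reads $\delta\geq\max_\alpha f(\alpha)$.

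For necessity I would argue uniformly in $\alpha$. The base-point-free pencil trick gives $h^0(\widetilde C,\alpha A)\geq\alpha+1$, and pushing forward produces a rank-one torsion-free sheaf $\nu_*(\alpha A)$ on $C$ with $h^0(\nu_*(\alpha A))=h^0(\alpha A)\geq\alpha+1$ and $\deg\nu_*(\alpha A)=\alpha k+\delta$ (the degree jumps by $\delta$ under pushforward). Hence $C$ carries a torsion-free $g^\alpha_{\alpha k+\delta}$, and the emptiness estimate of Theorem~\ref{thm:main4}(iii) in its singular, torsion-free form (Theorem~\ref{thm:necex}) forces $\rho(p,\alpha,\alpha k+\delta)\geq-\alpha(\alpha+2)$. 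A direct computation with $p=g+\delta$ rewrites this as $\delta\geq f(\alpha)$; optimizing over $\alpha$ gives the binding inequality at $\alpha=\alpha_0$, which is \eqref{eq:boundintro}. The dimension upper bounds then split: $\dim|L|^1_{\delta,k}\leq g-2$ is immediate from $|L|^1_{\delta,k}\subseteq V_{|L|,\delta}(S)$ and Theorem~\ref{thm:main1}, whereas $\dim|L|^1_{\delta,k}\leq2(k-2)$ is equivalent to the assertion that $k$-gonality imposes codimension at least $-\rho(g,1,k)=g-2k+2$ inside $V_{|L|,\delta}(S)$. To prove the latter I would realize each pair $(C,A)$ as the smooth curve $\Gamma=\overline{(\phi_A\times\nu)(\widetilde C)}\subset X:=\PP^1\times S$, note that $-K_X\cdot\Gamma=2k$, and estimate the family of such $\Gamma$ through $\chi(N_{\Gamma/X})$, correcting for $\Aut(\PP^1)$ and the two-dimensional translation action of $S$; the irregularity of $S$ enters precisely here and must be controlled carefully.

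For sufficiency I would produce, for $\delta$ at the lower extreme of \eqref{eq:boundintro}, a single $\delta$-nodal $C_0$ with the desired properties on a conveniently degenerate $(1,n)$-polarized surface, and then spread out to the general $S$. One route reverses the necessity computation: when $\delta=f(\alpha_0)$ one has $\rho(p,\alpha_0,\alpha_0k+\delta)=-\alpha_0(\alpha_0+2)$, exactly the boundary case in which Theorem~\ref{thm:main4}(ii) supplies a smooth curve in $|L|$ with such a special series, which for $\alpha_0=1$ degenerates equigenerically to a $\delta$-nodal curve whose normalization is $k$-gonal (so that the series becomes $\nu_*(A)$); for $\alpha_0\geq2$ a direct construction—e.g.\ specializing $S$ to a surface isogenous to a product, whose elliptic fibrations induce low-degree pencils on curves in $|L|$, and attaching the nodes by a limit-linear-series argument—seems unavoidable. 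In either case the smoothness of the Severi variety (Theorem~\ref{thm:main1}) lets me deform the model, keeping the $\delta$ nodes and the pencil, into a component of $|L|^1_{\delta,k}$. Finally, a standard determinantal (degeneracy-locus) lower bound shows that every component of $|L|^1_{\delta,k}$, realized as a relative $W^1_k$ over the smooth base $V_{|L|,\delta}(S)$, has dimension $\geq\min\{g-2,2(k-2)\}$; combined with the previous paragraph this yields equidimensionality of the exact expected dimension, while $\dim G^1_k(\widetilde C)=\max\{0,\rho(g,1,k)\}$ for a general member follows from semicontinuity along the constructed family.

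For (iii) I would verify genericity on one good member and propagate by openness: reducedness of $G^1_k(\widetilde C)$ amounts to injectivity of the Petri map $H^0(A)\otimes H^0(\omega_{\widetilde C}\otimes A^{-1})\to H^0(\omega_{\widetilde C})$, simple ramification of the $g^1_k$ is the generic condition in the relevant Hurwitz space, and non-neutrality of the nodes is the requirement that the two branches over each node map to distinct points of $\PP^1$ under $\phi_A$ and impose independent conditions on $A$; all three are open and can be arranged in the degenerate model. \textbf{The main obstacle is the sufficiency paragraph.} One must exhibit a degenerate example that simultaneously attains the extremal node number $f(\alpha_0)$, has $k$-gonal normalization with reduced $G^1_k$ and non-neutral nodes, \emph{and} deforms to a general $(1,n)$-polarized abelian surface preserving all of this structure. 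The difficulty is sharpened by the irregularity $q(S)=2$, which removes the Lazarsfeld--Mukai vector-bundle techniques available on $K3$ surfaces and shifts the naive deformation counts by the dimension of the translation action; identifying the correct obstruction spaces so that the constructed family has the expected dimension and dominates the moduli of $(1,n)$-polarized surfaces is the crux.
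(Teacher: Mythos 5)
Your necessity argument for (i) is essentially the paper's own (Theorem \ref{thm:noexist}): push forward the powers $A^{\otimes\alpha}$ to torsion free sheaves of degree $\alpha k+\delta$ with $h^0\geq\alpha+1$, apply Theorem \ref{thm:necex}, and optimize the resulting quadratic inequality in $\alpha$; that part is correct, including the identification of the optimal $\alpha$. The rest of the proposal has genuine gaps. First, the upper bound $\dim V+\dim G^1_k(\widetilde C)\leq 2k-2$ cannot be extracted from $\chi(N_{\Gamma/X})$ on $X=\PP^1\times S$: an Euler characteristic computation gives a \emph{lower} bound on $h^0(N_{\Gamma/X})$, whereas what is needed is an upper bound, i.e.\ control of $h^1(N_{\Gamma/X})$, for which you offer nothing -- and this is exactly where the irregularity of $S$ bites. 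The paper proves this bound (Theorem \ref{thm:boundingdim}) by a completely different mechanism: the pencils produce a family of rational curves in the generalized Kummer variety $K^{[k-1]}(S)$ of dimension $\dim V-2+\dim G^1_k(\widetilde C)$; Ran's result bounds this family below by $2k-4$, and Mori's bend-and-break together with the Amerik--Verbitsky argument bounds it above by $2k-4$, using that $[L]$ has no decomposition into effective classes so that no curve in $\{L\}$ can be reducible or nonreduced. Without a substitute for this, part (ii) on \emph{all} components is unproved.

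Second, and more seriously, the sufficiency half of (i) and all of (iii) rest on a construction you do not carry out. Invoking Theorem \ref{thm:main4}(ii) is close to circular (its proof in the paper is itself a degeneration at least as delicate as the one needed here), and the passage from a smooth curve carrying a special $g^{\alpha_0}_{\alpha_0k+\delta}$ to a $\delta$-nodal curve whose normalization carries a $g^1_k$ is not justified: an equigeneric degeneration neither preserves nor creates such a pencil. Moreover, producing only the extremal case $\delta=f(\alpha_0)$ would not suffice, since smoothing a node changes $g$ and hence the bound \eqref{eq:boundintro}; every admissible pair $(\delta,k)$ needs its own model. The paper's construction degenerates to the semiabelian surface $S_0$ of Proposition \ref{frittata}: the limit curve is built from the elliptic curve $E$ together with a pencil $\mathfrak{g}\in G^1_k(E)$, the $g-1$ identified pairs being chosen among the $2k$ intersection points of the curve $C_{\mathfrak{g}}\subset\Sym^2(E)$ with each curve $\mathfrak{c}_{e,l+1}$; the resulting constraint $\alpha_l\leq 2k$, combined with the purely combinatorial Lemma \ref{lemma:trovainteri}, is precisely what converts \eqref{eq:boundintro} into an existence statement, and the theory of admissible covers on the degenerate curve then yields the reducedness, simple ramification and non-neutrality of the nodes claimed in (iii). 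None of this is replaced by the sketch of specializing to a surface isogenous to a product.
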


This means that, for fixed $\delta \geq 0$, the gonality of the normalization of a general curve in $V_{|L|,\delta}(S)$ is that of a general genus $g$ curve, i.e.,  $\lfloor(g+3)/2\rfloor$. However, for all $k$ satisfying \eqref{eq:boundintro},  there are $2(k-1)$-dimensional subloci in $V_{|L|,\delta}(S)$ of curves whose normalization has lower gonality $k$. Parts (ii) and (iii) of  Theorem \ref {thm:main2} 
imply that $\widetilde{C}$ enjoys properties of a general curve of gonality $\min\{k,\lfloor(g+3)/2\rfloor\}$
with respect to pencils. 

Following ideas of \cite{CK}, Theorem \ref{thm:main2} will be used in \cite{KLM} in order to construct rational curves in the {\it generalized Kummer variety} $K^{[k-1]}(S)$, and item (iii) will be relevant in this setting, e.g., in the computation of the class of the rational curves. By (ii), these curves will move in a family of dimension precisely $2k-4$. This is the expected dimension of any family of rational curves on a $(2k-2)$-dimensional hyperk\"ahler manifold  \cite{ran},  whence (cf. %\cite[Prop. 3.1]{cp}, 
\cite[Pf. of Cor. 4.8]{AV}) the constructed families of rational curves deform to a general projective $(2k-2)$-dimensional hyperk\"ahler manifold deformation equivalent to a generalized Kummer variety and are therefore of particular interest. 

Finally, we remark that the dimensional statement in (ii) extends to curves of geometric genus $g$ in $|L|$ with arbitrary singularities (cf. Theorem \ref{thm:boundingdim}), as well as to $K3$ surfaces (cf. Remark \ref{rem:ancheK3}).

\subsection{Methods of proof}

Most results are proved by degeneration 
to a $(1,n)$-polarized semiabelian surface $(S_0,L_0)$, which is constructed starting with a ruled surface $R$ over an elliptic curve $E$ by identifying two sections $\sigma_\infty$ and $\sigma_0$ with a translation by a fixed nontorsion point $e\in E$, as in \cite{HW}.

The proof of  Theorem \ref{thm:main1} relies on the construction of curves in $S_0$ that are limits of $\delta$-nodal curves on smooth abelian surfaces, which becomes a simple combinatorial problem. 

Concerning Theorems \ref{thm:main4} and \ref{thm:main2}, the statements yielding necessary conditions for nonemptiness of the Brill-Noether loci are based on variations of vector bundle techniques \`a la Lazarsfeld and follow from a more general result providing necessary conditions for the existence of torsion free sheaves on curves on abelian surfaces (cf. Theorem \ref{thm:necex} and its stronger version Theorem \ref{thm:necexforte} in the Appendix).

The remaining parts of Theorems \ref{thm:main4} and \ref{thm:main2} (except for the fact that item (ii) in the latter  holds on {\it every} component of $|L|^1_{\delta,k}$) are again proved by degeneration to $S_0$.  For $\delta=0$, the limit curves in $S_0$ are $n$-nodal curves $X\in |L_0|$ obtained from the elliptic curve $E$ along with $n=p-1$ points $P_1,\ldots, P_{n}\in E$, by identifying each $P_i$ with its $e$-translate. In order to prove Theorem \ref{thm:main2} for $\delta=0$ by degeneration, one has to determine whether such an $X$ lies in $\overline{\M}^1_{p,k}$; the theory of admissible covers  translates the problem into a question about the existence of a $g^1_k$ on $E$ identifying any pair of points corresponding to a node of $X$. This can be easily answered using intersection theory on the ruled surface $\Sym^2(E)$. The situation for $\delta >0$ is slightly more involved combinatorially, but the idea is the same. 

The proof of Theorem \ref{thm:main4} is more demanding. A first obstacle in understanding whether a limit curve $X\subset S_0$ as above defines a point of $\overline{\M}^r_{p,d}$ lies in the fact that linear series of type $g^r_d$ on smooth curves might tend to torsion free (not necessarily locally free) sheaves on $X$. Furthermore, even determining the existence of degree $d$ line bundles on $X$ with enough sections is hard. In order to cope with this problem, we further degenerate $X$ (forgetting the surface) to a curve $X_0$ with $p-1$ cusps, that is, we let $e$ approach the zero element of $E$. Line bundles on $X_0$ then correspond to linear series on $E$ having at least cuspidal ramification at $n$ points $P_1,\ldots, P_n$. The Brill-Noether Theorem with prescribed ramification \cite{EH1,EH2} at some general points then yields (i). In order to obtain (ii), we prove the existence of a family (having the expected dimension) of birational maps $\phi:E\to \mathbb{P}^r$ such that the images $\phi(E)$ are nondegenerate curves of degree $d$ with  $p-1$ cusps (corresponding to special points of $E$ as $\rho(p,r,d)<0$).  This is done by resorting to Piene's duality \cite{Pi} for nondegenerate curves in $\mathbb{P}^r$: with any nondegenerate curve $Y\subset \mathbb{P}^r$ of normalization $f:\widetilde{Y}\to Y$ one associates a dual curve $Y^\vee\subset (\mathbb{P}^r)^\vee$ defined as the image of the dual map $f^\vee:\widetilde{Y}\to (\mathbb{P}^r)^\vee$, which sends a point $P\in \widetilde{Y}$ to the osculating hyperplane of $Y$ at $f(P)$ (for $r>2$ the dual curve should not be confused with the dual hypersurface, which is the closure of the image of the Gauss map). Thanks to the duality theorem \cite[Thm. 5.1]{Pi}, one may try to construct the dual map $\phi^*$ and then recover $\phi$. This strategy proves successful since cusps of $Y$ transform into ordinary ramification points of $Y^\vee$, hence the existence of $\phi^*$ can be more easily achieved than that of $\phi$.

The proof of Theorem \ref{thm:moduli} does not use degeneration, but specialization to the smallest Severi varieties (namely, those parametrizing curves of geometric genus $2$), where the result is almost trivial. Since these are contained in the closure of the Severi varieties parametrizing curves with fewer nodes, it is not hard to deduce the result in general. The  universal Severi variety over a suitable cover of the moduli space of $(1,n)$-polarized  abelian surfaces plays an important role.

Finally, Theorem \ref{thm:conjDS}, as well as Theorem \ref{thm:main2}(ii) for {\it all} components of the Brill-Noether loci, follow from a bound on the dimension of any family of curves with arbitrary singularities whose normalizations possess linear series of type $g^1_k$ (cf. Theorem \ref{thm:boundingdim}). This is obtained by bounding the corresponding family of rational curves in the generalized Kummer variety $K^{[k-1]}(S)$ through Mori's bend-and-break and recent results by Amerik and Verbitsky \cite{AV}. This is a nice sample of the rich interplay between the theory of abelian (and $K3$) surfaces and hyperk\"ahler manifolds.

\subsection{Plan of the paper}

In \S \ref{degenerazioni} we introduce the degeneration used in the proof of the main results. In \S \ref{severi}, Severi varieties and their degenerations are investigated and Theorem \ref{thm:main1} is proved. Theorem \ref{thm:moduli} is obtained in \S \ref{sec:modular}. The rest of the paper is devoted to Brill-Noether theory. More precisely, in \S \ref{sec:lser} we compute the expected dimension of the Brill-Noether loci $|L|^r_{\delta,d}$ (cf. Proposition \ref{prop:expdimvk}) and provide a necessary condition for their nonemptiness (cf. Theorem \ref{thm:necex}). Furthermore, we bound the dimension of any family of curves in $|L|$ with arbitrary singularities such that their normalizations possess linear series of type $g^1_k$ (cf. Theorem \ref{thm:boundingdim}); this proves Theorem \ref{thm:conjDS}, by Dedieu and Sernesi's result mentioned above, and Theorem \ref{thm:main2}(ii). The rest of the proof of Theorem  \ref{thm:main2} unfolds \S \ref{sec:sevdeg}, while Theorems \ref{thm:main4} and \ref{thm:main5} are finally accomplished in \S \ref{cuspidi}. A stronger necessary condition for nonemptiness of $|L|^r_d$ in the range $r \geq p-1-d\geq 0$ is obtained in Theorem \ref{thm:necexforte} in
the Appendix. Both the statement and its proof are somewhat technical and are based on the fact that in this range the analogues of Lazarsfeld-Mukai bundles on $K3$ surfaces are forced to have nonvanishing $H^1$. This phenomenon does not occur on $K3$ surfaces and highlights the additional complexity of abelian surfaces.

\section{The degenerate abelian surface $(S_0,L_0)$}\label{degenerazioni}
In this section we introduce the degenerate abelian surface used in the proof of the main results. This degeneration is studied in \cite{HW} and \cite{HKW} when $n$ is either $1$ or an odd prime; we will extend the results necessary for our purposes to any integer $n\geq 1$.

\begin{definition} \label{def:dege}
  A proper flat family of surfaces $f : \SS \to \DD$ over the disc $\DD$ will be called a {\it degeneration of $(1,n)$-polarized abelian surfaces} if
\begin{itemize}
\item[(i)] $\SS$ is smooth;
\item[(ii)] the fiber $S_t$ over any $t \neq 0$ is a smooth abelian surface;
\item[(iii)] the fiber $S_0$ 
over $0$ is an irreducible surface with simple normal crossing singularities;
\item[(iv)] there is a line bundle $\L$ on $\SS$ such that $\L_{|S_t}$ is a polarization of type $(1,n)$ for every $t \neq 0$.
\end{itemize}

The special fiber $S_0$ will be called a {\it degenerate $(1,n)$-polarized abelian surface} and the line bundle $L_0:=\L_{|S_0}$ a {\it limit of polarizations of type} $(1,n)$.
\end{definition}

We  fix notation. Given an elliptic curve $E$, we denote by $e_0$ the neutral element with respect to its group structure. To distinguish the sum in $E$ as a group from the sum of points in $E$ as divisors, we denote the former by $\+$ and the latter by $+$. The group law is defined in such a way that:
\begin{equation}\label{equation:law}
P+Q-e_0\sim P\+Q\,\,\,\textrm{ for all }P,Q\in E.
\end{equation}

We use the notation $e^{\oplus n}$, $n \in \ZZ$, $e \in E$, for the sum $e \+ \cdots \+e$ of $e$ repeated $n$ times, and $e^{\+ 0}=e_0$ by convention. We also write 
$e=e_1 \ominus e_2$ when $e_1=e \+ e_2$. Analogously, for any divisor $D=\sum n_iP_i$, $n_i \in \ZZ$, $P_i \in E$,  we define
$D \+ e:=\sum n_i(P_i \+ e)$.

\begin{proposition}\label{frittata}
Fix an integer $n\geq 1$ and let $E$ be the elliptic curve defined as
$E:=\CC/(\ZZ n+\ZZ \tau)$,
where $\tau$ lies in the upper complex half-plane $\mathbb{H}$. For a point $e\in E$, let $R$ denote the $\PP^1$-bundle $\PP(\O_E\oplus \N)$ over $E$, with $\N=\O_E(ne-ne_0)$. Then the surface $S_0$ obtained by gluing the section at infinity $\sigma_{\infty}$ and the zero-section $\sigma_0$ of $R$ with translation by $e\in E$ is a degenerate abelian surface. Furthermore, $S_0$ carries a line bundle $L_0$ which is the limit of polarizations of type $(1,n)$ and satisfies $\nu^*L_0\equiv \sigma_0+nF$, where $\nu:R\to S_0$ is the normalization map and $F$ denotes the numerical equivalence class of the fibers of $R$ over $E$.
\end{proposition}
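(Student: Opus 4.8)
The plan is to verify the four conditions in Definition \ref{def:dege} for the surface $S_0$ together with an explicit line bundle $L_0$, following the construction in \cite{HW} but now for arbitrary $n\geq 1$. First I would set up the geometry: the ruled surface $R=\PP(\O_E\oplus\N)$ with $\N=\O_E(ne-ne_0)$ has two distinguished sections, the zero-section $\sigma_0$ and the section at infinity $\sigma_\infty$, whose normal bundles are $\N$ and $\N\v$ respectively, so that $\sigma_0^2=\deg\N=0$ and $\sigma_\infty^2=-\deg\N=0$ (since $\N$ has degree $0$). This degree-$0$ condition is exactly what makes the gluing produce a surface that is a genuine limit of abelian surfaces rather than of some other class; it should be recorded carefully. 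The gluing identifies $\sigma_\infty$ with $\sigma_0$ via the translation-by-$e$ isomorphism $E\to E$, producing $S_0$ with normalization map $\nu:R\to S_0$, and the image $D:=\nu(\sigma_0)=\nu(\sigma_\infty)$ is the double curve, along which $S_0$ has simple normal crossings.

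Next I would establish the structural properties (i)–(iii) abstractly, producing a one-parameter family $f:\SS\to\DD$ whose general fiber is a smooth abelian surface and whose central fiber is $S_0$. The construction of such a smoothing is the technical heart and is precisely what \cite{HW} (and \cite{HKW} for $n$ prime) carry out; my job is to check that their argument only uses $\deg\N=0$ and the nontorsion hypothesis on $e$, not the primality of $n$. The nontorsion assumption on $e$ guarantees that the general fiber is a (non-isogenous-to-product, hence genuinely polarized) abelian surface and that the total space $\SS$ can be taken smooth; the simple-normal-crossing condition on $S_0$ follows because the gluing is transverse along $D$. I would verify smoothness of $\SS$ by a local analytic computation near $D$, exhibiting the standard node-smoothing local model $xy=t$ in suitable coordinates.

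The core computation I actually want to nail down is the claim about the polarization. The plan is to define $L_0$ by descent: a line bundle on $S_0$ is the datum of a line bundle $M$ on $R$ together with a gluing isomorphism between the restrictions $M_{|\sigma_0}$ and $M_{|\sigma_\infty}$ compatible with the translation by $e$. I would take $M:=\O_R(\sigma_0+nF)$ on $R$, compute its restrictions to the two sections, and check that a gluing isomorphism exists — this is where the specific choice $\N=\O_E(ne-ne_0)$ and the translation amount $e$ conspire, via the group law \eqref{equation:law}, to make $M_{|\sigma_\infty}\+e\cong M_{|\sigma_0}$ so that descent data exist. Concretely, $\O_R(\sigma_0)_{|\sigma_\infty}$ and $\O_R(\sigma_0)_{|\sigma_0}$ differ by $\N^{\pm 1}=\O_E(\pm(ne-ne_0))$, and the $nF$ contributes $\O_E(nP)$-type terms, and the translation by $e$ is exactly the shift needed to match these degree-$0$ line bundles on $E$; once the degrees and the points agree under $\+$, a nonzero gluing isomorphism exists and defines $L_0$ with $\nu^*L_0\equiv\sigma_0+nF$. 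Finally I would confirm that $L_0$ extends to a line bundle $\L$ on $\SS$ restricting to a type-$(1,n)$ polarization on each $S_t$, by computing $L_0\cdot D=(\sigma_0+nF)\cdot\sigma_0=0+n=n$ together with the self-intersection $L_0^2=2n$ via $(\sigma_0+nF)^2=\sigma_0^2+2n\,\sigma_0\cdot F=2n$, which matches the invariants of a $(1,n)$-polarization (self-intersection $2n$, elementary divisors $(1,n)$); the vanishing $\sigma_0^2=0$ is again what forces the $(1,n)$ type rather than a different one.

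The main obstacle I anticipate is the descent/gluing step for $L_0$: constructing an honest gluing isomorphism of the restricted line bundles over the translation $E\xrightarrow{\oplus e}E$, rather than merely checking that the two line bundles on $E$ are isomorphic, and verifying this is compatible with the analogous gluing used to build $\SS$ so that $L_0$ genuinely deforms to a $(1,n)$-polarization. Equivalently, the delicate point is bookkeeping the $\+$-arithmetic in \eqref{equation:law} to see that $\N=\O_E(ne-ne_0)$ is forced by the requirement that translation by $e$ carry the relevant degree-$0$ bundle to itself; getting the sign and the multiple $n$ exactly right is where the argument of \cite{HW} must be re-examined for general $n$, and is the step I would write out in full.
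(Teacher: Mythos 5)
Your plan diverges from the paper's proof in a way that leaves the technical heart unproved. The paper does not re-verify the Hulek--Weintraub construction for general $n$; instead it reduces to the principally polarized case, which is already available in \cite{HKW}: one takes the degenerate \emph{principally} polarized surface $A_0$ built from the elliptic curve $E'=\CC/(\ZZ+\ZZ \tau/n)$ with gluing parameter $e'=\alpha(e)$, where $\alpha(z)=z/n$, writes the ambient degenerating family as a quotient $\A=G\backslash\widetilde{\A}$, and passes to a finite-index subgroup $G_n<G$ with $G_n\backslash G\cong\ZZ_n$. The quotient $\SS=G_n\backslash\widetilde{\A}$ is then automatically smooth, its general fiber is the degree-$n$ isogeny cover of a principally polarized abelian surface and hence carries the pullback polarization, which is of type $(1,n)$, and its central fiber is exactly $S_0$; the class of $\nu^*L_0$ comes from \cite[II, Prop.~5.18]{HKW}. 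Your step ``check that the argument of \cite{HW} only uses $\deg\N=0$ and the nontorsion hypothesis, not the primality of $n$'' is precisely the point where a proof is required and none is supplied: the existing constructions cover $n=1$ and $n$ an odd prime, and extending the period-matrix/toroidal construction directly to composite $n$ is not a routine verification. Without some device such as the isogeny trick, conditions (i)--(iv) of Definition~\ref{def:dege} --- in particular smoothness of the total space $\SS$ and the existence of $\L$ on all of $\SS$ --- remain unestablished, and your proposed local check of the model $xy=t$ presupposes the family whose existence is at issue.

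A second, smaller gap concerns the polarization type. You propose to confirm that $\L_{|S_t}$ has type $(1,n)$ by computing $L_0^2=(\sigma_0+nF)^2=2n$. For composite $n$ the self-intersection does not determine the type: for $n=4$, both $(1,4)$ and $(2,2)$ give self-intersection $8$. The paper again gets the type for free, since the pullback of a principal polarization under an isogeny with kernel isomorphic to $\ZZ_n$ is of type $(1,n)$. Your descent construction of $L_0$ on $S_0$ via gluing data for $\O_R(\sigma_0+nF)$ is a sensible way to produce the line bundle on the central fiber, and the intersection numbers you record are correct, but it neither produces the extension $\L$ over $\SS$ nor pins down the type on nearby fibers. (Note also that the proposition makes no nontorsion assumption on $e$; that hypothesis only enters later, in \S\ref{severi}.)
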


\begin{proof}
Consider the codimension $1$ boundary component of the Igusa compactification $A^*(1,1)$ of the moduli space of principally polarized abelian surfaces $A(1,1)$ whose points correspond to period matrices of the form $\left(\begin{array}{llll}1&0&0&\tau_2\\0&1&\tau_2&\tau_3\end{array}\right)$, where $\tau_3\in\mathbb{H}$ and $\tau_2\in\CC$. By \cite[II,3; II,5B]{HKW}, there is a degenerate principally polarized abelian surface $A_0$  associated with such a matrix coinciding with the surface $S_0$ in our statement when $n=1$, $\tau:=\tau_3$ and $e:=[\tau_2] \in E$. We denote the corresponding family as in Definition \ref{def:dege} by $\varphi: \A \to \DD$ and the line bundle on $\A$ by $\L_\A$. One can choose $\DD$ such that $\A$ and $\A':=\varphi^{-1}(\DD \setminus \{0\})$ have the forms $\A'=G\backslash\widetilde{\A}'$ and $\A=G\backslash\widetilde{\A}$, where $G$ is a discrete group acting freely and properly discontinuously on some smooth, analytic spaces $\widetilde{\A}'$ and $\widetilde{\A}$ (cf. \cite[II, 3A]{HKW} and choose $\DD$ inside $U_{\sigma_{-1}}$).
The case $n=1$ now follows from  \cite[II, Prop. 5.18]{HKW}, where the class of $\nu^*L_0$ is computed. 

Let now $n>1$. Denote by $A_0$ the degenerate principally polarized abelian surface constructed starting from the elliptic curve $E':=\CC/(\ZZ+\ZZ \tau/n)$ and using as gluing parameter the point $e'$, which is the image of $e$ under the isomorphism $\alpha:E\to E'$ induced by $\alpha(z)=z/n$. We consider the degenerating family $\varphi:\mathcal{A}\to\mathbb{D}$ centered at $A_0$ constructed above. Up to restricting $\DD$ one finds a subgroup $G_n<G$ of finite index ($G_n\backslash G\simeq \ZZ_n$) such that $f':\mathcal{S}':=G_n\backslash\widetilde{\A}'\to \DD^*$ is a family of $(1,n)$-polarized abelian surfaces with a level structure. This reflects the fact that, given any abelian surface $S$ with polarization $L$ of type $(1,n)$ and having fixed a level structure on it, there exists a unique isogeny $\iota:S\to A$ with kernel isomorphic to $\ZZ_n$ such that $A$ is an abelian surface endowed with a principal polarization $L'$ and $\iota^*L'=L$. Since $G_n$ is a subgroup of $G$, it still acts freely and properly discontinuously on $\widetilde{\A}$ and the quotient space $\mathcal{S}=G_n\backslash\widetilde{\A}$ is smooth. We obtain in this way a degenerating family $f:\mathcal{S}\to\DD$ which extends $f'$ and, by construction, the fiber of $f$ over $0$ is the surface $S_0$ in the statement. If $\pi:\mathcal{S}\to\A$ is the map given by taking the quotient with respect to $G_n\backslash G$, the line bundle $L_0$ is obtained by restricting $\pi^*\L_\A$ to $S_0$.
\end{proof}

We make some remarks on $S_0$ and its normalization $R$. One has $K_R \sim -\sigma_0-\sigma_{\infty}$, whence $S_0$ has trivial dualizing bundle. The ruling of $R$ makes it possible to identify both $\sigma_{\infty}$ and $\sigma_0$ with $E$. On the singular surface $S_0$, a point $x\in \sigma_\infty\simeq E$ is identified with $x \+ e\in \sigma_0\simeq E$. It is easy to verify that $L_0^2=2n$.

Let $W:= \nu^*H^0(S_0,L_0) \subset H^0(R,\nu^* L_0)$ be the subspace of sections that are pullbacks of sections of $L_0$. Then  
$|W|:=\PP(W)=\nu^*|L_0| \subset |\nu^*L_0|$ is the linear subsystem of curves that are inverse images of curves in $|L_0|$ under the normalization $\nu$. These are curves $C \in |\nu^*L_0|$ satisfying
\begin{equation}
  \label{eq:cond}
 C \cap \sigma_{0} = (C \cap \sigma_{\infty}) \+e 
\end{equation}
as divisors on $E\cong \sigma_{\infty} \cong \sigma_0$. 

\begin{lemma} \label{lemma:rest}
The restriction map yields an isomorphism
  \begin{equation}
    \label{eq:rest}
\xymatrix{ 
r: H^0(\O_R(\nu^* L_0)) \ar[r]^{\hspace{-1.5cm}\cong} &  
H^0(\O_{\sigma_{\infty}}(\nu^* L_0)) 
\+ H^0(\O_{\sigma_0}(\nu^* L_0))
}   
\end{equation}
such that $s\in W$ if and only if $r(s)=(s_{\infty},s_0)$ with $s_0=T^*_e(s_{\infty})$, where $T_e$ denotes the translation by $e\in E$.
In particular, we obtain isomorphisms
 \begin{equation}
    \label{eq:rest2}
\xymatrix{
H^0(S_0, L_0)\cong   W  \ar[r]^{\stackrel{\Phi_e}{\cong}} &  
H^0\left(\O_{E}\left(\nu^* L_0\right)\right)}
\end{equation}
where $\Phi_e$ is the restriction map and its inverse is given by 
$s \mapsto r^{-1}\Big(s, T^*_e(s)\Big)$
\end{lemma}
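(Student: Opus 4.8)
The plan is to prove the cohomological statement \eqref{eq:rest} first, and then cut out $W$ inside $H^0(R,\nu^*L_0)$ using the gluing data. Since $\sigma_0$ and $\sigma_\infty$ are disjoint, the map $r$ sits in the restriction sequence
\[
0\to \nu^*L_0(-\sigma_0-\sigma_\infty)\to \nu^*L_0 \to \O_{\sigma_0}(\nu^*L_0)\oplus\O_{\sigma_\infty}(\nu^*L_0)\to 0,
\]
so it suffices to show that the kernel sheaf has vanishing $H^0$ and $H^1$, which makes $r$ both injective and surjective. I would verify this on the ruling $\pi\colon R\to E$: the standard intersection numbers together with $\nu^*L_0\equiv\sigma_0+nF$ give $\nu^*L_0\cdot F=1$, so $\nu^*L_0(-\sigma_0-\sigma_\infty)$ restricts to $\O_{\PP^1}(-1)$ on every fibre. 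Hence $\pi_*$ and $R^1\pi_*$ of this line bundle both vanish, and by the Leray spectral sequence every $H^i\big(R,\nu^*L_0(-\sigma_0-\sigma_\infty)\big)$ vanishes as well. Thus $r$ is an isomorphism, and each target summand is a line bundle on $E\cong\sigma_0,\sigma_\infty$.

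To identify $W$, recall that a section $s\in H^0(R,\nu^*L_0)$ descends along the normalization $\nu$, i.e.\ lies in $W=\nu^*H^0(S_0,L_0)$, exactly when its restrictions to the two sections agree under the identification used to build $(S_0,L_0)$ in Proposition \ref{frittata}. As that construction glues $x\in\sigma_\infty$ to $x\+e\in\sigma_0$ and carries $L_0$ along compatibly, writing $r(s)=(s_\infty,s_0)$ the descent condition becomes $s_0=T_e^*(s_\infty)$; passing to zero divisors this recovers precisely the condition \eqref{eq:cond} by which $W$ was described above. Here $T_e^*$ is understood through the fixed isomorphism $\O_{\sigma_0}(\nu^*L_0)\cong T_e^*\O_{\sigma_\infty}(\nu^*L_0)$ encoded in the gluing of $L_0$, whose very existence is what Proposition \ref{frittata} guarantees.

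Combining the two steps, $r$ carries $W$ isomorphically onto the graph $\{(s_\infty,T_e^*s_\infty)\}$, so the composite of $r$ with projection to the first factor, namely restriction to $\sigma_\infty\cong E$, is the claimed isomorphism $\Phi_e\colon W\to H^0(\O_E(\nu^*L_0))$, with inverse $s\mapsto r^{-1}(s,T_e^*s)$. The one genuinely delicate point is the second step: one must read off from the construction in Proposition \ref{frittata} that the line-bundle gluing introduces no twist beyond the translation $T_e$, so that the compatibility condition is the clean equality $s_0=T_e^*(s_\infty)$ rather than this relation up to an unknown scalar or an automorphism of $\O_E(\nu^*L_0)$. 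Everything else is formal once the fibrewise $\O_{\PP^1}(-1)$ computation is in hand.
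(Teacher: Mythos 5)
Your proof is correct and follows essentially the same route as the paper: the paper likewise reduces \eqref{eq:rest} to the vanishing of $h^i(\nu^*L_0-\sigma_\infty-\sigma_0)$ for $i=0,1$ (justified there by the numerical equivalence with $-\sigma_\infty+nF$, i.e.\ degree $-1$ on the fibres, which is exactly your Leray computation), and then treats the identification of $W$ as a reformulation of the gluing condition \eqref{eq:cond}. Your explicit remark that the gluing of $L_0$ introduces no twist beyond $T_e$ is a fair flag of the one point the paper delegates to the constructions in \cite{HKW} and \cite{HW}.
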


\begin{proof}
(See also the proofs of \cite[II, Prop.~5.45]{HKW} and \cite[Prop.~4.2.5]{HW}.)
Since $\nu^* L_0-\sigma_{\infty}-\sigma_0 \equiv -\sigma_{\infty}+nF \equiv K_R + \sigma_0+nF$, we have $h^i(\nu^* L_0-\sigma_{\infty}-\sigma_0)=0$ for $i=0,1$, which yields \eqref{eq:rest}. The claim about the sections in $W$ is just a reformulation of condition \eqref{eq:cond}. The isomorphisms in (\ref{eq:rest2}) are then immediate.
\end{proof}

The lemma yields in particular that $\dim |W|= \deg \O_{E}(\nu^* L_0)-1=n-1=\dim |L_t|$. 

\begin{remark}
As in \cite[Thm. 4.2.6]{HW}, one can show that, as soon as $n \geq 5$ and $e\in E$ is not $2$-torsion,  the line bundle $L_0$ is very ample and defines an embedding of $S_0$ into the {\it translation scroll} 
$$S_0 := \cup_{x \in E} L(x,x \+e),$$ 
where $E\subset \PP^{n-1}$ is a rational normal curve. This surface has degree $2n$ and is smooth outside the double curve $E$. In this case, one has $L_0\simeq\O_{S_0}(1)$. 
\end{remark}

\section{Severi varieties on abelian surfaces}\label{severi}
This section is devoted to the study of limits of nodal curves on the degenerate abelian surface $(S_0,L_0)$. We will
prove Theorem \ref{thm:main1} and lay the ground for the proofs of the existence statements in Theorems \ref{thm:main2} and \ref{thm:main4}. First of all, we recall some background material on Severi varieties. Let $S$ be a projective irreducible surface with normal crossing singularities and let $|L|$ be a base point free,
complete linear system of Cartier divisors on $S$ whose general element is a connected curve $L$ of arithmetic genus $p=p_a(L)$ with at most
nodes as singularities, located at the singular locus  of $S$. Following \cite{LS}, we denote by $\{L\}$ the connected component of $\Hilb(S)$ containing $|L|$.

Let $\delta$ be a nonnegative integer. We denote by
$V_{|L|, \delta}(S)$ the locally closed 
subscheme of $|L|$ parametrizing curves $C\in \vert L\vert$ having only nodes as singularities, exactly $\delta$ of them (called the \emph{marked nodes}) off
the singular locus of $S$, and such that the partial normalization $\widetilde C$ at these 
$\delta$ nodes is connected (i.e., the marked nodes are \emph{non-disconnecting nodes}). We set $g:=p-\delta=p_a(\widetilde C)$. We likewise denote by 
$V_{\{L\}, \delta}(S)$ the analogous subscheme of
$\{L\}$.

As customary,  $V_{|L|, \delta}(S)$ and $V_{\{L\}, \delta}(S)$ will be called {\it regular} if they are smooth of codimension $\delta$ in $|L|$ and $\{L\}$, respectively.

If $S$ is smooth these varieties  are classically called {\em Severi
varieties} of $\delta$-nodal curves. We keep the same name in our more general setting. Many results proved for smooth $S$ go through. In particular, the proofs of Propositions 1.1 and 1.2 in \cite{LS} yield:

\begin{prop} \label{prop:rego}
If $\omega_S$ is trivial, then $V_{\{L\}, \delta}(S)$ is always regular when nonempty. 
\end{prop}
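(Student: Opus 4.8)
The plan is to prove regularity of $V_{\{L\},\delta}(S)$ (when nonempty) by a deformation-theoretic argument controlling the tangent space at a point $[C]$ via the equisingular (locally trivial) deformation theory of the pair, exactly as in \cite{LS}. Let $\widetilde{C} \to C$ be the partial normalization at the $\delta$ marked nodes, let $N_1, \ldots, N_\delta$ denote these nodes, and let $\sigma \colon \widetilde{C} \to S$ be the induced map. First I would identify the Zariski tangent space to $V_{\{L\},\delta}(S)$ at $[C]$ with the space of first-order deformations of $C$ in $\{L\}$ that preserve the $\delta$ marked nodes, which is standard to describe as $H^0$ of the equisingular normal sheaf $N'_{C/S} \subset N_{C/S}$ obtained by imposing the adjoint (nodal) conditions at the marked nodes. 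Concretely, $N'_{C/S}$ is the subsheaf of sections of $N_{C/S}$ vanishing to the appropriate order at the $N_i$, so one gets an exact sequence
\begin{equation}\label{eq:esseq}
0 \longrightarrow N'_{C/S} \longrightarrow N_{C/S} \longrightarrow \bigoplus_{i=1}^{\delta} \CC_{N_i} \longrightarrow 0.
\end{equation}

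Next I would use the triviality of $\omega_S$ to compute the relevant cohomology. Since $C$ is a curve on $S$ with $\omega_S \cong \O_S$, adjunction gives $\omega_C \cong N_{C/S} = \O_C(C)$ (the normal bundle coincides with the dualizing-twist on a Gorenstein curve), so the obstruction space — governed by $H^1$ of the equisingular normal sheaf $N'_{C/S}$ — can be handled by Serre duality on the partial normalization $\widetilde{C}$. The key point is that, after pulling back to $\widetilde{C}$ and accounting for the $\delta$ imposed nodal conditions, the dual of the relevant $H^1$ becomes the space of sections of a line bundle of negative degree (or more precisely a sheaf with no sections because the $\delta$ conditions strictly lower the degree below the canonical one), which forces $H^1(N'_{C/S}) = 0$. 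This vanishing is what simultaneously yields smoothness of $V_{\{L\},\delta}(S)$ at $[C]$ and that its dimension equals the expected value $\chi$ computed from \eqref{eq:esseq}, i.e.\ codimension exactly $\delta$ in $\{L\}$.

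I expect the main obstacle to be the careful bookkeeping at the marked nodes: one must verify that the $\delta$ marked nodes are genuinely non-disconnecting (which is built into the definition of $V_{\{L\},\delta}(S)$ and is what keeps $\widetilde{C}$ connected), and then check that imposing these nodal conditions really does kill $H^1$ rather than merely reducing its dimension. The triviality of $\omega_S$ is precisely the hypothesis that makes the duality computation clean — without it one would pick up a twist by $\omega_S|_C$ that could contribute unwanted sections. Since this is exactly the situation handled in \cite[Props.~1.1 and 1.2]{LS} for the analogous surfaces treated there, and those proofs only use the normal-crossing hypothesis on $S$ together with triviality of the dualizing sheaf, I would invoke that machinery directly, verifying only that the singular-surface setting of our $S_0$ (irreducible with simple normal crossings, marked nodes located off $\Sing S$) satisfies all the hypotheses of their argument. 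The remaining work is then purely formal: transcribe the exact sequence \eqref{eq:esseq}, apply Serre duality on $\widetilde{C}$, and read off $h^1 = 0$.
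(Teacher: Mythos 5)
Your approach coincides with the paper's: the proof there consists entirely of the remark that the proofs of Propositions 1.1 and 1.2 in \cite{LS} carry over to an irreducible surface with normal crossing singularities and trivial dualizing sheaf, which is exactly the machinery you invoke (equisingular normal sheaf, adjunction $N_{C/S}\cong\omega_C$, duality on the partial normalization). One small correction to your sketch: $H^1(N'_{C/S})$ does \emph{not} vanish --- since $N'_{C/S}\cong\nu_*\omega_{\widetilde{C}}$ it is dual to $H^0(\O_{\widetilde{C}})\cong\CC$ --- and what the argument actually uses is that $h^1(N'_{C/S})=h^1(N_{C/S})=1$, so that $H^0(N_{C/S})\to\bigoplus_{i=1}^{\delta}\CC_{N_i}$ is surjective (the marked nodes impose independent conditions); this is precisely where connectedness of $\widetilde{C}$, i.e.\ the non-disconnecting hypothesis, enters.
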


\begin{remark} \label{rem:sticazzi}
  If $S$ is a smooth abelian surface, then  $\{L\}$ is fibered over the dual surface $\hat{S}$ and each fiber is a translate of the central fiber $|L|$ (cf. \cite{LS}). This induces a corresponding fibration of 
$V_{\{L\}, \delta}(S)$ over  $\hat{S}$ having central fiber equal to $V_{|L|, \delta}(S)$; any other fiber is the Severi variety of $\delta$-nodal curves in a translate of $|L|$ and is naturally isomorphic to $V_{|L|, \delta}(S)$. In particular, the regularity of $V_{\{L\}, \delta}(S)$ (ensured by Proposition \ref{prop:rego}) implies that of $V_{|L|, \delta}(S)$ and all of its translates.
\end{remark}

 From now on, $(S,L)$ will be a general smooth $(1,n)$-polarized abelian surface. In order to study $V_{|L|, \delta}(S)$, we will investigate the Severi variety $V_{|L_0|, \delta}(S_0)$, where $(S_0,L_0)$ is the degenerate abelian surface from Proposition \ref{frittata}. Henceforth,  we assume that $e\in E$ is a nontorsion point. We set $$V_{|W|,\delta}(R):=\nu^*V_{|L_0|, \delta}(S_0)\subset |W|.$$ The elements of $V_{|W|,\delta}(R)$ parametrize curves in $|W|$ containing $\delta$ disjoint fibers algebraically equivalent to $F$, since $\nu^*L_0\cdot F=1$ with the same notation as in Proposition \ref{frittata}. Also note that if a curve in $|W|$ contains a fiber $F$ and $F \cap  \sigma_{\infty}=P$, then the gluing condition \eqref{eq:cond} forces the residual curve  $C-F$ to pass through $P \ominus e \in \sigma_{\infty}$ and $P \+ e \in \sigma_0$.

 Let $l \geq 1$ be an integer. A sequence $\{F_1, \ldots, F_l\}$ of $l$ distinct fibers is called a {\it special $l$-sequence} if
\[ F_i \cap \sigma_{\infty} = (F_{i-1} \cap \sigma_{\infty}) \+ e \; \; \mbox{for all} \; \; i=2, \ldots,l.\]
A sequence $\{P_1, \ldots, P_l\}$ of  $l$ distinct points on 
$\sigma_{\infty} \cong E$ is called a {\it special $l$-sequence} if
\[ P_i = P_{i-1} \+ e \; \; \mbox{for all} \; \; i=2, \ldots,l;\]
in other words, the sequence is $\{P, P \+ e, P \+ e^{\+2}, \ldots, P \+ e^{\+(l-1)}\}$.
By definition, the union of the fibers in a special $l$-sequence of fibers intersects
$\sigma_{\infty}$ along a special $l$-sequence of points. Conversely, any such sequence of points uniquely determines a special sequence of fibers. The pair of points  $P_0 :=P \ominus e \in \sigma_\infty$ and  $P_{l+1}:=P \+ e^{\+ l} \in \sigma_0$ will be called {\it the pair of points 
associated  with the special $l$-sequence of fibers or points}. We consider $P_0 +P_{l+1}$ as a point in $\Sym^2(E)$; then, $P_0 +P_{l+1}$ naturally lies on the curve $ \mathfrak{c}_{e,l+1}:=\left\{x+(x\+e^{\+l+1})\in \Sym^2(E)\; |\, x\in E\right\}$. More precisely, in view of the above considerations, the following holds:

\begin{lemma} \label{lemma:coppie}
A special $l$-sequence of fibers determines 
and is determined by a point on $\mathfrak{c}_{e,l+1}$. 
\end{lemma}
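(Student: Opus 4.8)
The plan is to prove the lemma by assembling a chain of elementary bijections of $E$, with the nontorsion hypothesis on $e$ supplying the one nonformal ingredient. First I would record that a special $l$-sequence of fibers is completely encoded by its first point $P := F_1 \cap \sigma_\infty \in E$: by definition its trace on $\sigma_\infty$ is the special sequence of points $\{P, P\+e, \ldots, P\+e^{\+(l-1)}\}$, and since $e$ is nontorsion these $l$ points are pairwise distinct, so \emph{every} $P\in E$ genuinely determines a special $l$-sequence. Combined with the correspondence recalled just before the statement (a special sequence of fibers is determined by, and determines, its intersection with $\sigma_\infty$), this gives a bijection between special $l$-sequences of fibers and points $P\in E$.

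For the forward assertion (``determines'') I would simply carry out the computation implicit in the definition of the associated pair. Setting $x := P_0 = P\ominus e$, one has
$x\+e^{\+(l+1)} = (P\ominus e)\+e\+e^{\+l} = P\+e^{\+l} = P_{l+1}$,
so the unordered pair $P_0 + P_{l+1}$ equals $x + (x\+e^{\+(l+1)})$ and hence lies on $\mathfrak{c}_{e,l+1}$. This yields a well-defined map from special $l$-sequences of fibers to points of $\mathfrak{c}_{e,l+1}$. The reverse assertion (``is determined by'') is where the only real work lies: I must show that the parametrization $\psi\colon E\to \Sym^2(E)$, $x\mapsto x + (x\+e^{\+(l+1)})$, whose image is $\mathfrak{c}_{e,l+1}$, is injective. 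If $\psi(x)=\psi(x')$ then $\{x,\, x\+e^{\+(l+1)}\} = \{x',\, x'\+e^{\+(l+1)}\}$; the diagonal matching forces $x=x'$, while the only alternative (``swapped'') matching $x = x'\+e^{\+(l+1)}$, $x\+e^{\+(l+1)} = x'$ would give $e^{\+2(l+1)} = e_0$, impossible since $e$ is nontorsion and $2(l+1)\geq 4$. The same computation shows $P_0\neq P_{l+1}$, so these pairs honestly lie off the diagonal of $\Sym^2(E)$. Injectivity of $\psi$ then lets me recover $x$ uniquely from a point of $\mathfrak{c}_{e,l+1}$, whence $P = x\+e$, whence the special $l$-sequence via the first correspondence.

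Finally I would note that the two maps are mutually inverse by construction, which establishes the claimed bijection and proves the lemma. The main obstacle, and indeed the only step that is not pure bookkeeping, is the injectivity of $\psi$; it is precisely here that the standing assumption that $e$ be nontorsion is indispensable, since for $2(l+1)$-torsion $e$ the ``swap'' would collapse two distinct special sequences onto a single point of $\mathfrak{c}_{e,l+1}$.
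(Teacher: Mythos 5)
Your proof is correct and follows essentially the same route as the paper, which states the lemma as an immediate consequence of the preceding discussion of special sequences and their associated pairs. You merely make explicit the two details the paper leaves implicit, namely that the associated pair $P_0+P_{l+1}$ equals $x+(x\+e^{\+(l+1)})$ with $x=P_0$, and that the parametrization $x\mapsto x+(x\+e^{\+(l+1)})$ of $\mathfrak{c}_{e,l+1}$ is injective because $e$ is nontorsion.
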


Figure \ref{fig:abe1} below, where the arrows between two points indicate a shift by $\+ e$ on $\sigma_0$, shows a special $5$-sequence
of fibers $\{F_1, \ldots, F_5\}$ with corresponding special $5$-sequence
of points $\{P_1, \ldots, P_5\}$. The associated pair of points on $E \cong \sigma_{\infty}\cong\sigma_0$ is $\{P_0,P_6\}$. 
(Recall that the identification of the two sections $\sigma_{\infty}$ and  $\sigma_0$ with $E$ is made through the fibers $F$.)

\begin{figure}[ht] 
\[
\includegraphics[width=10cm]{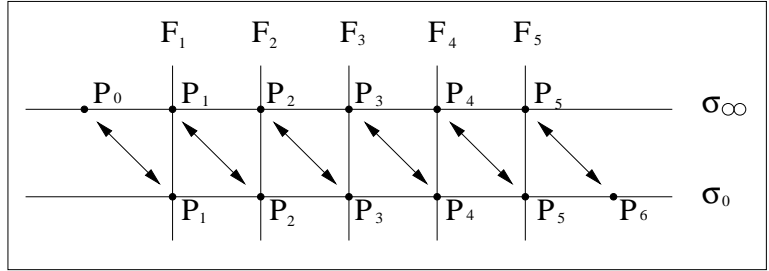}
\]
\caption{A special $5$-sequence
of fibers and points}
\label{fig:abe1}
\end{figure}

We remark that the unique irreducible component $\Gamma$ of a curve $C\in V_{|W|, \delta}(R)$ that is not a fiber is a section of $R$ over $E$ and 
$\Gamma \cap \sigma_{\infty}$ does not contain any {\it special $l$-sequence} of points, for any $l \geq 1$. We will now describe the intersection points of $\Gamma$ with $\sigma_\infty$ and $\sigma_0$.

As soon as $C$ contains a special $l$-sequence of fibers not contained in any special $(l+1)$-sequence, with associated pair $(P_0,P_{l+1})$, the curve $C$, and more precisely its irreducible component $\Gamma$, passes through the 
points $P_0 \in \sigma_{\infty}$ and $P_{l+1} \in \sigma_0$ by the gluing condition \eqref{eq:cond}. We call such a pair of points a {\it distinguished pair of points on $\Gamma$ of the first kind}. On the other hand, if a point $P\in \Gamma\cap \sigma_\infty$ is not associated to any special $l$-sequence of fibers contained in $C$ with $l\geq 1$, then $\Gamma$ must pass through  $P\oplus e\in \sigma_0$ and we call the pair $(P,P\+e)$ a {\it distinguished pair of points on $\Gamma$ of the second kind}.

Figure \ref{fig:abe2} below shows a curve $C$ in $|W|$ containing the special $5$-sequence
of fibers in Figure \ref{fig:abe1}, with irreducible component that is not a fiber being $\Gamma$.  Its image $\nu(C)$ in $S_0$ is also shown.
The distinguished pair of points on $\Gamma$ of the first kind is $(P_0,P_6)$.

\begin{figure}[ht] 
\[
\includegraphics[width=8cm]{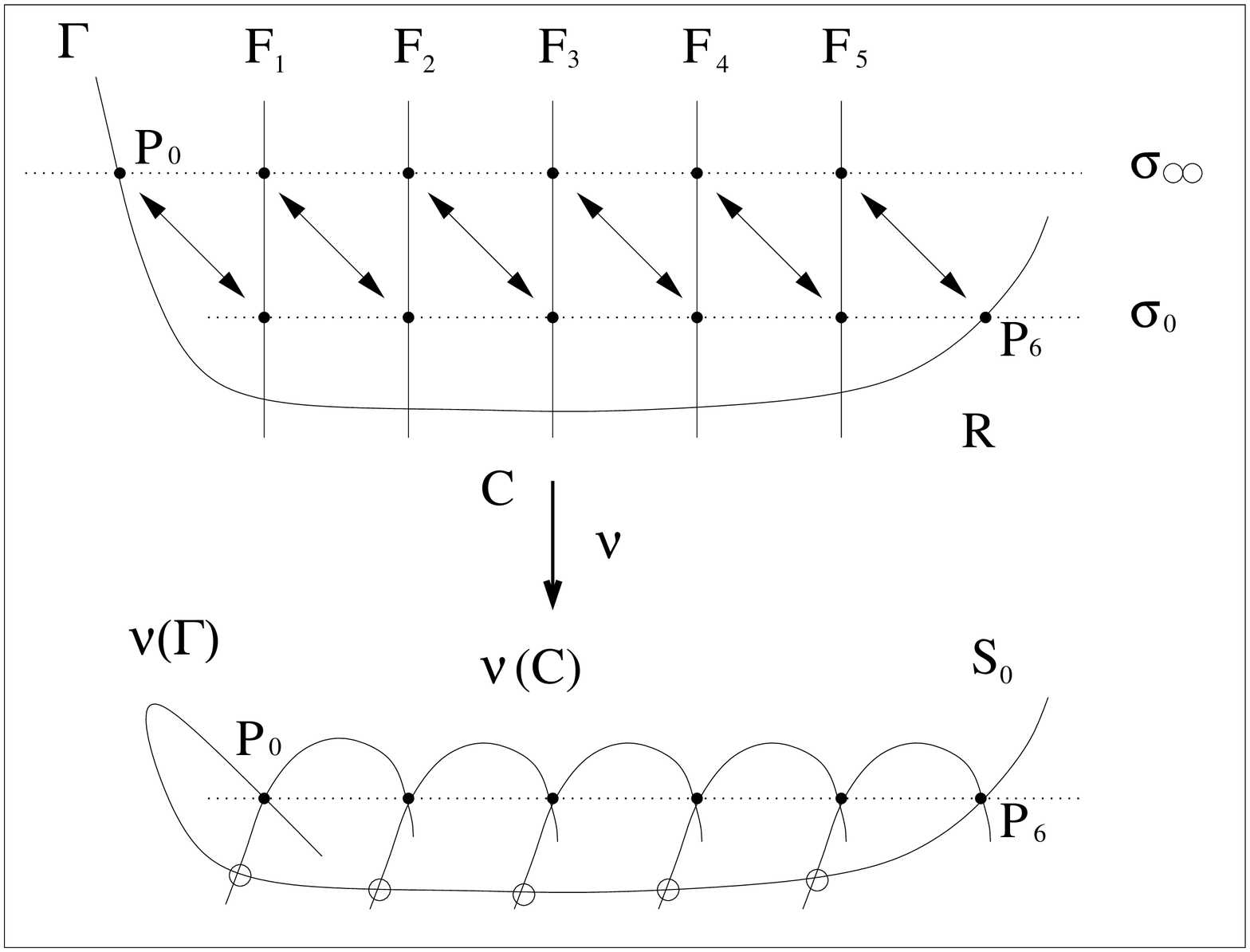}
\]
\caption{A curve in $|W|$ containing the special $5$-sequence
of fibers in Figure \ref{fig:abe1} and its image by $\nu$ in $S_0$.}
\label{fig:abe2}
\end{figure}

By the above argument, with any curve $C\in V_{|W|,\delta}(R)$ we can associate an $n$-tuple of nonnegative integers $(\alpha_0,\ldots, \alpha_{n-1})$ by setting
\begin{align*}
\alpha_0&:=\sharp\{\mbox{distinguished pairs of points on $\Gamma\subset C$ of the second kind}\},\\
\alpha_l&:=\sharp\{\textrm{special }l\textrm{-sequences of fibers in }C\textrm{ not contained in any }(l+1)\textrm{-sequence}\},\textrm{ for }l\geq 1.
\end{align*}
Since the union of fibers in each special $l$-sequence intersects $\sigma_{\infty}$ in $l$ points and $C$ also has to pass through the distinguished point $P_0 \in \sigma_{\infty}$, we have:
\begin{equation}
  \label{eq:numcond}
  n= \sharp\{C\cap\sigma_{\infty}\}= \sum_{j=0}^{ n-1} (j+1)\alpha_j.
\end{equation}
Furthermore, the number $\delta$ of fibers contained in $C$ is given by
\begin{equation}
  \label{eq:nodes}
\delta=\sum_{j=1}^{n-1} j\alpha_j,
\end{equation}
while
\begin{equation}
  \label{eq:distnumb}
\sum_{j=0}^{n-1} \alpha_j = \sharp\{\mbox{distinguished pairs of points on $\Gamma$}\; \mbox{(of first and second kind)}\}.
\end{equation}
\begin{remark}\label{zero}
With the above notation, the case $\delta=0$ corresponds to the $n$-tuple $(n,0,\ldots,0)$.
\end{remark}

For $l \geq 0$ and any point $P \in \sigma_{\infty}$,
we define the divisor
\[ D_l(P):=\sum_{i=0}^l (P \+ e^{\+i})=P+(P\+ e)+\cdots+(P\+e^{\+l}). \]
The following lemma is a straightforward consequence of the above discussion along with Lemma \ref{lemma:rest}, which implies that a curve $C\in\vert W\vert$ is uniquely determined by its intersection with $\sigma_\infty\simeq E$.  

\begin{lemma}\label{lemma:unico}
A curve $C$ in $V_{|W|,\delta}(R)$ determines and is completely determined by  an $n$-tuple of nonnegative integers $(\alpha_0,\ldots, \alpha_{n-1})$ fulfilling \eqref{eq:numcond} and \eqref{eq:nodes}, along with a set of $\alpha_l \leq n$  distinct points $\{P_{l,1}, \ldots, P_{l, \alpha_l}\}$
on $\sigma_{\infty}\simeq E$ for each $l=0, \ldots, n-1,$ such that the following is satisfied:
\begin{equation}
  \label{eq:condiper}
  \sum_{\stackrel{0 \leq l \leq n-1}{1 \leq j \leq \alpha_l}} D_l(P_{l,j}) \in |\O_E(\nu^*L_0)|,\,\, \textrm{ where } E\simeq \sigma_\infty.
\end{equation}
\end{lemma}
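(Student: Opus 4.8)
The plan is to reduce everything to divisors on $\sigma_\infty \cong E$ via Lemma \ref{lemma:rest}, which identifies a curve $C \in |W|$ with the effective divisor $C \cap \sigma_\infty \in |\O_E(\nu^*L_0)|$, and then to match such divisors with the combinatorial data. Both the curves in $V_{|W|,\delta}(R)$ and the admissible data will be seen to inject into $|\O_E(\nu^*L_0)|$ in a compatible way, and the lemma amounts to checking that these two injections have the same image.

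For the forward implication (a curve determines the data), essentially everything has been carried out in the discussion preceding the lemma. Given $C \in V_{|W|,\delta}(R)$, writing $C = \Gamma + (\text{fibers})$ and grouping the fibers into maximal special sequences produces the integers $\alpha_l$ and the distinguished points $P_{l,j}$; tracing the intersection of each special $l$-sequence together with its distinguished point of the first (resp. second) kind against $\sigma_\infty$ yields exactly the block $D_l(P_{l,j})$. Summing, one gets $C \cap \sigma_\infty = \sum_{l,j} D_l(P_{l,j})$, so \eqref{eq:condiper} holds automatically (its left-hand side equals $C\cap\sigma_\infty$, the divisor of $\Phi_e(s)$, and hence lies in $|\O_E(\nu^*L_0)|$), while \eqref{eq:numcond} and \eqref{eq:nodes} are the counting identities already established and the distinctness of $P_{l,1},\ldots,P_{l,\alpha_l}$ records the distinctness of the maximal special sequences of a given length. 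Injectivity of $C \mapsto (\alpha_l, P_{l,j})$ then follows from Lemma \ref{lemma:rest}, since the data recovers $C\cap\sigma_\infty$ and hence $C$.

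For the reverse implication (the data determines a curve), I start from data satisfying \eqref{eq:numcond}, \eqref{eq:nodes} and \eqref{eq:condiper}, form $D := \sum_{l,j} D_l(P_{l,j})$, and use \eqref{eq:condiper} together with Lemma \ref{lemma:rest} to produce a unique $C = \Phi_e^{-1}(D) \in |W|$ with $C \cap \sigma_\infty = D$ and, by the gluing condition \eqref{eq:cond}, $C\cap\sigma_0 = D\+e$. The key local observation is that, since $\nu^*L_0 \cdot F = 1$, the restriction of the defining section of $C$ to any fiber $F_Q$ over $Q\in E$ is a section of $\O_{F_Q}(1)\cong\O_{\PP^1}(1)$; as such it vanishes identically — equivalently $F_Q$ is a component of $C$ — if and only if it vanishes at the two distinct points $F_Q\cap\sigma_\infty$ and $F_Q\cap\sigma_0$, that is, if and only if both $Q$ and $Q\ominus e$ lie in $\Supp(D)$. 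Applying this criterion to $D=\sum_{l,j}D_l(P_{l,j})$ shows that $F_Q$ splits off precisely when $Q=P_{l,j}\+e^{\+i}$ with $1\le i\le l$, producing exactly the $\sum_{l\geq 1} l\alpha_l=\delta$ fibers of the prescribed special sequences; hence $C\in V_{|W|,\delta}(R)$ and the data extracted from $C$ coincides with the input.

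The step I expect to be the main obstacle is this last one: ruling out unexpected fibers and showing the special sequences neither lengthen nor merge — i.e. that $Q\ominus e\notin\Supp(D)$ for each distinguished point $Q=P_{l,j}$, so that the fiber over a distinguished point does \emph{not} split off and the grouping into blocks $D_l(P_{l,j})$ is the one the geometry records. This is exactly where the hypothesis that $e$ is nontorsion is essential: it excludes relations of the form $P_{l,j}\ominus e = P_{l',j'}\+e^{\+i'}$, which would otherwise force a point of $D$ to acquire higher multiplicity (destroying reducedness or creating a disconnecting node) or would concatenate two special sequences into a longer one and thereby alter the $\alpha_l$. Once these coincidences are excluded, the fiber criterion above pins down the vertical part of $C$ unambiguously, and the forward and reverse assignments are mutually inverse.
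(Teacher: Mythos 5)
Your overall route is the one the paper intends (the paper gives no separate argument, calling the lemma a ``straightforward consequence'' of the preceding discussion together with Lemma \ref{lemma:rest}): identify $C\in|W|$ with the divisor $C\cap\sigma_\infty\in|\O_E(\nu^*L_0)|$ and match block decompositions. Your forward direction is correct, and your fiber-splitting criterion --- $F_Q\sub C$ if and only if both $Q$ and $Q\ominus e$ lie in $\Supp(C\cap\sigma_\infty)$ --- is a correct and usefully explicit way of reading off the vertical components of $C=\Phi_e^{-1}(D)$.

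The gap is precisely at the step you single out as the main obstacle. Nontorsion of $e$ does \emph{not} exclude relations of the form $P_{l',j'}=P_{l,j}\+e^{\+(i+1)}$ between \emph{different} distinguished points; it only excludes them when $(l',j')=(l,j)$, i.e.\ it guarantees that the $l+1$ points inside a single block $D_l(P_{l,j})$ are pairwise distinct. Between different blocks nothing is excluded by any hypothesis of the lemma. Concretely, take $n=2$, $\delta=0$, so $\alpha_0=2$, $\alpha_1=0$, and choose $P_{0,2}=P_{0,1}\+e$ with $P_{0,1}+(P_{0,1}\+e)\in|\O_E(\nu^*L_0)|$ (there are four such $P_{0,1}$). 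These data satisfy \eqref{eq:numcond}, \eqref{eq:nodes} and \eqref{eq:condiper}, yet by your own criterion the fiber over $P_{0,2}$ splits off the associated curve, the two length-one blocks concatenate into $D_1(P_{0,1})$, and $C$ lies in $V_{|W|,1}(R)$, not $V_{|W|,0}(R)$. So the reverse assignment is not defined on all admissible data, and the lemma, read as a bijection onto all such data, is literally slightly too strong; the same imprecision propagates into the proof of Lemma \ref{lemma:v0}, where the image of $f$ is in fact only a dense open subset of $h^{-1}|\O_E(\nu^*L_0)|$ (which is all the dimension count needs). The correct fix is not the nontorsion of $e$ but an explicit open non-coincidence condition: the supports of the blocks $D_l(P_{l,j})$ must be pairwise disjoint and no $P_{l,j}\ominus e$ may lie in $\Supp D$. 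On that dense open locus your argument goes through verbatim; you should either impose this condition or note that restricting to it does not affect any of the applications.
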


We denote by $\widetilde{V}(\alpha_0, \ldots, \alpha_{n-1})$ the subset of curves in $V_{|W|,\delta}(R)$ with associated vector $(\alpha_0, \ldots, \alpha_{n-1})$ satisfying conditions \eqref{eq:numcond} and \eqref{eq:nodes}. We define the locally closed subset of $|L_0|$:
$$
V(\alpha_0, \ldots, \alpha_{n-1}):=\{ X=\nu(C)\in |L_0|\;|\; C\in\widetilde{V}(\alpha_0, \ldots, \alpha_{n-1})\}.
$$
If $C=\Gamma\cup\bigcup_{i=1}^\delta F_i \in \widetilde{V}(\alpha_0, \ldots, \alpha_{n-1})$, the images under $\nu$ of the intersection points of $\Gamma$ with the $\delta$ fibers $F_i$ are the {\it marked nodes} of 
$X=\nu(C)$ as a curve in $V_{|L_0|,\delta}(S_0)$. In Figure \ref{fig:abe2} the
marked nodes of $X$, coming from the one specific special $5$-sequence of fibers depicted, are circled.

We set $p:=n+1$ and $g:=p-\delta$. Note that $p$ is the arithmetic genus of all curves in $\vert L_0\vert$. From \eqref{eq:numcond}-\eqref{eq:distnumb}, we have
\begin{equation}
  \label{eq:rela}
  g=1+\sharp\{\mbox{distinguished pairs of points on $\Gamma\subset C$}\}.
\end{equation}

\begin{lemma} \label{lemma:v0}
  Under conditions \eqref{eq:numcond} and \eqref{eq:nodes}, the following hold:
  \begin{itemize}
  \item[(i)] $V(\alpha_0, \ldots, \alpha_{n-1})$ fills up one or more components of $V_{|L_0|,\delta}(S_0)$ of the expected dimension $n-1-\delta=g-2$;
 \item[(ii)] $V_{\{L_0\},\delta}(S_0)$ is regular (i.e., smooth of the expected dimension $g$)  at any point of $V(\alpha_0, \ldots, \alpha_{n-1})$.
  \end{itemize}
\end{lemma}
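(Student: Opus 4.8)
The plan is to prove the two assertions of Lemma \ref{lemma:v0} by first establishing the dimension count combinatorially and then invoking the regularity machinery already set up. The key observation, recorded in Lemma \ref{lemma:unico}, is that a curve $C\in V_{|W|,\delta}(R)$ is completely determined by the data of the integers $(\alpha_0,\ldots,\alpha_{n-1})$ together with the choice of the $\sum_{l}\alpha_l$ distinct points $P_{l,j}\in\sigma_\infty\simeq E$ subject to the single linear-equivalence constraint \eqref{eq:condiper}. So I would begin by computing the dimension of the parameter space of such configurations. The total number of free points is $\sum_{l=0}^{n-1}\alpha_l$, which by \eqref{eq:distnumb} and \eqref{eq:rela} equals $g-1$. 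The constraint \eqref{eq:condiper} that the resulting divisor $\sum_{l,j}D_l(P_{l,j})$ be \emph{a fixed divisor class} in $|\O_E(\nu^*L_0)|$ (rather than merely effective) is the single condition cutting out $V(\alpha_0,\ldots,\alpha_{n-1})$ inside the translates; since $E$ is an elliptic curve, fixing the class of an effective divisor of this degree imposes exactly $1$ condition on the choice of points (the sum of the points in $E$ under $\+$ is prescribed). This yields $\dim V(\alpha_0,\ldots,\alpha_{n-1})=(g-1)-1=g-2$, matching the expected dimension $n-1-\delta$ asserted in (i).

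Next I would check that every such configuration does land in $V_{|L_0|,\delta}(S_0)$ and genuinely fills up components of it. The point is that the construction is reversible: starting from admissible data one reconstructs $C=\Gamma\cup\bigcup F_i$ via Lemma \ref{lemma:rest}, and $\nu(C)=X$ is then a $\delta$-nodal curve in $|L_0|$ whose $\delta$ marked nodes are exactly the images of $\Gamma\cap F_i$, lying off the singular locus of $S_0$ by construction. One must verify that these marked nodes are non-disconnecting, i.e.\ that the partial normalization $\widetilde X$ is connected; this follows because $\Gamma$ is irreducible (being a section of $R$) and each fiber $F_i$ meets $\Gamma$, so detaching the nodes leaves the image of $\Gamma$ connected and carrying all the fibers' normalizations. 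Hence $V(\alpha_0,\ldots,\alpha_{n-1})$ is a locally closed subset of the genuine Severi variety $V_{|L_0|,\delta}(S_0)$ of the expected dimension $g-2$, so it is open and closed in (hence fills up) one or more of its components.

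For part (ii), the strategy is to reduce to Proposition \ref{prop:rego}. Since $S_0$ has trivial dualizing sheaf (noted in the remarks after Proposition \ref{frittata}: $\omega_{S_0}$ is trivial because $K_R\sim-\sigma_0-\sigma_\infty$), Proposition \ref{prop:rego} applies verbatim to $(S_0,L_0)$ and guarantees that $V_{\{L_0\},\delta}(S_0)$ is regular \emph{wherever it is nonempty}, i.e.\ smooth of codimension $\delta$, which is dimension $g$ since $\dim\{L_0\}=\dim|L_0|+2=p=g+\delta$. Having shown in (i) that $V(\alpha_0,\ldots,\alpha_{n-1})$ is a nonempty subset of $V_{|L_0|,\delta}(S_0)\subset V_{\{L_0\},\delta}(S_0)$, nonemptiness at these points is established, and regularity follows immediately from the proposition.

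The main obstacle I anticipate is the dimension count in (i), and specifically verifying that the linear condition \eqref{eq:condiper} cuts down the dimension by exactly one and not more---in other words, that the map sending the point configuration to the class $\sum_{l,j}D_l(P_{l,j})\in\operatorname{Pic}^{2n}(E)$ is dominant onto a single point's fiber of the expected codimension, with no unexpected collapsing. Because the divisors $D_l(P)$ are rigidly structured (each $P_{l,j}$ drags along its whole $e$-chain $P\+e^{\+i}$), one must check that moving the free points still surjects onto the relevant translation class; this is where the nontorsion hypothesis on $e$ is used, ensuring the special $l$-sequences are genuinely distinct and the points in each chain do not collide. I would treat this by computing the differential of the summation-to-$\operatorname{Pic}$ map and confirming it has the expected rank, or equivalently by exhibiting that for fixed $(\alpha_0,\ldots,\alpha_{n-1})$ the positions of all but one free point can be chosen arbitrarily with the last determined (up to finite ambiguity) by \eqref{eq:condiper}.
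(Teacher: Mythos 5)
Your proposal is correct and follows essentially the same route as the paper: parametrize curves in $V(\alpha_0,\ldots,\alpha_{n-1})$ by the $\sum\alpha_l=n-\delta=g-1$ points $P_{l,j}$ via Lemma \ref{lemma:unico}, show that the linear-equivalence constraint \eqref{eq:condiper} cuts down the dimension by exactly one, and deduce (ii) from Proposition \ref{prop:rego} using triviality of $\omega_{S_0}$. The only differences are cosmetic: the class map lands in $\Pic^{n}(E)$, not $\Pic^{2n}(E)$ (the divisor $\sum_{l,j}D_l(P_{l,j})$ has degree $n$ by \eqref{eq:numcond}), and where you propose to check non-degeneracy of this map by moving a single point, the paper translates the whole configuration by a point $P\in E$ and uses surjectivity of $P\mapsto nP$ onto $\Pic^n(E)$ --- both verifications work and neither needs the nontorsion hypothesis on $e$ at this step.
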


\begin{remark}\label{cata}
In the statement of the  lemma we are implicitly using that $\dim\{L_0\}=\dim |L_0|+2$. We refer to \cite[II, 5C]{HKW} for an explanation of the two extra dimensions.
\end{remark}

\renewcommand{\proofname}{Proof of Lemma \ref{lemma:v0}}

\begin{proof}
Given a member $C \in \widetilde{V}(\alpha_0, \ldots, \alpha_{n-1})$,   we denote, for $l=0,\ldots ,n-1$, as in Lemma \ref{lemma:unico}, by 
$\{P_{l,j}\}_{1\leq j\leq \alpha_j}$ the points on $\sigma_{\infty} \cong E$ in the distinguished pairs on $\Gamma\subset C$. Then we have injective maps
 \[
\xymatrix{ 
V(\alpha_0, \ldots, \alpha_{n-1}) \ar[r]^{\hspace{-1.3cm}f} 
&  \Sym^{\alpha_0}(E) \x \cdots \x \Sym^{\alpha_{n-1}}(E)  \ar[r]^{\hspace{1.6cm}h} &  \Sym^n(E) 
\\
X=\nu(C)      \ar@{|->}[r] & \{P_{l,j}\} \ar@{|->}[r] & \sum D_l(P_{l,j}).
}
\]
The target space of $f$ is irreducible of dimension $\sum \alpha_l=n-\delta=\dim |W|+1-\delta$, and the image of $f$ equals $h^{-1}|\O_E(\nu^*L_0)|$ by \eqref{eq:condiper}. We make the following:

\begin{claim}\label{claim:utile}
 $h^{-1}|\O_E(\nu^*L_0)|$ is nonempty and different from $\Sym^{\alpha_0}(E) \x \cdots \x \Sym^{\alpha_{n-1}}(E)$.
 \end{claim}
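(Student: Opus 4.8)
The plan is to reduce Claim \ref{claim:utile} to a surjectivity statement for a morphism to $E$, exploiting the description of complete linear systems on an elliptic curve via the group law. First I would recall that, by \eqref{equation:law}, a degree $n$ effective divisor $D=\sum_{i=1}^n P_i$ satisfies $D\sim (P_1\oplus\cdots\oplus P_n)+(n-1)e_0$, so two such divisors are linearly equivalent if and only if their group-law sums agree. Denoting by $s_n\colon \Sym^n(E)\to E$ the $\oplus$-sum map $\sum P_i\mapsto P_1\oplus\cdots\oplus P_n$, this says that $|\O_E(\nu^*L_0)|$ is exactly the fiber $s_n^{-1}(q)$, where $q$ is the group-law sum of any divisor representing the class of $\O_E(\nu^*L_0)$ (note $\deg\O_E(\nu^*L_0)=n\geq 1$, so this system is nonempty). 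Consequently $h^{-1}|\O_E(\nu^*L_0)|=(s_n\circ h)^{-1}(q)$, and it suffices to understand the composite $s_n\circ h$.

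Next I would compute this composite explicitly. Writing $[m]\colon E\to E$ for multiplication by $m$ in the group law, the $\oplus$-sum of $D_l(P)=\sum_{i=0}^l(P\oplus e^{\oplus i})$ equals $[l+1](P)\oplus e^{\oplus\binom{l+1}{2}}$, since it is the sum of $l+1$ copies of $P$ together with $e^{\oplus(0+1+\cdots+l)}$. Hence
\[
(s_n\circ h)\big(\{P_{l,j}\}\big)=\Big(\bigoplus_{l,j}[l+1](P_{l,j})\Big)\oplus c,\qquad c:=e^{\oplus\sum_l\alpha_l\binom{l+1}{2}},
\]
so $s_n\circ h$ is a morphism $\Sym^{\alpha_0}(E)\times\cdots\times\Sym^{\alpha_{n-1}}(E)\to E$ differing from $\{P_{l,j}\}\mapsto\bigoplus_{l,j}[l+1](P_{l,j})$ only by the fixed translation by $c$.

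Finally I would establish surjectivity of $s_n\circ h$. By \eqref{eq:numcond} and \eqref{eq:nodes} one has $\sum_l\alpha_l=n-\delta\geq 1$, so there is an index $l_0$ with $\alpha_{l_0}\geq 1$. Fixing all variables except $P_{l_0,1}$ and letting the latter range over $E$, the map restricts to $P\mapsto[l_0+1](P)\oplus(\text{const})$; since $l_0+1\geq 1$, the isogeny $[l_0+1]$ is surjective, and therefore so is $s_n\circ h$. Surjectivity onto the positive-dimensional $E$ immediately yields both halves of the claim: the fiber $(s_n\circ h)^{-1}(q)$ is nonempty because $q$ lies in the image, and it is a proper subset of the domain because a constant map cannot surject onto $E$. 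I do not anticipate a serious obstacle here; the only points requiring care are the bookkeeping in the group-law computation of $s_n\circ h$ and the observation that surjectivity rests solely on $[l_0+1]$ being a nonzero isogeny.
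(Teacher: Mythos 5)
Your proof is correct and is essentially the paper's argument in a slightly different guise: both identify $|\O_E(\nu^*L_0)|$ as a fiber of the group-law sum map $\Sym^n(E)\to E\cong\Pic^n(E)$ and deduce the claim from surjectivity of the composite with $h$, which in turn rests on surjectivity of a multiplication-by-$m$ isogeny on $E$. The only cosmetic difference is that the paper translates the whole configuration diagonally by a point $P$ (so the relevant isogeny is $[n]$, phrased as ``every complete degree-$n$ system contains a divisor $nP$''), whereas you move a single point $P_{l_0,1}$ and use surjectivity of $[l_0+1]$.
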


Granting this, we can conclude that the image of $f$ is nonempty and equidimensional of codimension $1$ in $\Sym^{\alpha_0}(E) \x \cdots \x \Sym^{\alpha_{n-1}}(E)$. Therefore, also $V(\alpha_0, \ldots, \alpha_{n-1})$ is nonempty and part (ii) is a direct consequence of Proposition \ref{prop:rego}. Furthermore, since any curve in $V_{|L_0|,\delta}(S_0)$ lies in
$V(\beta_0, \ldots, \beta_{n-1})$ for some $\beta_i$, it follows that $V(\alpha_0, \ldots, \alpha_{n-1})$ is the union of some irreducible components of $V_{|L_0|,\delta}(S_0)$ of dimension $n-1-\delta=g-2$, proving (i). 

It remains to verify Claim \ref{claim:utile}. It is enough to show that, given any point $D:=\sum  D_l(P_{l,j})$ in the image of $h$ and any other divisor $D'$ of degree $n$ on $E$, we can find a point $P\in E$ such that  
\begin{equation}\label{equation:claim}
h(\{P_{l,j}\+P\})=\sum  D_l(P_{l,j})\+ P=D\+ P\in |D'|.
\end{equation}
We use the isomorphism $\Pic^n(E)\simeq E$ and the fact that $ne_0\sim nQ$ if and only if $Q$ has $n$-torsion (indeed, $nQ\sim Q^{\+ n}+(n-1)e_0$ for any $Q\in E$). Therefore, any complete linear system of degree $n$ on $E$ contains elements supported at only one point. In particular, we find a point $P\in E$ such that $$nP\sim D'+ne_0-D,$$ or equivalently by (\ref{equation:law}), condition (\ref{equation:claim}) is satisfied. This concludes the proof.
\end{proof}

\renewcommand{\proofname}{Proof}

\begin{remark}
The proof of Claim \ref{claim:utile} also shows that $h^{-1}|A|\simeq h^{-1}|\O_E(\nu^*L_0)|$ for all $A\in\Pic^n(E)$.
\end{remark}

Using \eqref{eq:rela}, we obtain the following result:

\begin{lemma} \label{lemma:stablemodel}
Let $X=\nu(C)$ be a curve in ${V}(\alpha_0, \ldots, \alpha_{n-1})$, where $C=\Gamma\cup\bigcup_{i=1}^\delta F_i$. Then $\Gamma$ is a section of $R$ over $E$. Furthermore, the stable model $\overline{X}$ of the partial normalization of $X$ at its $\delta$ marked nodes is the image of $\Gamma \cong E$ under the morphism identifying each distinguished pair of points. In particular, $\overline{X}$ has arithmetic genus $g$.
\end{lemma}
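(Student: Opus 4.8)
The plan is to treat the two assertions in turn. For the section statement I would argue by intersection numbers: since $C\in|\nu^*L_0|$ with $\nu^*L_0\cdot F=1$, while each removed fiber satisfies $F_i\cdot F=0$, the decomposition $C=\Gamma+\sum_{i=1}^\delta F_i$ forces $\Gamma\cdot F=1$. As $\Gamma$ is irreducible, it is a section of $R\to E$, and every section of the $\PP^1$-bundle $R=\PP(\O_E\+\N)$ is isomorphic to its base, giving $\Gamma\cong E$. The stable model will then be obtained by a careful bookkeeping of the identifications made first by the gluing $\nu$ and then by stabilization.

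Next I would write down the nodal structure of $X=\nu(C)$. Its singular points are of two kinds: the $\delta$ \emph{marked} nodes $\nu(\Gamma\cap F_i)$, coming from the transverse meeting of the section $\Gamma$ with the fibers in the interior of $R$; and the nodes on the double curve $\nu(\sigma_\infty)=\nu(\sigma_0)$, produced by the gluing $x\in\sigma_\infty\mapsto x\+e\in\sigma_0$. Using the description preceding the lemma, I would verify that $\nu$ glues the fibers of each special $l$-sequence into a chain $\Gamma-F_1-\cdots-F_l-\Gamma$: the gluing sends the top $P_i\in\sigma_\infty$ of $F_i$ to the bottom $P_{i+1}=P_i\+e\in\sigma_0$ of $F_{i+1}$, while the bottom $P_1\in\sigma_0$ of $F_1$ is identified with $P_0=P_1\ominus e\in\sigma_\infty$ and the top $P_l\in\sigma_\infty$ of $F_l$ with $P_{l+1}=P_l\+e\in\sigma_0$, the two latter points being exactly the associated distinguished pair of the first kind, both lying on $\Gamma$. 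A distinguished pair of the second kind $(P,P\+e)$ involves no fiber at all: here $\nu$ simply identifies the two points $P\in\sigma_\infty$ and $P\+e\in\sigma_0$ of $\Gamma$, producing a self-node of $\Gamma$.

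I would then normalize the $\delta$ marked nodes. This detaches $\Gamma$ from each $F_i$ at the interior point $\Gamma\cap F_i$, so on the partial normalization $\widetilde X$ every fiber $F_i\cong\PP^1$ keeps only its two double-curve nodes and therefore meets the remaining components in exactly two points; each special $l$-sequence thus becomes a chain of $l$ smooth rational curves joining the points $P_0$ and $P_{l+1}$ of $\Gamma$. Passing to the stable model $\overline X$ contracts these semistable rational chains, and contracting a chain joining $P_0$ to $P_{l+1}$ identifies these two points into a single node of $\Gamma$, whereas the second-kind self-nodes of $\Gamma$ survive unchanged. Hence $\overline X$ is precisely the image of $\Gamma\cong E$ under the morphism gluing each distinguished pair. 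Since the number of distinguished pairs equals $\sum_j\alpha_j=g-1$ by \eqref{eq:rela}, the curve $\overline X$ is $E$ with $g-1$ nodes, so $p_a(\overline X)=1+(g-1)=g$.

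The step demanding the most care is the identification of the chain structure: one must check that $\nu$ glues the fibers of a special $l$-sequence into a single chain whose endpoints are exactly the associated first-kind pair, that distinct special sequences and the second-kind pairs do not interact, and hence that contracting the resulting rational chains reproduces exactly the gluing of the distinguished pairs. Once this is pinned down, the passage through normalization and stabilization is the routine fact that contracting a chain of rational bridges identifies its two endpoints.
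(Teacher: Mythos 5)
Your proposal is correct and follows exactly the argument the paper intends: the paper offers no written proof beyond the remark that the lemma follows from \eqref{eq:rela} and the preceding discussion of special sequences and distinguished pairs (illustrated in Figure \ref{fig:abe3}), and your careful tracing of the gluing, the chain structure $\Gamma-F_1-\cdots-F_l-\Gamma$, the partial normalization at the marked nodes, and the contraction of the resulting rational chains is precisely the bookkeeping being left implicit. The intersection-theoretic argument $\Gamma\cdot F=(\nu^*L_0-\delta F)\cdot F=1$ for the section statement and the genus count $p_a(\overline X)=1+(g-1)=g$ via \eqref{eq:rela} are likewise the intended ones.
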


In Figure \ref{fig:abe3} below we show the curve $X=\nu(C)$ from Figure \ref{fig:abe2}, which has $\alpha_5>0$. The normalization separates the images of the fibers $F_1, \ldots, F_6$ from $\nu(\Gamma)$, except for the intersections at the distinguished pair of points $P_0$ and $P_6$. In the stable equivalence class, all images of the fibers are contracted, and the points $P_0$ and $P_6$ are identified.

\begin{figure}[ht] 
\[
\includegraphics[width=7cm]{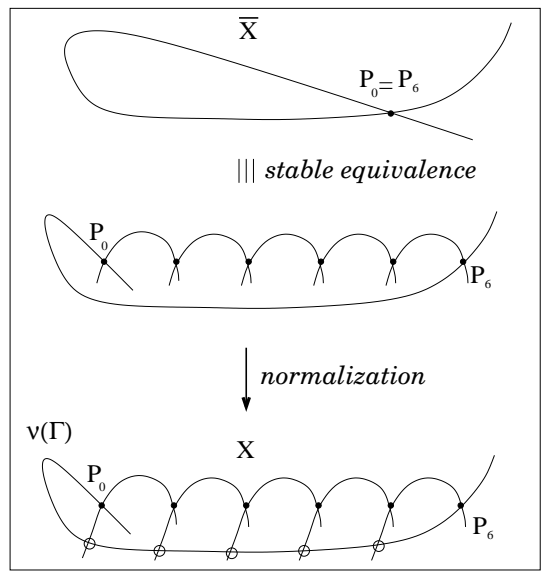}
\]
\caption{The curve $X=\nu(C)$ from Figure \ref{fig:abe2}, its normalization and stable model.}
\label{fig:abe3}
\end{figure}

We return to the family $f: \mathcal{S} \to \mathbb{D}$  in Definition \ref{def:dege} and Proposition \ref{frittata},  with $e\in E$ nontorsion. Let $\H$ be the component of the relative Hilbert scheme containing $\PP(f_*L)$  and $\V_{\{\L\},\delta}^* \to \DD^*=\DD \setminus \{0\}$ the {\it relative Severi variety} of $\delta$-nodal curves in $\H$, with fiber over $t\in  \DD^*$ equal to
$V_{\{L_t\},\delta}(S_t)$. 

\begin{lemma}\label{lem:defo} 
 Assume that $V_{|L_0|, \delta}(S_0) \neq \emptyset$ and let
 $X_0$
be a point of $V_{|L_0|, \delta}(S_0)$.
Then $X_0$ sits in the closure of
$\V_{\{\L\},\delta}^*$  in $\H$ and  $\V_{\{\L\},\delta}^*$   dominates $\mathbb D$.
In particular, for general $t \in \mathbb D$, the Severi variety $V_{\{L_t\},\delta}(S_t)$ is nonempty and regular. 
\end{lemma}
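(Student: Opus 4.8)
The plan is to read the statement as a smoothing result and to feed everything through the regularity of the central Severi variety already established in Lemma \ref{lemma:v0}. Note first that $\V_{\{\L\},\delta}^*$ is, by construction, supported over $\DD^*$; hence showing that $X_0$ lies in its closure $\overline{\V_{\{\L\},\delta}^*}\subset\H$ simultaneously proves that this relative Severi variety is nonempty and, since $X_0$ sits over $0\in\DD$, that $\overline{\V_{\{\L\},\delta}^*}\to\DD$ is surjective, i.e.\ dominates $\DD$. From dominance and the irreducibility of $\DD^*$ the nonemptiness of $V_{\{L_t\},\delta}(S_t)$ for general $t$ follows immediately, and its regularity is then automatic from Proposition \ref{prop:rego}, the smooth abelian surfaces $S_t$ having trivial canonical bundle. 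Thus the whole content is to produce, from the given $\delta$-nodal curve $X_0\subset S_0$, a flat family of $\delta$-nodal curves $C_t\subset S_t$ specializing to $X_0$, with the $\delta$ marked nodes persisting.

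First I would separate the two kinds of singularities of $X_0$. The $\delta$ marked nodes lie off $\Sing(S_0)$, hence in the locus where $\SS\to\DD$ is smooth, and are to be preserved. The remaining $n$ nodes lie on the double curve $E=\Sing(S_0)$, where $\SS\to\DD$ fails to be smooth, and I would treat them via the local model: in suitable coordinates $\SS=\{(x,y,z)\}$ with family map $t=xy$, one has $S_0=\{xy=0\}$, the double curve is the $z$-axis, and $X_0$ is locally $\{xy=0,\,z=0\}$, a node lying on $E$. The decisive local observation is that this node smooths \emph{together} with the surface: the curve $\{xy=t,\,z=0\}\subset S_t$ is smooth and specializes to $X_0$ as $t\to 0$. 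Consequently the $n$ double-curve nodes impose no obstruction and are automatically resolved in any smoothing of the surface.

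Granting this, I would globalize by deforming $X_0$ inside $\SS$ while preserving the $\delta$ marked nodes and letting the $n$ double-curve nodes smooth as above. Writing $N'_{X_0/S_0}$ for the normal sheaf equisingular at the $\delta$ marked nodes, the key input is the regularity of $V_{\{L_0\},\delta}(S_0)$ at $X_0$ coming from Lemma \ref{lemma:v0}(ii), which is exactly the vanishing $H^1(N'_{X_0/S_0})=0$ together with $h^0(N'_{X_0/S_0})=g$. Comparing deformations in $S_0$ and in $\SS$ through the sequence $0\to N'_{X_0/S_0}\to N'_{X_0/\SS}\to N_{S_0/\SS}|_{X_0}\to 0$, with $N_{S_0/\SS}|_{X_0}\cong\O_{X_0}$ (the central fiber being the principal divisor $\{t=0\}$) and $h^0(\O_{X_0})=1$, this vanishing forces the restriction $H^0(N'_{X_0/\SS})\to H^0(\O_{X_0})$ to be surjective, so a first-order deformation moving $X_0$ off the central fiber exists and $h^0(N'_{X_0/\SS})=g+1$. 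Combined with the local analysis of the double-curve nodes, this shows that the relative Severi variety $\V_{\{\L\},\delta}$ is smooth at $[X_0]$ of dimension $g+1$ and that $\V_{\{\L\},\delta}\to\DD$ is submersive there; equivalently, the component of $\V_{\{\L\},\delta}$ through $X_0$ has dimension $g+1$ and so cannot lie in the $g$-dimensional central fiber $V_{\{L_0\},\delta}(S_0)$. Hence $[X_0]$ extends to $\delta$-nodal curves $C_t\subset S_t$ for $t$ near $0$, i.e.\ $X_0\in\overline{\V_{\{\L\},\delta}^*}$ and the projection dominates $\DD$.

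The hard part is precisely the interplay between the degeneration and the $n$ nodes on the double curve, and it is exactly there that the displayed normal-sheaf sequence must be justified rather than taken for granted. A priori the obstruction space for deforming a curve through $\Sing(S_0)$ inside the singular total fiber involves $H^1(\O_{X_0})\cong\CC^p\neq 0$, which could obstruct the smoothing; the point to nail down carefully is that this contribution is spurious, since by the explicit local model the double-curve nodes smooth freely with the surface. Concretely, one must check that the Kodaira--Spencer class of $\SS\to\DD$, paired against $X_0$, lands in the image of the coboundary controlled by the fiberwise term $H^1(N'_{X_0/S_0})$, so that the effective obstruction reduces to this term and vanishes by regularity. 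Once this is secured, dominance yields nonemptiness of $V_{\{L_t\},\delta}(S_t)$ for general $t$, and Proposition \ref{prop:rego} supplies regularity, completing the argument.
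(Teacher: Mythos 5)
Your argument is correct and is, in substance, exactly what the paper's one-line proof delegates to the reference [CK, Lemma 1.4]: the two essential inputs in both cases are the regularity of the central Severi variety (Lemma \ref{lemma:v0}) and the local observation that the $n$ non-marked nodes on $\Sing(S_0)$ smooth automatically with the surface via the model $xy=t$. The only real difference is implementation. The cited argument is a dimension count: each of the $\delta$ marked nodes, sitting at smooth points of $\SS$, imposes at most one condition on the relative family $\H$ (of dimension $p+1$ over $\DD$), so every component of $\V_{\{\L\},\delta}$ through $[X_0]$ has dimension at least $g+1$, whereas the central fiber $V_{\{L_0\},\delta}(S_0)$ has dimension exactly $g$ by regularity; hence no such component lies in the central fiber and all of them dominate $\DD$. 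That count entirely sidesteps the obstruction worry in your last paragraph: one never needs to control $H^1(N'_{X_0/\SS})$. Your cohomological route also closes cleanly, but the right way to finish it is the relative smoothness criterion rather than a Kodaira--Spencer computation: the fiberwise deformation functor is unobstructed since $H^1(N'_{X_0/S_0})=0$, and your surjectivity of $H^0(N'_{X_0/\SS})\to H^0(\O_{X_0})$ says the tangent map to $T_0\DD$ is surjective, so $\V_{\{\L\},\delta}\to\DD$ is smooth at $[X_0]$ regardless of the fact that $H^1(N'_{X_0/\SS})\cong H^1(\O_{X_0})\neq 0$. Either way the conclusion and the final appeal to Proposition \ref{prop:rego} for regularity are as in the paper.
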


\begin{proof} 
 This is proved as in \cite[Lemma 1.4]{CK}; regularity is immediate from  Proposition \ref{prop:rego}.
\end{proof}

Now we can prove the first main result of the paper.

\renewcommand{\proofname}{Proof of Theorem \ref{thm:main1}}

\begin{proof}
  By Lemma \ref{lemma:v0} there is a  component of $V_{|L_0|,\delta}(S_0)$ obtained by taking any $n$-tuple of nonnegative integers $(\alpha_0,\ldots,\alpha_{n-1})$ that fulfills \eqref{eq:numcond} and \eqref{eq:nodes}. For instance, take $\alpha_0=n-\delta-1$, $\alpha_\delta=1$ and $\alpha_i=0$ for $i\neq 0,\delta$. Hence, the statement follows from Lemma \ref{lem:defo} and Remark \ref{rem:sticazzi}.
\end{proof}
\renewcommand{\proofname}{Proof}

Lemma \ref{lem:defo} also yields the existence of a partial compactification of $\V_{\{\L\},\delta}^*$ containing the curves in $V_{\{L_0\},\delta}(S_0)$:

\begin{cor} \label{cor:esisterel}
   Let $f:\SS \to \DD$ be as in Definition \ref{def:dege} and Proposition \ref{frittata}. Then there is an $f$-relative Severi variety $\phi_{\{\L\},\delta}: \V_{\{\L\},\delta} \to \DD$ with fibers $V_{\{L_t\},\delta}(S_t)$ for all $t \in \DD$, such that 
$\V_{\{\L\},\delta}$ is smooth of pure dimension $n+2-\delta$. Moreover, each component of $\V_{\{\L\},\delta}$ dominates $\DD$.
\end{cor}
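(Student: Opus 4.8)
The plan is to construct $\V_{\{\L\},\delta}$ as the closure of the relative Severi variety $\V_{\{\L\},\delta}^*$ inside the relative Hilbert scheme $\H$, and then verify that the added special fiber is exactly $V_{\{L_0\},\delta}(S_0)$, with the total space smooth of the stated dimension. First I would set $\V_{\{\L\},\delta}$ to be the union of $\V_{\{\L\},\delta}^*$ (which lives over $\DD^*$) together with $V_{\{L_0\},\delta}(S_0)$ sitting in the fiber of $\H$ over $0$. By Lemma \ref{lem:defo}, every point $X_0 \in V_{|L_0|,\delta}(S_0)$ lies in the closure of $\V_{\{\L\},\delta}^*$, and by Remark \ref{rem:sticazzi} the same holds after translating, so that every point of $V_{\{L_0\},\delta}(S_0)$ lies in that closure as well; hence this union is precisely the closure of $\V_{\{\L\},\delta}^*$ in $\H$ and comes with the natural projection $\phi_{\{\L\},\delta}$ to $\DD$.

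Next I would compute dimensions. For $t \neq 0$, Theorem \ref{thm:main1} gives that the fiber $V_{\{L_t\},\delta}(S_t)$ is nonempty, smooth of dimension $p-\delta = n+1-\delta$; together with the one-dimensional base $\DD$ this yields pure dimension $n+2-\delta$ over $\DD^*$. The central fiber $V_{\{L_0\},\delta}(S_0)$ has the same dimension $n+1-\delta$ as a fiber, by Lemma \ref{lemma:v0}(ii) (the regularity statement gives the expected dimension $g = p-\delta$ for $V_{\{L_0\},\delta}(S_0)$). Thus the central fiber has exactly the codimension $1$ in $\V_{\{\L\},\delta}$ expected of a fiber of a flat family over a smooth curve, which shows both that $\V_{\{\L\},\delta}$ has pure dimension $n+2-\delta$ and that $\phi_{\{\L\},\delta}$ is dominant (equivalently surjective onto $\DD$) on each component: no component can be contained in the central fiber, since that would force it to have dimension at most $n+1-\delta$, contradicting purity.

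For smoothness I would argue fiberwise via regularity. Over $\DD^*$ the total space $\V_{\{\L\},\delta}^*$ is smooth because each fiber $V_{\{L_t\},\delta}(S_t)$ is smooth of the expected dimension (Theorem \ref{thm:main1}) and the map to the smooth base $\DD^*$ is dominant with smooth fibers of constant dimension, hence smooth. At points of the central fiber the key input is that the relevant Severi variety is \emph{regular}: Proposition \ref{prop:rego} (applicable since $\omega_{S_0}$ is trivial, as noted after Proposition \ref{frittata}) guarantees that $V_{\{L_0\},\delta}(S_0)$ is smooth of the expected dimension at each of its points. Combined with the standard fact that a deformation-theoretically unobstructed equisingular family over the disc produces a smooth total space—here one checks that the first-order deformations of the pair (nodal curve in the fiber, direction transverse to the special fiber) are unobstructed, exactly as encoded by regularity—one concludes smoothness of $\V_{\{\L\},\delta}$ along the central fiber as well.

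The main obstacle I expect is precisely this last point: smoothness of the \emph{total space} at points lying over $t=0$, as opposed to smoothness of the fibers. Fiber-by-fiber regularity is handed to us by Proposition \ref{prop:rego} and Lemma \ref{lemma:v0}, but upgrading this to smoothness of $\V_{\{\L\},\delta}$ across the special fiber requires controlling the obstruction space for deformations of a $\delta$-nodal curve $X_0 \subset S_0$ \emph{within the family $\SS \to \DD$}, allowing the surface to smooth out simultaneously. The cleanest route is to observe that this is the content of the cited Propositions 1.1 and 1.2 of \cite{LS} (which underlie Proposition \ref{prop:rego}) applied in the relative setting: the triviality of the relative dualizing sheaf and the fact that the marked nodes are non-disconnecting force the obstruction group $H^1$ of the relevant normal-sheaf-type object to vanish, so the relative Severi variety is unobstructed and therefore smooth of the expected relative dimension. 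I would therefore phrase the smoothness of $\V_{\{\L\},\delta}$ as a direct consequence of the regularity statement already established in Lemma \ref{lem:defo} and Proposition \ref{prop:rego}, the dimension count above pinning down that this smooth total space has pure dimension $n+2-\delta$ with every component dominating $\DD$.
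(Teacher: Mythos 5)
Your proposal assembles the right ingredients but has two genuine gaps. First, constructing $\V_{\{\L\},\delta}$ as the closure of $\V^*_{\{\L\},\delta}$ in $\H$ does not produce an object whose fibers are the Severi varieties. Lemma \ref{lem:defo} gives one inclusion ($V_{\{L_0\},\delta}(S_0)$ lies in the closure), but the reverse inclusion fails: limits of $\delta$-nodal curves may acquire worse singularities (extra nodes, cusps, nonreduced or reducible limits), and indeed already over $\DD^*$ the closure of $\V^*_{\{\L\},\delta}$ adds to each fiber the boundary of $V_{\{L_t\},\delta}(S_t)$, which is only locally closed in $\{L_t\}$. So your set-theoretic union is not the closure, and the closure is not the relative Severi variety. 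The object has to be defined directly as the locally closed subscheme of $\H$ cut out by the nodality conditions, as in \cite[Lemma 1.4]{CK}; it is precisely this description (each marked node imposes one local condition on the $(n+2)$-dimensional scheme $\H$) that carries the deformation-theoretic content.

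Second, your purity/domination argument is circular: you exclude a component contained in a fiber because ``that would contradict purity,'' but purity is what you are proving, and the fiberwise dimension count alone does not rule out a component of $\V_{\{\L\},\delta}$ equal to a component of a single $V_{\{L_{t_0}\},\delta}(S_{t_0})$, of dimension $n+1-\delta$. The missing step --- the actual content of the paper's proof, imported from \cite[Lemma 1.4]{CK} --- is the pointwise lower bound $\dim_x \V_{\{\L\},\delta} \geq n+2-\delta$ at every point, coming from the $\delta$ local conditions on $\H$ together with the fact (which you never address) that the nodes of a central-fiber curve lying on $\Sing(S_0)$ smooth when $S_0$ smooths. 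Combined with the equality $\dim V_{\{L_t\},\delta}(S_t)=n+1-\delta$ for every nonempty fiber (Proposition \ref{prop:rego}), this lower bound forces each component to have dimension exactly $n+2-\delta$ and to dominate $\DD$, and then smoothness follows from the smoothness of the fibers. Your final paragraph does invoke relative unobstructedness, which is the right idea and would also repair the circularity, but as written you deploy it only for smoothness, after a dimension count that already depends on it.
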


\begin{proof}
  The same argument as in \cite[Lemma 1.4]{CK} proves that every component of a Severi variety on a single surface belongs to a component of $\V_{\{\L\},\delta}
$ of dimension one more, thus dominating $\DD$. (We remark that the  nodes of a curve lying on the singular locus of $S_0$ smooth when $S_0$ smooths, cf., e.g., \cite{Ch,Ga}). Smoothness of $\V_{\{\L\},\delta}$ follows since all fibers are smooth by Proposition \ref{prop:rego}. 
 \end{proof}

\section{Variation of curves in moduli} \label{sec:modular}

In this section we will prove Theorem \ref{thm:moduli}.

For $n\geq 3$, let $A(1,n)$ be the fine moduli space of abelian surfaces with polarization of type $(1,n)$ and level structure of canonical type; for $n=2$, we define $A(1,2)$ as the moduli  space of abelian surfaces with polarization of type $(1,2)$ and a suitable level structure that makes it a fine moduli space. For $n\geq 2$, we consider the universal family of abelian surfaces $\phi_n:\mathcal{S}_n\to A(1,n)$. There exists a line bundle $\L_n$ on $\mathcal{S}_n$ whose restriction to the fiber  of $\phi_n$ over a point $(S,L,\alpha) \in A(1,n)$ is the $(1,n)$-polarization $L$ (and $\alpha$ is a level structure). For any $0\leq \delta\leq n-1$, let $\V_{n,\delta}$ be the universal Severi variety along with the projection $\pi_{n,\delta}: 
\V_{n,\delta} \to A(1,n)$ with fibers $V_{\{L\},\delta}(S)$. As in the proof of Corollary \ref{cor:esisterel}, one shows that the scheme
$\V_{n,\delta}$ is equidimensional of dimension $n+4-\delta= g+3$ (possibly with more components). We have a moduli map
\[
\xymatrix{
 \psi_{n,\delta}: \V_{n,\delta} \ar[r] & \M_g
}\]
mapping a curve to the class of its normalization and we set $\mathfrak{A}_{g,n}: =\im \psi_{n,\delta}$, as in the introduction. The fibers of $\psi_{n,\delta}$ restricted to any component of $\V_{n,\delta}$ are at least two-dimensional, due to the possibility of moving a curve by translation on each single surface. In fact, Theorem \ref{thm:moduli} will follow if we prove that a general fiber of $\psi_{n,\delta}$ restricted to some component of $\V_{n,\delta}$ has dimension $2$. 

First of all, we remark that for $g=2$ the map $\psi_{n,n-1}$ restricted to any component of $\V_{n,n-1}$ has two-dimensional fibers (and is thus dominant). This can be verified as follows. Any $[C]\in V_{\{L\},n-1}(S)$, with $(S,L)$ a $(1,n)$-polarized abelian surface, provides an isogeny $J(\widetilde{C})\to S$ of  bounded degree, where $J(\widetilde{C})$ is the Jacobian of the normalization $\widetilde{C}$ of $C$. It is then enough to use the finiteness of the number of isogenies of bounded degree from  $J(\widetilde{C})$ to some $(1,n)$-polarized abelian surfaces, along with the fact that any component of $V_{\{L\},n-1}(S)$ is two-dimensional by Proposition \ref{prop:rego}. 

By standard deformation theory, the regularity of $V_{\{L\},\delta}(S)$ implies that the nodes of any curve $[C] \in V_{\{L\},\delta}(S)$ can be smoothed independently; in particular, for any $0 \leq \delta' \leq \delta$,  the Severi variety $V_{\{L\},\delta}(S)$ is contained in the Zariski closure of $V_{\{L\},\delta'}(S)$ in $\{L\}$. As a consequence, there is a partial compactification of $\V_{n,\delta}$:
\begin{equation} \label{eq:partcomp} 
\overline{\V}_{n,\delta} := \cup_{j=\delta}^{n-1} \V_{n,j} .
\end{equation}
. 

Pick any component $\V^\dagger_{n,n-1}$ of $\V_{n,n-1}$. Then, for each $0 \leq \delta < n-1$, choose
components $\V_{n,\delta}^\dagger$ of $\V_{n,\delta}$ such that
\[ \V^\dagger_{n,n-1} \subset \overline{\V}^\dagger_{n,n-2} \subset \cdots \subset \overline{\V}^\dagger_{n,1} \subset \overline{\V}^\dagger_{n,0}, \]
where we let $\overline{\V}_{n,\delta}^\dagger$ be the partial compactification of 
$\V_{n,\delta}^\dagger$ induced by \eqref{eq:partcomp}. The next result proves Theorem \ref{thm:moduli}.

\begin{thm} \label{thm:modcasogen}
  A general fiber of the map $(\psi_{n,\delta})_{|\V_{n,\delta}^\dagger}$ is  two-dimensional. 
\end{thm}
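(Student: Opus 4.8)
The plan is to prove that a general fiber of $(\psi_{n,\delta})_{|\V_{n,\delta}^\dagger}$ is two-dimensional by downward induction on $\delta$, starting from the base case $\delta = n-1$ (i.e.\ $g=2$) already established in the text, and propagating the dimension count upward through the chain $\V^\dagger_{n,n-1} \subset \overline{\V}^\dagger_{n,n-2} \subset \cdots \subset \overline{\V}^\dagger_{n,0}$. The key structural fact is that each component $\V_{n,\delta}^\dagger$ has dimension $g+3 = n+4-\delta$, so a general fiber of $\psi_{n,\delta}$ of dimension exactly $2$ forces $\dim \mathfrak{A}_{g,n} = \dim\im\psi_{n,\delta} = (g+3)-2 = g+1$, which is the assertion of Theorem \ref{thm:moduli}. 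Since the fibers are always \emph{at least} two-dimensional (by translation on each surface, as noted), it suffices throughout to prove the \emph{upper} bound $\leq 2$.

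First I would set up the base case precisely. For $g=2$ and $\delta = n-1$, the argument sketched in the text shows that $(\psi_{n,n-1})_{|\V_{n,n-1}^\dagger}$ is dominant onto $\M_2$: a curve $[C]\in V_{\{L\},n-1}(S)$ yields an isogeny $J(\widetilde C)\to S$ of bounded degree, and for a fixed genus-$2$ Jacobian there are only finitely many such isogenies to $(1,n)$-polarized abelian surfaces; combined with the fact that each component of $V_{\{L\},n-1}(S)$ is two-dimensional (Proposition \ref{prop:rego}, since $\dim V = p-\delta = g = 2$), this gives that the general fiber over $\M_2$ is exactly two-dimensional. So the base case of the induction holds on $\V^\dagger_{n,n-1}$.

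For the inductive step, suppose a general fiber of $(\psi_{n,\delta+1})_{|\V^\dagger_{n,\delta+1}}$ is two-dimensional; I want to deduce the same for $(\psi_{n,\delta})_{|\V^\dagger_{n,\delta}}$. The mechanism is that $\V^\dagger_{n,\delta+1} \subset \overline{\V}^\dagger_{n,\delta}$, i.e.\ the $(\delta+1)$-nodal locus sits in the closure of the $\delta$-nodal component, obtained by specializing one of the smoothed nodes back to a node. A general curve $[\widetilde C]$ of geometric genus $g+1 = p-\delta-1$ in the image $\mathfrak{A}_{g+1,n}$ specializes to a nodal curve of geometric genus $g=p-\delta$ by acquiring one extra node. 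Concretely, over a general point of $\M_g$ in the image, I would compare the two fibers: the locus of $\delta$-nodal models restricts, by imposing one further node, to the locus of $(\delta+1)$-nodal models, and the key point is that \emph{acquiring a node is a codimension-one condition on the moduli of the normalization while the Severi variety drops dimension by exactly one}. Since both $\V^\dagger_{n,\delta}$ and $\V^\dagger_{n,\delta+1}$ have the matching dimensions $g+3$ and $(g+1)+3 = g+4$ (wait—recompute: $\dim\V_{n,\delta}=n+4-\delta$ and $\dim\V_{n,\delta+1}=n+3-\delta$, differing by one), and the target genus drops by one as $\delta$ increases by one, a dimension chase shows that the general fiber dimension is preserved at $2$ as one moves up the chain. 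The cleanest formulation is: if the general fiber over $\mathfrak A_{g+1,n}$ were two-dimensional but the general fiber over $\mathfrak A_{g,n}$ jumped to dimension $\geq 3$, then $\dim\mathfrak A_{g,n}\leq g$, contradicting the containment and a semicontinuity-of-fiber-dimension argument applied to the incidence between $\overline{\V}^\dagger_{n,\delta}$ and its sublocus $\V^\dagger_{n,\delta+1}$.

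\textbf{The main obstacle} I anticipate is controlling how the moduli map behaves under smoothing of a single node: one must ensure that smoothing a node in the Severi variety genuinely moves the normalization in $\M_g$ (rather than staying within a translation orbit or a positive-dimensional family of abelian surfaces), so that the fiber dimension does not jump when passing from $\delta+1$ to $\delta$. Equivalently, I need that the one extra degree of freedom gained by smoothing a node is \emph{not} absorbed into the fiber of $\psi$. This should follow from the regularity of the Severi varieties (Proposition \ref{prop:rego}), which guarantees that nodes smooth independently and transversally, together with the universal Severi variety $\V_{n,\delta}$ being equidimensional of the expected dimension; the regularity ensures the local structure of $\overline{\V}^\dagger_{n,\delta}$ near $\V^\dagger_{n,\delta+1}$ is as expected, so the dimension bookkeeping is rigorous. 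A secondary technical point is the finiteness input from the base case—that each general genus-$2$ curve arises from only finitely many surfaces up to translation—which must be propagated through the induction to conclude that a general curve in each $\mathfrak A_{g,n}$ lies on only finitely many $(1,n)$-polarized abelian surfaces, giving the final clause of Theorem \ref{thm:moduli}.
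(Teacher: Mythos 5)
Your overall strategy---reduce to the genus-two case and conclude by upper semicontinuity of fiber dimension---is the same engine the paper uses, and your base case is correctly identified. But the inductive step has a genuine gap. To compare the general fiber of $\psi_{n,\delta}$ with its fiber ``at'' a point of $\V^\dagger_{n,\delta+1}\subset\overline{\V}^\dagger_{n,\delta}$, you need $\psi_{n,\delta}$ to be defined there, and it is not: a curve with more than $\delta$ nodes has no canonical genus-$g$ partial normalization (one must choose which $\delta$ nodes to normalize), and the resulting partial normalization is still singular, so any extension of the moduli map must take values in $\overline{\M}_g$ rather than $\M_g$. The paper supplies exactly this missing ingredient: a scheme $\mathcal{W}_{n,\delta}$ of pairs $(C,N)$, with $C\in\V^\dagger_{n,\delta'}$ for some $\delta'\geq\delta$ and $N$ a subset of $\delta$ of the nodes of $C$, containing $\V^\dagger_{n,\delta}$ as a dense open subset, together with an extended moduli map $\tilde{\psi}_{n,\delta}\colon\mathcal{W}_{n,\delta}\to\overline{\M}_g$ sending $(C,N)$ to the partial normalization of $C$ at $N$ (following \cite{Tan,CFGK}). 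Your phrase about ``the incidence between $\overline{\V}^\dagger_{n,\delta}$ and its sublocus $\V^\dagger_{n,\delta+1}$'' gestures at this, but without constructing $\mathcal{W}_{n,\delta}$ and $\tilde{\psi}_{n,\delta}$ the semicontinuity argument has no morphism to apply to; that construction is the actual content of the proof. A bookkeeping error signals the confusion: with $g=p-\delta$, curves in $\V_{n,\delta+1}$ have geometric genus $g-1$, not $g+1$ (acquiring a node lowers the geometric genus), and $\mathfrak{A}_{g-1,n}$ and $\mathfrak{A}_{g,n}$ live in different moduli spaces, so the ``containment'' you invoke for the contradiction does not exist as stated.

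Once $\mathcal{W}_{n,\delta}$ and $\tilde{\psi}_{n,\delta}$ are in place, the step-by-step induction is also unnecessary: the paper specializes in a single stroke to a pair $(C,N)$ with $C$ a general member of $\V^\dagger_{n,n-1}$ and $N$ any $\delta$ of its $n-1$ nodes. The fiber of $\tilde{\psi}_{n,\delta}$ over such a point consists of curves whose full normalization is the fixed genus-two curve $\widetilde{C}$, so the isogeny/finiteness argument in genus two bounds that fiber by two directly, and semicontinuity, combined with the lower bound coming from translations, finishes the proof.
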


\begin{proof}
As in \cite{Tan,CFGK} one can prove the existence of a scheme
\[ \mathcal{W}_{n,\delta}:=\Big\{ (C,N) \; | \; C \in  \V_{n,\delta'}^\dagger \; \mbox{for some $\delta' \geq \delta$ and $N$ is a subset of $\delta$ of the nodes of $C$} \Big\}. \]
The scheme $\V_{n,\delta}^\dagger$ can be identified with a dense open subset of 
$\mathcal{W}_{n,\delta}$. We also have an extended moduli map $\tilde{\psi}_{n,\delta}: \mathcal{W}_{n,\delta} \to \overline{\M}_g$, 
mapping a pair $(C,N)$ to the class of the partial normalization of $C$ at $N$ (where $\overline{\M}_g$ is the Deligne-Mumford compactification of $\M_g$).
The result will follow if we prove that a general fiber of $ \tilde{\psi}_{n,\delta}$ is at most two-dimensional. Take a general curve $C \in \V_{n,n-1}^\dagger$
and choose a subset $N$ of $\delta$ of its nodes. Then $(C,N) \in \mathcal{W}_{n,\delta}$. By the result in genus two, the fiber over $\tilde{\psi}_{n,\delta}((C,N))$ is (at most) two-dimensional, and the result follows by semicontinuity.
\end{proof}

We remark that, as in the $K3$ case, the following very interesting questions are still open: 

\begin{question} For a general  $(1,n)$-polarized abelian surface $(S,L)$, is the Severi variety $V_{\{L\},\delta}(S)$ irreducible? Is the universal Severi variety $\V_{n,\delta}$ irreducible? 
\end{question}

\section{Linear series on curves on abelian surfaces} \label{sec:lser}

\subsection{The Brill-Noether loci} 
Given a surface $S$ (possibly having normal crossing singularities), the moduli morphism
\begin{equation}\label{eq:uno}
\xymatrix{\psi_{S, \{L\},\delta} :V_{\{L\}, \delta}(S) \ar[r] &
\overline{\M}_g}
\end{equation}
assigns to a curve $C\in V_{\{L\},\delta}(S)$ the isomorphism class of the stable model $\overline C$
of its partial normalization $\widetilde C$ at its $\delta$ marked nodes.  We sometimes simply write $\psi$ to ease notation.

Having fixed two integers $r\geq 1$ and $d\geq 2$, one defines the Brill-Noether locus
$$\M^r_{g,d}:=\{[C]\in\M_g\,\vert\,G^r_d(C)\neq\emptyset\}.$$
This coincides with $\M_g$ if and only if the Brill-Noether number $\rho(g,r,d):=g-(r+1)(g-d+r)$ is nonnegative; if $\rho(g,r,d)<0$, then the codimension of $\M^r_{g,d}$ inside $\M_g$ is at most $-\rho(g,r,d)$ by \cite{St}.

If $(S,L)$ is a (possibly degenerate) $(1,n)$-polarized abelian surface, we define the scheme
$$
\{L\}^r_{\delta,d}:=\{C \in V_{\{L\}, \delta}(S) \; | \;
\psi(C)\in \overline{\M}^r_{g,d}\Big \},
$$
where  $\overline{\M}^r_{g,d}$ is the Zariski closure of $\M^r_{g,d}$ in $\overline{\M}_g$. We also set
 $$|L|^r_{\delta,d}:=\{L\}^r_{\delta,d}\cap |L|.$$
 When $\delta=0$, we simplify notation and denote by $|L|^r_{d}$ and $\{L\}^r_d$ the Brill-Noether loci of smooth curves in $|L|$ and $\{L\}$, respectively. 

\begin{proposition} \label{prop:expdimvk}
Let $(S,L)$ be a possibly degenerate $(1,n)$-polarized abelian surface. The \linebreak  expected dimension of every irreducible component of $|L|^r_{\delta,d}$ (respectively $\{L\}^r_{\delta,d}$) equals \linebreak $\min\{g-2,g-2+\rho(g,r,d)\}$ (resp. $\min\{g,g+\rho(g,r,d)\}$). 
\end{proposition}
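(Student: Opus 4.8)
The plan is to realize $\{L\}^r_{\delta,d}$ as the image of the relative Brill--Noether scheme over the Severi variety and to read off its expected dimension from the determinantal structure of Brill--Noether loci. I first record the relevant dimensions. Since $\omega_S$ is trivial---both on a smooth abelian surface and on $S_0$---Proposition \ref{prop:rego} together with Lemma \ref{lemma:v0} guarantees that, whenever nonempty, $V_{\{L\},\delta}(S)$ is smooth of dimension $p-\delta=g$ and $V_{|L|,\delta}(S)$ is smooth of dimension $p-2-\delta=g-2$. As the two ambient Severi varieties differ only by the constant shift $2=\dim\{L\}-\dim|L|$, it is enough to establish the formula for $\{L\}^r_{\delta,d}=\psi^{-1}(\overline{\M}^r_{g,d})$; the statement for $|L|^r_{\delta,d}$ then follows verbatim upon replacing $g$ by $g-2$.

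If $\rho(g,r,d)\geq 0$, the classical Brill--Noether theorem gives $\M^r_{g,d}=\M_g$, whence $\overline{\M}^r_{g,d}=\overline{\M}_g$ and $\{L\}^r_{\delta,d}=\psi^{-1}(\overline{\M}_g)=V_{\{L\},\delta}(S)$ has dimension $g=\min\{g,g+\rho(g,r,d)\}$. Assume now $\rho(g,r,d)<0$. Composing the moduli map $\psi$ with the universal family over $\overline{\M}_g$, I would form the relative Brill--Noether scheme $q\colon\mathcal{G}^r_{\delta,d}\to V_{\{L\},\delta}(S)$ whose fiber over $C$ is $G^r_d(\overline C)$. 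The standard determinantal construction of such schemes in families (see \cite{EH1,EH2} for the treatment over $\overline{\M}_g$ via limit linear series) presents $\mathcal{G}^r_{\delta,d}$ locally, inside an auxiliary Grassmann bundle, as the locus where a morphism of vector bundles drops rank; consequently every component has dimension at least $\dim V_{\{L\},\delta}(S)+\rho(g,r,d)=g+\rho(g,r,d)$, and this is precisely its expected dimension. For $\rho(g,r,d)<0$ the general member of any component of the image $\{L\}^r_{\delta,d}=q(\mathcal{G}^r_{\delta,d})$ is expected to carry only finitely many $g^r_d$'s, so the general fiber of $q$ over its image is expected to be $0$-dimensional; the expected dimension of each component of $\{L\}^r_{\delta,d}$ therefore equals that of $\mathcal{G}^r_{\delta,d}$, namely $g+\rho(g,r,d)=\min\{g,g+\rho(g,r,d)\}$. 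Since $V_{\{L\},\delta}(S)$ is smooth of constant dimension and $\rho(g,r,d)$ is fixed, this count is the same at a general point of every irreducible component, giving the asserted uniform value.

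The step I expect to be the main obstacle is the passage to singular stable models: when $(S,L)$ is degenerate the curves $\overline C=\psi(C)$ are genuinely nodal (cf. Lemma \ref{lemma:stablemodel}), so the smooth-curve determinantal theory does not apply verbatim. One must instead run the degeneracy-locus construction for the universal family over a suitable cover of $\overline{\M}_g$, working with limit linear series or with rank-one torsion-free sheaves, and verify both that the lower bound $g+\rho(g,r,d)$ on the dimension of every component of $\mathcal{G}^r_{\delta,d}$ persists and that the generic fiber of $q$ keeps the expected dimension $\max\{0,\rho(g,r,d)\}$. Granting this standard but technical input, the dimension bookkeeping above delivers the stated expected dimensions.
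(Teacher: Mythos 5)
There is a genuine gap, and it sits exactly where you wave your hands. Your plan is to get a lower bound $g+\rho(g,r,d)$ on every component of the relative Brill--Noether scheme $\mathcal{G}^r_{\delta,d}$ from its determinantal structure, and then to transfer that bound to the image $\{L\}^r_{\delta,d}=q(\mathcal{G}^r_{\delta,d})$ by asserting that the general fiber of $q$ is ``expected to be'' $0$-dimensional. But the determinantal lower bound lives on the total space, not on the image: if some component of the image consists of curves carrying a positive-dimensional family of $g^r_d$'s, the image drops below $g+\rho(g,r,d)$ even though the total space does not, and nothing in your argument rules this out. This is precisely the issue that makes Steffen's theorem (that every component of $\M^r_{g,d}$ has codimension at most $-\rho$ in $\M_g$ when $\rho<0$) a nontrivial ``generalized principal ideal theorem'' rather than a formal consequence of the degeneracy-locus bound on $\G^r_d$. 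Since the proposition is later used through the chain of inequalities it establishes (all of which must become equalities in the proof of Theorem \ref{thm:main5}), the content really is an actual lower bound on each component, and your count does not deliver it. A secondary, also unresolved, issue is that $\{L\}^r_{\delta,d}$ is defined by the condition $\psi(C)\in\overline{\M}^r_{g,d}$ (closure in $\overline{\M}_g$), which for the singular stable models $\overline{C}$ is not the same as $G^r_d(\overline{C})\neq\emptyset$; you flag this but defer it as ``standard but technical input,'' so the identification of $q(\mathcal{G}^r_{\delta,d})$ with the locus in the statement is also missing.

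The paper's proof sidesteps both problems with a two-line intersection count downstairs: for a component $Z$ with $\psi(Z)$ a component of $\M\cap\psi(V)$ ($\M$ a component of $\overline{\M}^r_{g,d}$, $V$ the ambient Severi component), one has $\dim Z\geq\dim\psi(Z)+s\geq\dim\psi(V)-\codim_{\overline{\M}_g}\M+s=\dim V-\codim_{\overline{\M}_g}\M$, where $s$ is the general fiber dimension of $\psi|_V$, and then one quotes Steffen's bound $\codim_{\overline{\M}_g}\M\leq-\rho(g,r,d)$. In other words, the hard fiber-dimension control you would need is outsourced to the cited theorem of Steffen, applied in $\M_g$ and pulled back along the moduli map; no relative Brill--Noether scheme over the Severi variety, and no limit linear series, are needed. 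If you want to salvage your route, you would have to prove a relative analogue of Steffen's result over $V_{\{L\},\delta}(S)$, which is substantially harder than the statement you are trying to prove.
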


\begin{proof}
 Consider the moduli morphism $\psi$ in \eqref {eq:uno}.
Let $Z$ be an irreducible component of $|L|^r_{\delta,d}$ such that $\psi(Z)$ is a component of $\M\cap \psi(V)$, where $\M$ and $V$ are irreducible components of $\overline{\M}^r_{g,d}$ and $V_{\{L\},\delta}(S)$, respectively. If $s$ is the dimension of a general fiber of $\psi|_V$, then
\begin{align}\label{muffin}
\dim Z\geq \dim\psi(Z)+s\geq \dim\psi(V)-\codim_{\overline{\M}_g} \M+s & =\dim V-\codim_{\overline{\M}_g} \M\\\nonumber &\geq\dim V+ \min\{\rho(g,r,d), 0\}.
\end{align}
 The statement for $|L|^r_{\delta,d}$ then follows because $\dim  V=g-2$. The proof for $\{L\}^r_{\delta,d}$ is very similar. 
\end{proof}

\begin{remark}\label{pencil}
When $r=1$, the scheme $|L|^1_{\delta,k}$ is called the {\em $k$-gonal locus}. This  case is quite special. For instance, if $\rho(g,1,k)<0$, the Brill-Noether locus $\M^1_{g,k}$ is known to be irreducible of the expected dimension $2g+2k-5$. Furthermore, a curve $C$ lies in $|L|^1_{\delta,k}$ if and only if the partial normalization $\widetilde C$ of $C$ 
at its $\delta$ marked nodes is stably equivalent to a curve that is the domain of an admissible cover of degree $k$ to a stable pointed curve of genus $0$  \cite[Thm. 3.160]{HM}.
\end{remark}

When $r=1$ and $(S,L)$ is general, we can  bound the dimension of the $k$-gonal locus from above, even for curves with arbitrary singularities. 

\begin{thm} \label{thm:boundingdim}
  Let $(S,L)$ be a polarized abelian surface such that $[L] \in \NS(S)$ has no decomposition into nontrivial effective classes. Assume that $V \sub \{L\}$ is a nonempty reduced scheme parametrizing a flat family of irreducible curves of geometric genus $g$ whose normalizations possess linear series of type $g^1_k$. Let $\widetilde{C}$ be the normalization of a general curve parametrized by the family. Then
\begin{equation}
  \label{eq:bounddim1}
 \dim V + \dim G^1_k(\widetilde{C}) \leq 2k-2. 
\end{equation}

In particular,
\begin{equation}
  \label{eq:bounddim2}
 \dim V \leq \min \{g,2k-2\}. 
\end{equation}
\end{thm}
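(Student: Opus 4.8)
The plan is to bound $\dim V$ by relating the family of $k$-gonal curves on $S$ to a family of rational curves on the generalized Kummer variety $K^{[k-1]}(S)$, and then to control the latter via Mori's bend-and-break together with the results of Amerik--Verbitsky \cite{AV}. First I would set up the correspondence: a general curve $C$ in the family has normalization $\widetilde{C}$ of genus $g$ carrying a $g^1_k$, i.e.\ a degree $k$ pencil. A fiber of such a pencil is an effective divisor of degree $k$ on $\widetilde{C}$; pushing forward to $C$ and then to $S$, each fiber determines a length-$k$ zero-dimensional subscheme of $S$, hence a point of the Hilbert scheme $S^{[k]}$. As the fiber varies in the pencil $\cong \PP^1$, these points trace out a rational curve in $S^{[k]}$. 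The hypothesis that $[L]$ has no decomposition into nontrivial effective classes is exactly what is needed to ensure the relevant subschemes sum (under the addition map $S^{[k]}\to S$) to a \emph{fixed} point of $S$, so that the rational curve in fact lies in a fiber of $S^{[k]}\to S$, which is (a translate of) the generalized Kummer variety $K^{[k-1]}(S)$; this variety is hyperk\"ahler of dimension $2(k-1)=2k-2$.

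Next I would estimate the dimension of the resulting family of rational curves on $K:=K^{[k-1]}(S)$. Moving $C$ in $V$ and moving the pencil in $G^1_k(\widetilde{C})$ each produces independent deformations of the associated rational curve, so the family of rational curves has dimension at least $\dim V + \dim G^1_k(\widetilde{C})$, up to the automorphisms and the choice of parametrization that one must carefully quotient out. The key input is then a dimension bound for families of rational curves on a hyperk\"ahler manifold. By Mori's bend-and-break, a sufficiently large family of rational curves through a point forces the curves to degenerate; combined with the Amerik--Verbitsky analysis of deformations of rational curves on hyperk\"ahler manifolds, one obtains that any family of rational curves on the $2(k-1)$-dimensional manifold $K$ has dimension at most $2(k-1)$. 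Feeding this back yields
\begin{equation*}
\dim V + \dim G^1_k(\widetilde{C}) \leq 2k-2,
\end{equation*}
which is \eqref{eq:bounddim1}.

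Finally, \eqref{eq:bounddim2} follows formally: since $\dim G^1_k(\widetilde{C})\geq 0$ always, \eqref{eq:bounddim1} immediately gives $\dim V \leq 2k-2$; and independently $\dim V \leq \dim\{L\} - (\text{parameters})$ together with the fact that the moduli image lies in $\M_g$ (whose curves have $3g-3$ moduli but the family on a fixed surface is constrained) gives $\dim V\leq g$, so combining the two bounds yields $\dim V\leq\min\{g,2k-2\}$. I expect the main obstacle to be the middle paragraph: making precise the passage from the abstract family $V$ together with the Brill-Noether data to an honest family of rational curves on $K$ of the \emph{correct} dimension, and invoking the bend-and-break plus Amerik--Verbitsky bound in a way that accounts for all automorphisms and reparametrizations without losing or gaining dimensions. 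The role of the indecomposability hypothesis on $[L]$ in confining the rational curves to a single generalized Kummer fiber is the conceptual crux that must be handled with care.
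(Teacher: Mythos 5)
Your overall strategy is the one the paper follows: convert pencils on the normalizations into rational curves on the generalized Kummer variety and bound the resulting family using bend-and-break and the Amerik--Verbitsky argument. However, two of your key intermediate claims are incorrect, and your final inequality only comes out right because the two errors (each of size $2$) happen to cancel. First, the indecomposability of $[L]$ is \emph{not} what confines the rational curves to a fiber of the summation map $\Sigma\circ\mu:\Hilb^k(S)\to S$; that confinement is automatic, because an abelian surface contains no rational curves, so every rational curve in $\Hilb^k(S)$ is contracted by $\Sigma\circ\mu$. The hypothesis on $[L]$ enters later, inside the bend-and-break step: if infinitely many curves of the family passed through a general point $\xi$ of the swept locus and a general point $\xi'$ of the subvariety $X_\xi$, bend-and-break would produce an algebraically equivalent cycle $R_0$ with either a nonreduced component or two distinct components through $\xi$ and $\xi'$; pushing $R_0$ down to $S$ via the incidence correspondence $I\subset S\times K^{[k-1]}(S)$ yields a member of $\{L\}$ that is nonreduced or decomposable, contradicting the hypothesis. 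Without this finiteness claim the dimension count of \cite[Thm.~4.4]{AV} does not get started, so identifying where the hypothesis acts is not a cosmetic point.

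Second, the bookkeeping. The family of rational curves of dimension $\dim V+\dim G^1_k(\widetilde C)$ lives in $\Hilb^k(S)$ and is spread over \emph{all} fibers of $\Sigma\circ\mu$: translating $C$ in $S$ moves the associated rational curve transversally to the fibers, so the family inside a \emph{single} fiber $K^{[k-1]}(S)$ has dimension $\dim V-2+\dim G^1_k(\widetilde C)$. Moreover, the bound you invoke --- that a family of rational curves on the $2(k-1)$-dimensional hyperk\"ahler manifold $K^{[k-1]}(S)$ has dimension at most $2(k-1)$ --- is not the relevant statement; the combination of the lower bound $\dim K^{[k-1]}(S)-2=2k-4$ from \cite[Cor.~5.2]{ran} with bend-and-break and \cite[Thm.~4.4]{AV} shows that the family has dimension \emph{exactly} $2k-4$. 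With the correct accounting, your stated input would only yield $\dim V+\dim G^1_k(\widetilde C)\le 2k$, which is too weak; the theorem follows from $\dim V-2+\dim G^1_k(\widetilde C)\le 2k-4$. Two smaller points: the bound $\dim V\le g$ is not the formal parameter count you sketch but is quoted from \cite[Prop.~4.16]{DS}; and one must first reduce to base point free pencils (possibly shrinking $V$ and lowering $k$) so that the rational curves avoid $\Sing(\Sym^k(S))$ and the Hilbert--Chow exceptional locus, which is needed in the bend-and-break contradiction.
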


\begin{proof}
 Possibly after shrinking $V$ and diminishing $k$, we may without loss of generality assume that all linear series of type $g^1_k$ in question are base point free. 
Let $\C \to V$ be the universal family. Normalizing $\C$, and possibly after restricting to an open dense subscheme of $V$, we obtain a flat family $\widetilde{\C} \to V$ of smooth, irreducible curves possessing linear series of type $g^1_k$ \cite[Thm. 1.3.2]{tes} and a natural morphism 
$\Sym^k(\widetilde{\C}/V) \to \Sym^k(S)$, where $\Sym^k(\widetilde{\C}/V)$ is the relative symmetric product. Any $g^1_k$ on the normalization of a curve in the family defines a rational curve inside $\Sym^k(\widetilde{\C}/V)$.  All such curves are mapped to distinct, rational curves in $ \Sym^k(S)$  (cf. e.g. \cite{CK});  moreover, as the linear series are base point free, none of the curves are  contained in the singular locus $\Sing(\Sym^k(S))$.  Via the Hilbert-Chow morphism
$\mu: \Hilb^k(S) \to \Sym^k(S)$ we obtain a flat family of rational curves in $\Hilb^k(S)$ of dimension $\dim V+\dim G^1_k(\widetilde{C})$, with $\widetilde{C}$ as in the statement. Any rational curve is contracted by the composed morphism
$\Sigma \circ \mu$, where $\Sigma:\Sym^k(S) \to S$ is the summation map, whence it lies in some fiber $K_x^{[k-1]}(S):=(\Sigma \circ \mu)^{-1}(x) \subset \Hilb^k(S)$, $x \in S$, which is well-known to be a smooth hyperk{\"a}hler manifold of dimension $2(k-1)$ (all $K_x^{[k-1]}(S)$ are isomorphic). At the same time, having fixed a curve $[C]\in V$ and a $g^1_k$ on its normalization, one obtains a rational curve in any fiber of $\Sigma \circ \mu$ by suitably translating $C$ in $S$. Hence we get a flat family of rational curves in $K^{[k-1]}(S):=K_{e_0}^{[k-1]}(S)$  of dimension $\dim V-2+\dim G^1_k(\widetilde{C})$. 

We borrow an argument from \cite{AV}.
Let $X \subset K^{[k-1]}(S)$ be the closure of the locus covered by the rational curves in our family and  $a$ its codimension. By \cite[Cor. 5.2]{ran}, the dimension of our family is at least $\dim K^{[k-1]}(S)-2=2k-4$. Hence, through a general point $\xi \in X$, there is a family of rational curves of dimension at least $a-1$. 
Denote by $X_{\xi} \subset X$ the closure of the locus they cover. 

\begin{claim}\label{prosecco}
 Only finitely many rational curves of the family pass through $\xi$ and a general $\xi'\in X_{\xi}$.  
\end{claim}

Granting this, we have that $\dim X_{\xi} \geq a$. We can now proceed as in the last part of the proof of \cite[Thm. 4.4]{AV} and conclude that $\dim X_{\xi}=a$. A posteriori, the dimension of the family of curves through $\xi$ is precisely $a-1$ and that of the total family is $2k-4$. It follows that $\dim V-2+\dim G^1_k(\widetilde{C}) \leq 2k-4$, proving \eqref{eq:bounddim1}. Inequality \eqref{eq:bounddim2} is immediate, since $\dim V \leq g$ by \cite[Prop. 4.16]{DS}.

We are left with proving Claim \ref{prosecco}. To this end, consider the incidence scheme 
\[ I:= \{ (P,Z) \; | \; P \in \Supp Z\} \subset S \x K^{[k-1]}(S), \]
with its two projections
\[
\xymatrix{
I \ar[r]^{\hspace{-0.7cm}\alpha} \ar[d]_{\beta} & K^{[k-1]}(S) \\
S.
}
\]
The map $\alpha$ is finite of degree $k$.  Let $R \subset K^{[k-1]}(S)$ denote a rational curve determined by a $g^1_k$ on  $\widetilde{C}$. The map $\beta$ is generically one-to-one on $\alpha^{-1}(R)$ and $\beta(\alpha^{-1}(R))=
\beta_*(\alpha^{-1}(R))=C$.

If the claim were false, then by Mori's bend-and-break technique $R$ would be algebraically equivalent to a curve $R_0  \subset X_{\xi}$ having either a nonreduced component $R^{(0)}_0$ passing through both $\xi$ and $\xi'$ or two distinct components $R^{(1)}_0$ and $R^{(2)}_0$ passing through $\xi$ and $\xi'$, respectively (see \cite[Pf. of Lemma 1.9]{komo}). Since $\xi, \xi' \not \in \Exc \mu \cap  K^{[k-1]}(S)$, the support of the zero schemes parametrized by $R^{(j)}_0$, for $j=0,1,2$, spans a curve on $S$, so that $\beta(\alpha^{-1}(R^{(j)}_0))$ is a curve on $S$. Hence $\beta_*(\alpha^{-1}(R_0)) \equiv C \in \{L\}$ contains either a nonreduced component or two distinct components, a contradiction.
\end{proof}

 Using the fact that the condition on $L$ is open in the moduli space of polarized abelian surfaces, and combining with Proposition \ref{prop:expdimvk}, we obtain the following:

\begin{cor} \label{cor:tuttecompdim}
   Let $(S,L)$ be a general $(1,n)$-polarized abelian surface. If $\{L\}^1_{\delta,k} \neq \emptyset$, then each component has dimension $\min \{g,2k-2\}$ and the normalization $\widetilde{C}$ of a general curve therein satisfies $\dim G^1_k(\widetilde{C})=\max\{0,\rho(g,1,k)\}$. 
 \end{cor}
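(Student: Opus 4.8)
The plan is to sandwich the dimension of each component of $\{L\}^1_{\delta,k}$ between the upper bound furnished by Theorem \ref{thm:boundingdim} and the lower bound furnished by Proposition \ref{prop:expdimvk}, and then to extract the statement on $\dim G^1_k(\widetilde{C})$ by a short numerical comparison. Since the genuine geometric content is already contained in Theorem \ref{thm:boundingdim}, this corollary amounts essentially to bookkeeping once the hypotheses of that theorem are checked for general $(S,L)$.

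First I would verify that a general $(1,n)$-polarized abelian surface satisfies the hypothesis of Theorem \ref{thm:boundingdim}, namely that $[L]\in \NS(S)$ admits no decomposition into nontrivial effective classes. For a general such surface one has $\NS(S)=\ZZ[L]$, so $[L]$ is primitive and no such decomposition can exist; as this is an open condition in the moduli space of polarized abelian surfaces, it propagates to the general member. Let $V$ be any component of $\{L\}^1_{\delta,k}$, endowed with its reduced structure and possibly restricted to a dense open subscheme so as to guarantee flatness. Its general member is an irreducible curve of geometric genus $g$ whose normalization carries a $g^1_k$, so Theorem \ref{thm:boundingdim} applies and yields both $\dim V+\dim G^1_k(\widetilde{C})\leq 2k-2$ and, in particular, $\dim V\leq \min\{g,2k-2\}$.

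Next I would invoke Proposition \ref{prop:expdimvk}, whose proof shows that every component of $\{L\}^1_{\delta,k}$ has dimension at least its expected value $\min\{g,g+\rho(g,1,k)\}$. Using $\rho(g,1,k)=2(k-1)-g$ one gets $g+\rho(g,1,k)=2k-2$, so this lower bound is precisely $\min\{g,2k-2\}$. Combined with the upper bound above, this forces $\dim V=\min\{g,2k-2\}$ on every component. The statement on $\dim G^1_k(\widetilde{C})$ then follows by case analysis on the sign of $\rho(g,1,k)$, using the inequality $\dim V+\dim G^1_k(\widetilde{C})\leq 2k-2$ together with the classical Brill--Noether bound $\dim G^1_k(\widetilde{C})\geq \rho(g,1,k)$, valid since $\widetilde{C}$ carries a $g^1_k$. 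If $\rho(g,1,k)\geq 0$ then $\dim V=g$, whence $\dim G^1_k(\widetilde{C})\leq 2k-2-g=\rho(g,1,k)$ and equality holds; if $\rho(g,1,k)<0$ then $\dim V=2k-2$, so $\dim G^1_k(\widetilde{C})\leq 0$ and thus $\dim G^1_k(\widetilde{C})=0$. In both cases $\dim G^1_k(\widetilde{C})=\max\{0,\rho(g,1,k)\}$.

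I do not expect a serious obstacle here, precisely because the hard work has been done in Theorem \ref{thm:boundingdim}. The only two points requiring care are the passage from an arbitrary general $(S,L)$ to the condition on $\NS(S)$ via openness, and the numerical identity $g+\rho(g,1,k)=2k-2$ which is what makes the upper and lower bounds meet; everything else is a formal combination of the two cited results.
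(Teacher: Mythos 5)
Your argument is correct and is precisely the one the paper intends: the corollary is stated immediately after Theorem \ref{thm:boundingdim} with the remark that it follows ``using the fact that the condition on $L$ is open in the moduli space of polarized abelian surfaces, and combining with Proposition \ref{prop:expdimvk}'', i.e.\ the upper bound from Theorem \ref{thm:boundingdim} meets the lower bound $\min\{g,g+\rho(g,1,k)\}=\min\{g,2k-2\}$ coming from the proof of Proposition \ref{prop:expdimvk}, and the statement on $\dim G^1_k(\widetilde{C})$ falls out of $\dim V+\dim G^1_k(\widetilde{C})\leq 2k-2$ together with the classical bound $\dim G^1_k(\widetilde{C})\geq\rho(g,1,k)$. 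Your write-up just makes explicit the bookkeeping the authors leave implicit.
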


The above corollary proves Theorem \ref{thm:main2}(ii). In the next section, we will provide an independent proof of (ii) for {\it one} component of $|L|^1_{\delta,k}$ by degeneration, which also yields part (iii) of the theorem.
As a further consequence, we can prove Theorem 
\ref{thm:conjDS}.

\renewcommand{\proofname}{Proof of Theorem \ref{thm:conjDS}}

\begin{proof}
  Let $V_{g} \sub \{L\}$ be any component of the locus parametrizing irreducible curves of geometric genus $g$. Then $\dim V_{g}=g$ by \cite[Prop. 4.16]{DS}.
Therefore, by Theorem \ref{thm:boundingdim}, a general curve parametrized by 
$V_{g}$ has nontrigonal normalization as soon as $g\geq 5$. By  \cite[Prop. 4.16]{DS}, it must therefore be nodal.
\end{proof}

\renewcommand{\proofname}{Proof}

\begin{remark} \label{rem:ancheK3}
Theorem \ref{thm:boundingdim} stills holds with basically the same proof if one replaces $(S,L)$ with a general primitively polarized $K3$ surface. As a consequence, the same applies to Corollary \ref{cor:tuttecompdim} and Theorem \ref{thm:conjDS}; in particular, the former implies that \cite[Thm 0.1(ii)]{CK} holds on any component of the $k$-gonal locus, and the latter improves \cite[Prop. 4.10]{DS}.

\end{remark}

\subsection{Necessary conditions for nonemptiness of Brill-Noether loci} \label{ssec:nec}

 The next result proves Theorem \ref{thm:main4}(iii), as the condition on $[C]\in \NS(S)$ is open in the moduli space of  abelian surfaces.

\begin{thm} \label{thm:necex}
  Let $C$ be a reduced and irreducible curve of arithmetic genus $p=p_a(C)$ on an abelian surface $S$ such that $[C]\in \NS(S)$ has no decomposition into nontrivial effective classes. Assume that $C$ possesses a torsion free rank one sheaf $\A$ such that $\deg \A=d$ and $h^0(\A)=r+1$. Then one has:
  \begin{equation}
    \label{eq:rhocond}
   \rho(p,r,d) \geq -r(r+2), 
  \end{equation}
or equivalently,
\begin{equation}\label{liscio}
(r+1)d\geq r(p-1).
\end{equation}
\end{thm}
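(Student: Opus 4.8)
The plan is to produce a vector bundle on the abelian surface $S$ whose Chern data encode the sheaf $\A$, and then to extract the inequality from the nonnegativity of an appropriate discriminant. First I would fix an embedding, i.e. view $\A$ as a torsion free rank one sheaf on $C\subset S$ with $\chi(\A)=d-p+1=d-n$ (here $p=n+1$) and $h^0(\A)=r+1$; by Serre duality on $C$, using that $\omega_C=\O_C$ (since $S$ is abelian and $[C]^2$ is irrelevant, $\omega_C=\O_C(C)|_C$ has degree $2p-2$ but $\omega_C\cong\O_C\otimes\O_S(C)|_C$---more carefully $\omega_C=\O_S(C)|_C$ as $\omega_S=\O_S$), one gets $h^1(\A)=h^0(\omega_C\otimes\A^\vee)$. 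I would then form the Lazarsfeld--Mukai type sheaf by taking the evaluation of global sections. Concretely, letting $V=H^0(C,\A)$, consider the sheaf $F$ defined as the kernel of the evaluation map $V\otimes\O_S\twoheadrightarrow \A$ (pushed forward to $S$), so that one obtains a short exact sequence
\begin{equation}\label{eq:LMseq}
0\longrightarrow F\longrightarrow V\otimes\O_S\longrightarrow \A\longrightarrow 0
\end{equation}
of sheaves on $S$, where $\A$ is regarded as a sheaf on $S$ supported on $C$. Dualizing, or rather working with $F^\vee$, should produce a sheaf $E$ on $S$ of rank $r+1$ with $c_1(E)=[C]$ and computable $c_2$.

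The key computation is the Chern character of $E$. From \eqref{eq:LMseq} one reads off $\rk F=r+1$, $c_1(F)=-[C]$, and $c_2$ determined by Riemann--Roch on the surface applied to $\A$: I would use $\chi(\A)=d-n$ together with $\chi(V\otimes\O_S)=(r+1)\chi(\O_S)=0$ (since $\chi(\O_S)=0$ for an abelian surface) to pin down $\chi(F)$, and hence $c_2(F)$ via Riemann--Roch $\chi(F)=\tfrac12 c_1(F)^2-c_2(F)$. Setting $E:=F^\vee$ (rank $r+1$, $c_1(E)=[C]$, $c_2(E)=c_2(F)$), the condition $h^0(\A)=r+1$ combined with the primitivity/indecomposability hypothesis on $[C]\in\NS(S)$ should force $E$ (or a suitable subsheaf) to be \emph{slope-semistable} with respect to the polarization. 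The heart of the matter is then the \emph{Bogomolov inequality}: for a slope-semistable sheaf $E$ of rank $\rk$ on a surface,
\begin{equation}\label{eq:bogo}
2\,\rk(E)\,c_2(E)-(\rk(E)-1)\,c_1(E)^2\ \geq\ 0.
\end{equation}
Substituting $\rk(E)=r+1$, $c_1(E)^2=[C]^2=2p-2=2n$, and the value of $c_2(E)$ obtained above, inequality \eqref{eq:bogo} should rearrange precisely into $(r+1)d\geq r(p-1)$, which is \eqref{liscio}, and this is equivalent to \eqref{eq:rhocond} by expanding $\rho(p,r,d)=p-(r+1)(p-d+r)$.

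The main obstacle I anticipate is twofold. First, one must verify \emph{semistability} of the bundle $E$: on a $K3$ surface the analogous Lazarsfeld--Mukai bundle is semistable essentially for free, but on an abelian surface the irregularity means the construction is more delicate (the paper itself flags this in the introduction), so I would need to use the hypothesis that $[C]$ has \emph{no decomposition into nontrivial effective classes} to rule out destabilizing subsheaves---any such subsheaf would yield an effective decomposition of $[C]$, contradicting primitivity. Second, the bookkeeping to identify $c_2(E)$ correctly (accounting for the torsion-free but possibly non-locally-free nature of $\A$, and any correction terms in the pushforward to $S$) must be done carefully so that Bogomolov's inequality yields exactly the stated bound rather than a weaker one; getting the constant $r(p-1)$ on the nose, rather than $r(p-1)$ up to an error, is where the argument stands or falls. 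The equivalence of \eqref{eq:rhocond} and \eqref{liscio} is a routine algebraic rearrangement and would be dispatched at the end.
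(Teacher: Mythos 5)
Your construction is the same as the paper's: push $\A$ forward to $S$, take the kernel $F$ of the evaluation map $H^0(\A)\*\O_S\to\A$, set $\E:=F^\vee$, and compute $\rk\E=r+1$, $c_1(\E)=[C]$, $c_2(\E)=d$; your Chern-class bookkeeping would come out right and the discriminant inequality $2(r+1)d-r(2p-2)\geq 0$ is indeed exactly \eqref{liscio}. (Two small omissions: the paper first replaces $\A$ by its globally generated part $\A'$, which has the same $h^0$ and $\deg\A'\leq\deg\A$, so that the evaluation map is surjective and the inequality only gets harder; you flag this but do not carry it out.)

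The genuine gap is your positivity input. You propose to prove that $\E$ is \emph{slope-semistable} and invoke Bogomolov, arguing that a destabilizing subsheaf ``would yield an effective decomposition of $[C]$.'' That step fails under the stated hypothesis: if $0\to\M\to\E\to\Q\to 0$ is destabilizing, then $\Q$, being a quotient of $\E$, is generically globally generated with $h^2(\Q)=0$, so $c_1(\Q)$ is effective and nonzero (this is Lemma \ref{lemma:fact}(i)); but $c_1(\M)=[C]-c_1(\Q)$ is only known to be positive against the polarization, not effective, so you get no contradiction with ``no decomposition into nontrivial effective classes.'' (The paper's Appendix does prove stability of such bundles, but under the strictly stronger hypothesis that $[C]$ \emph{generates} $\NS(S)$.) The paper sidesteps this by proving only that $\E$ is \emph{simple}: a nonsimple $\E$ admits an endomorphism dropping rank everywhere, and then both the image and the cokernel are \emph{quotients} of $\E$, hence both have effective nonzero $c_1$ summing to $[C]$ — contradiction (Lemma \ref{lemma:fact}(ii)). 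Simplicity already suffices: for a simple bundle on an abelian surface, $h^0(\E\*\E^*)=h^2(\E\*\E^*)=1$ by Serre duality and $h^1(\E\*\E^*)\geq 2$, so $\chi(\E\*\E^*)\leq 0$, which by Riemann--Roch is precisely the nonnegativity of the Bogomolov discriminant $2\rk\E\cdot c_2(\E)-(\rk\E-1)c_1(\E)^2$. So your target inequality is the right one, but you should derive it from simplicity rather than from semistability, which you cannot establish from the given hypothesis.
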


In the case where $C$ is a smooth curve and $\A$ is a line bundle, this result is one of the two statements in the main theorem of \cite{Pa}. The proof we present here is slightly simpler.
We need the following variant of standard results (cf. \cite{La}). 

\begin{lemma} \label{lemma:fact}
  Let $\F$ be a sheaf of positive rank on a smooth projective surface $X$ that is generically generated by global sections and such that $h^2(\F \* \omega_X)=0$. Then: 
\begin{itemize}
\item[(i)] $c_1(\F)$ is represented by an effective nonzero divisor;
\item[(ii)] if $\F$ is nonsimple (i.e. it has nontrivial endomorphisms), then 
$c_1(\F)$ can be represented by a sum of two effective nonzero divisors.
\end{itemize}
\end{lemma}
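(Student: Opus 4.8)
The plan is to prove Lemma \ref{lemma:fact} via a Lazarsfeld-style argument, using a general section of $\F$ to produce an effective representative of $c_1(\F)$, and then exploiting the nonsimplicity hypothesis in part (ii) to factor this representative into two pieces. The key geometric input is that a sheaf generically generated by global sections admits, after choosing a suitable subspace of sections, a description via a short exact sequence whose kernel and cokernel are controlled.

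For part (i), I would first reduce to the case where $\F$ is globally generated on a dense open set and then pick $\rk(\F)$ general sections $s_1, \ldots, s_{\rk(\F)} \in H^0(\F)$. Wedging these together gives a section of $\det \F = \O_X(c_1(\F))$, and the issue is to show this section is nonzero, so that its vanishing locus is an effective divisor representing $c_1(\F)$. Nonvanishing follows from generic generation: at a general point the sections generate the stalk, so a general choice of $\rk(\F)$ of them spans the fiber and their wedge is nonzero there. The condition $h^2(\F \* \omega_X) = 0$, which by Serre duality says $\Hom(\F, \O_X)^\vee = H^0(\F \* \omega_X)^\vee = 0$ dually controls that $\F$ has no trivial quotient summand; I expect this is what guarantees $c_1(\F)$ is genuinely \emph{nonzero} (not merely effective), since a zero first Chern class together with global generation would force a trivial piece. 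I would make this precise by arguing that $c_1(\F)=0$ combined with generic global generation and the vanishing forces $\F$ to split off a copy of $\O_X$, contradicting the vanishing $\Hom(\F,\O_X)=0$.

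For part (ii), the plan is to use a nontrivial endomorphism $\varphi \in \End(\F)$. Since $\F$ has positive rank on the surface $X$, I would look at the image and kernel of $\varphi$ (or of $\varphi - \lambda \cdot \id$ for a suitable scalar $\lambda$, chosen as an eigenvalue of $\varphi$ at the generic point, so that $\varphi - \lambda\cdot\id$ is neither zero nor invertible). This produces a nontrivial saturated subsheaf $\F' \subset \F$ together with the quotient $\F'' = \F/\F'$, both of positive rank and both generically generated by global sections. Then $c_1(\F) = c_1(\F') + c_1(\F'')$, and applying part (i) to each of $\F'$ and $\F''$ — after checking they inherit the hypotheses, in particular the vanishing $h^2(-\* \omega_X)=0$, which should follow from the long exact sequence in cohomology applied to the defining sequence $0 \to \F' \to \F \to \F'' \to 0$ — exhibits $c_1(\F)$ as a sum of two nonzero effective divisors.

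The main obstacle I anticipate is the bookkeeping in part (ii): ensuring that the subsheaf and quotient extracted from the endomorphism genuinely inherit \emph{both} the generic-global-generation property and the cohomological vanishing $h^2(-\* \omega_X)=0$. Generic generation passes to quotients easily but is more delicate for subsheaves, so I would likely argue on the quotient side and handle $\F'$ by a complementary argument, or replace $\F'$ by its saturation to control $c_1$. The vanishing $h^2(\F' \* \omega_X)=0$ and $h^2(\F'' \* \omega_X)=0$ need to be deduced from $h^2(\F \* \omega_X)=0$; tensoring the exact sequence by $\omega_X$ and taking cohomology gives a surjection $H^2(\F \* \omega_X) \twoheadrightarrow H^2(\F'' \* \omega_X)$, settling the quotient, while $H^2(\F' \* \omega_X)$ requires that $H^1(\F'' \* \omega_X) \to H^2(\F' \* \omega_X)$ be understood, which is where I would need to be careful and possibly invoke a positivity or general-position argument. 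Once these inheritance issues are resolved, the decomposition of $c_1(\F)$ is immediate.
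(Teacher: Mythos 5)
Your argument for (i) is essentially the paper's: a general choice of $\rk \F$ sections gives a nonzero section of $\det\F$, and the only degenerate case is when this section vanishes in codimension $\geq 2$, which produces an injection $\O_X^{\oplus \rk\F}\hookrightarrow\F$ with cokernel supported on points, whence $h^2(\F\otimes\omega_X)=h^2(\omega_X^{\oplus\rk\F})>0$ (equivalently $\Hom(\F,\O_X)\neq 0$), a contradiction. Two small caveats: you never treat the torsion of $\F$ (the paper first passes to $\F/T(\F)$, observing that $c_1(T(\F))$ is effective), and the injection $\O_X^{\oplus\rk\F}\to\F$ need not split as you claim, though the contradiction goes through without splitting (dualize, or compute $h^2$ directly as above). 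Also, Serre duality identifies $H^2(\F\otimes\omega_X)$ with $\Hom(\F,\O_X)^\vee$, not with $H^0(\F\otimes\omega_X)^\vee$.

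The genuine gap is in (ii), and it is exactly the point you flag but do not close. Writing $0\to\F'\to\F\to\F''\to 0$ with $\F'$ a subsheaf extracted from the endomorphism, you must verify that $\F'$ is generically globally generated and satisfies $h^2(\F'\otimes\omega_X)=0$ before applying (i) to it. Neither property follows from the long exact sequence, which only yields a surjection $H^1(\F''\otimes\omega_X)\to H^2(\F'\otimes\omega_X)$ with no control on the source; saturating $\F'$ does not help (it changes nothing about this $H^2$, and a "positivity or general-position argument" is not available); and if $\F'=\ker\nu$ there is simply no reason for either property to hold. The paper's resolution is to take $\F'=\N:=\im\nu$ and $\F''=\M:=\coker\nu$, where $\nu=\varphi-\lambda\cdot\id$ is a nonzero endomorphism dropping rank generically. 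Then $c_1(\F)=c_1(\N)+c_1(\M)$ because $\N\subset\F$ with quotient $\M$, but at the same time \emph{both} $\N$ and $\M$ are quotients of $\F$ (the former via $\nu$ itself), so both inherit generic global generation and the vanishing $H^2(\cdot\otimes\omega_X)=0$, the latter because on a surface $H^2$ of a quotient sheaf is a quotient of $H^2$. With this choice of decomposition your outline closes up; as written, the inheritance step for the sub-piece is missing and your proposed workarounds would not supply it.
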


\begin{proof}
 We first prove (i).  Let $T(\F)$ be the torsion subsheaf of $\F$ and set $\F_0:=\F/T(\F)$. Then $c_1(\F)=c_1(\F_0) + c_1(T(\F))$
and $c_1(T(\F))$ is represented by a nonnegative linear combination of the codimension one irreducible components of the support of $T(\F)$, if any. Moreover, $\F_0$ is torsion free of positive rank and, being a quotient of $\F$, is globally generated off a codimension one set and satisfies $h^2(\F_0\* \omega_X)=0$. We may therefore assume that $\F$ is torsion free. 

Let $\F'$ denote the image of the evaluation map $H^0(\F) \* \O_X \to \F$, which is generically surjective by assumption. We have $c_1(\F)=c_1(\F')+D$,
where $D$ is effective and is zero if and only if the evaluation map is surjective off a finite set.
The sheaf $\F'$ is torsion free of positive rank and generated by its global sections. By a standard argument as in \cite[p.~302]{La}, $c_1(\F')$ is represented by an effective divisor, which is zero if and only if $\F'$ is trivial. We are now done, unless $\F'$ is trivial and $D$ is zero, which means that we have an injection $\F' \cong \+ \O_X \to \F$ with cokernel supported on a finite set. But this implies $h^2(\F \* \omega_X)=h^2(\+ \omega_X) >0$, a contradiction. 

 We next prove (ii). If  $\F$ is nonsimple, then by standard arguments as in e.g. \cite{La}, there exists a nonzero endomorphism $\nu: \F \to \F$ dropping rank everywhere. The sheaves $\N:=\im \nu$ and $\M:=\coker \nu$ have positive ranks, and, being quotients of $\F$, are generically globally generated and satisfy 
$H^2(\N\* \omega_X)=H^2(\M\* \omega_X)=H^2(\F \* \omega_X)=0$. Hence, by (i), both $c_1(\N)$ and $c_1(\M)$ are represented by an effective nonzero divisor. Since $c_1(\F)=c_1(\N)+c_1(\M)$, the result follows. 
\end{proof}

\renewcommand{\proofname}{Proof of Theorem \ref{thm:necex}}

\begin{proof}
  Let $\A'$ denote the globally generated part of $\A$, that is, the image of the evaluation map $H^0(\A) \* \O_C \rightarrow \A$. Then $h^0(\A')=h^0(\A)$ and $\deg \A' \leq \deg \A$, whence it suffices to prove the statement for $\A'$. We may thus assume that $\A$ itself is globally generated.

We consider $\A$ as a torsion sheaf on $S$. By arguing as in \cite{Go}, the kernel of the evaluation map is a vector bundle, whose dual we denote by $\E_{C,\A}$. This gives the exact sequence
  \begin{equation}
    \label{eq:defE}
   \xymatrix{
0 \ar[r] &  \E_{C,\A}^\vee \ar[r] &  H^0(\A) \* \O_S \ar[r]^{\hspace{0.9cm}ev_{S,\A}} & \A  \ar[r] & 0.} 
  \end{equation}
This sequence and its dual trivially yield:
\begin{eqnarray}
\label{eq:p0}  \rk \E_{C,\A}=r+1; \; \; c_1(\E_{C,\A})=[C], \; \; c_2(\E_{C,\A})=d; \; \; \chi(\E_{C,\A})=p-d-1; \\
\label{eq:p3} h^0(\E_{C,\A}^\vee)=0  \; \; \mbox{and} \; \; h^0(\E_{C,\A}) \geq r+1;\\
\label{eq:p4} \mbox{$\E_{C,\A}$ is globally generated off $C$}.  
 \end{eqnarray}
By \eqref{eq:p3} and \eqref{eq:p4}, along with Lemma \ref{lemma:fact}(ii) and our assumptions on $[C]$, we have that $\E_{C,\A}$ is simple. A simple vector bundle $\E$ on an abelian surface must satisfy 
\begin{equation} \label{eq:simple}
\frac{1}{2}c_1(\E)^2 \geq \rk \E \cdot \chi(\E).
\end{equation}
(Indeed, by Serre duality, $1=h^0(\E \* \E^*)= h^2(\E \* \E^*)=\frac{1}{2} \left( \chi (\E \* \E^*)+h^1(\E \* \E^*) \right)$; moreover, $h^1(\E \* \E^*) \geq 2$, 
cf. e.g. \cite[Pf. of 10.4.4, p.~296]{BL}, whence \eqref{eq:simple} follows by Riemann-Roch.) The desired inequality \eqref{eq:rhocond} now follows from \eqref{eq:p0}.
\end{proof}

\renewcommand{\proofname}{Proof}

As a consequence, we obtain a result in the spirit of \cite[Thm.~3.1]{CK}.

\begin{thm} \label{thm:noexist}
 Let $C$ be an integral curve of arithmetic genus $p=p_a(C)$ on an abelian surface $S$ such that $[C] \in \NS(S)$ has no decomposition into nontrivial effective classes. Assume that the normalization of $C$ possesses a $g^r_d$. Let $g$ be the geometric genus of $C$ and set $\delta=p-g$. Then
  \begin{equation}
    \label{eq:bound}
  \rho(p,\alpha r,\alpha d+\delta)  \geq -\alpha r(\alpha r+2), \; \; \mbox{i.e.,} \; \; \delta \geq \alpha\Big(r(g-d\alpha-1)-d\Big),
      \end{equation}
where $\alpha=\alpha(p,r,d,\delta):=\Big\lfloor \frac{r(g-1)+d(r-1)}{2rd}\Big\rfloor$.  
\end{thm}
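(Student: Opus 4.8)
The plan is to reduce the statement to a direct application of Theorem~\ref{thm:necex}, by exhibiting on $C$ itself a torsion free rank one sheaf whose degree and number of sections match those of the ``$\alpha$-th power'' of the given $g^r_d$. Write $\nu:\widetilde C\to C$ for the normalization and let $(A,V)$ be the given $g^r_d$, so that $A$ is a line bundle on $\widetilde C$ and $V\subseteq H^0(\widetilde C,A)$ with $\dim V=r+1$. After subtracting the base divisor I may assume $V$ is base point free, replacing $A$ by a line bundle $A_0$ with $\deg A_0\leq d$; since $\dim V=r+1$, the induced morphism $\widetilde C\to\PP^r$ is automatically nondegenerate. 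For the fixed integer $\alpha\geq 1$ in the statement I would consider the sheaf $\A:=\nu_*\big(A_0^{\otimes\alpha}\big)$ on $C$. As $\nu$ is finite and birational, $\A$ is torsion free of rank one, $h^0(C,\A)=h^0(\widetilde C,A_0^{\otimes\alpha})$, and since $\nu_*$ preserves Euler characteristics one computes $\deg\A=\alpha\deg A_0+(p-g)\leq \alpha d+\delta$ (pushing forward a degree-$e$ line bundle on $\widetilde C$ yields a torsion free sheaf of degree $e+\delta$ on $C$).

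The crux—and the step I expect to be the main obstacle—is the lower bound
\[
h^0\big(\widetilde C,A_0^{\otimes\alpha}\big)\;\geq\;\alpha r+1.
\]
I would prove this by induction on $\alpha$, estimating the dimension of the image $W_\alpha$ of the multiplication map $\Sym^\alpha V\to H^0(A_0^{\otimes\alpha})$. The base case $W_1=V$ is clear, and for the inductive step $W_\alpha=\mathrm{Im}\big(V\otimes W_{\alpha-1}\to H^0(A_0^{\otimes\alpha})\big)$. The key input is the classical $H^0$-addition estimate (a consequence of the base point free pencil trick applied to a general pencil inside $V$, cf.\ \cite{La}): for base point free $V$ with $\dim V\geq 2$ and any nonzero space $W$ of sections on a curve, $\dim(V\cdot W)\geq \dim V+\dim W-1$. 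Since $r\geq 1$ forces $\dim V=r+1\geq 2$, this gives $\dim W_\alpha\geq (r+1)+\dim W_{\alpha-1}-1=r+\dim W_{\alpha-1}$, whence $\dim W_\alpha\geq \alpha r+1$ by induction, and a fortiori $h^0(A_0^{\otimes\alpha})\geq \alpha r+1$.

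It then remains to feed $\A$ into Theorem~\ref{thm:necex}. Setting $r'+1:=h^0(C,\A)\geq \alpha r+1$, the theorem yields $(r'+1)\deg\A\geq r'(p-1)$, i.e.\ $\deg\A\geq \frac{r'}{r'+1}(p-1)$. Since $t\mapsto t/(t+1)$ is increasing, $r'\geq \alpha r$, and $\deg\A\leq \alpha d+\delta$, I obtain
\[
\alpha d+\delta\;\geq\;\deg\A\;\geq\;\frac{\alpha r}{\alpha r+1}\,(p-1),
\]
that is, $(\alpha r+1)(\alpha d+\delta)\geq \alpha r(p-1)$. By the same elementary manipulation relating \eqref{eq:rhocond} to \eqref{liscio} (now with $r$ replaced by $\alpha r$ and $d$ by $\alpha d+\delta$), this is exactly $\rho(p,\alpha r,\alpha d+\delta)\geq -\alpha r(\alpha r+2)$, and substituting $p=g+\delta$ rewrites it as $\delta\geq \alpha\big(r(g-d\alpha-1)-d\big)$. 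I would close by remarking that the bound in fact holds for every integer $\alpha\geq 1$; its right-hand side is a downward parabola in $\alpha$ maximized at $\frac{r(g-1)-d}{2rd}$, so the sharpest estimate is attained at the nearest integer, namely $\alpha=\big\lfloor \frac{r(g-1)+d(r-1)}{2rd}\big\rfloor$, which is the value in the statement (the degenerate cases $\alpha\leq 0$ only give the trivial $\delta\geq 0$).
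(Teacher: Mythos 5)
Your argument is essentially the paper's own proof: push forward the $\alpha$-th tensor power of the $g^r_d$ to a torsion free rank one sheaf on $C$, feed it into Theorem~\ref{thm:necex}, and observe that the resulting quadratic inequality in the exponent is sharpest at the integer nearest the vertex, which is the stated $\alpha$. The only difference is that you supply the justification (via the Hopf/base-point-free-pencil estimate $\dim(V\cdot W)\geq\dim V+\dim W-1$) for the bound $h^0(A^{\otimes\alpha})\geq\alpha r+1$, which the paper asserts without comment, and you handle the base locus and the monotonicity in $h^0$ slightly more carefully; both refinements are correct.
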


\begin{proof}
  Let $\nu: \widetilde{C} \to C$ be the normalization of $C$. There exists a line bundle $A\in\Pic^d(\widetilde{C})$ such that $h^0(\widetilde{C},A)\geq r+1$. Then, for any positive integer $l$, the sheaf $\A_l:=\nu_*(A^{\* l})$ is torsion free of rank one on $C$ with $h^0(\A_l)=h^0(A^{\*l})\geq lr+1$ and $\deg \A_l=\deg (A^{\*l})+\delta=ld+\delta$.
  
We have $\rho(\A_l) \leq \rho(p,lr,ld+\delta)$.
Theorem \ref{thm:necex} then yields
\begin{equation}
  \label{eq:rho0}
  \rho(p,lr,ld+\delta)=l^2 r(d-r)-l(gr+r-d)+\delta   \geq -lr(lr+2),
\end{equation}
or equivalently, 
\begin{equation*}
  \label{eq:rho}
  l^2 rd-l(gr-r-d)+\delta  \geq 0.
\end{equation*}
This quadratic polynomial attains its minimum for 
$l_0=\frac{gr-r-d}{2rd}$.  The inequality
\eqref{eq:rho0} therefore holds for the closest integer to $l_0$, which is $\alpha$. This proves \eqref{eq:bound}. 
\end{proof}

\begin{remark}
For $\delta=0$, condition \eqref{eq:bound} reduces to \eqref{eq:rhocond} again.
\end{remark}

\begin{remark} \label{rem:kgonalnonnodal}
In the case $r=1$, setting $d=k$, condition \eqref{eq:bound} 
reads like \eqref{eq:boundintro}. Thus Theorem \ref{thm:noexist} proves the ``only if'' part of Theorem \ref{thm:main2}(i). However,
Theorem \ref{thm:noexist} does not require the curves to be nodal, whence 
\eqref{eq:boundintro} is also necessary for the nonemptiness of $V$ as in Theorem \ref{thm:boundingdim}.
\end{remark}

\begin{remark} \label{rem:optimusprime}
We spend a few words on the optimality of Theorem \ref{thm:necex}. 
The result is optimal for $r=1$ because the condition $d\geq r(r+1)$ is automatically fulfilled. It turns out that the theorem is optimal even for $r < p-1-d$. Indeed, in this range the inequality $\rho(p,r,d)\geq -r(r+2)$ is also {\it sufficient} for nonemptiness of $|L|^r_d$, as it implies the condition 
$d \geq r(r+1)$   in Theorem \ref{thm:main4}(ii). 

However, the result is not optimal when
$0 \leq p-1-d \leq r$, as shown by Examples \ref{cex1}--\ref{cex2} below.
In the Appendix we will see that the bound \eqref{eq:rhocond} in Theorem \ref{thm:necex} can be considerably improved for $0 \leq p-1-d \leq r$ (cf. Theorem \ref{thm:necexforte}) as in this range the bundles $\E_{C,\A}$ are forced to have nonvanishing $H^1$ by \eqref{eq:p0} and \eqref{eq:p3}. This behaviour never occurs on $K3$ surfaces, where Lazarsfeld-Mukai bundles have vanishing $H^1$. This stronger version of Theorem \ref{thm:necex} is postponed until the Appendix, since it is a bit technical and neither the result nor its proof are used in the rest of the paper. 
\end{remark}

\begin{example}\label{cex1}
For $r=2$, $d=4$ and $p=6$, inequality \eqref{eq:rhocond} is satisfied. However, any smooth curve possessing a $g^2_4$ is hyperelliptic and thus $|L|^2_4=|L|^1_2=\emptyset$ because $\rho(6,1,2)<-3$.
\end{example}

\begin{example}\label{cex3}
For $r=3$, $d=7$ and $p=10$, the inequality \eqref{eq:rhocond} is satisfied. Assume the existence of a curve $C\in |L|^3_7$. Since $|L|^3_6=|L|^1_2=\emptyset$, any $g^3_7$ on $C$ would define a birational map $C\to X\subset \mathbb{P}^3$, where $X$ is a nondegenerate space curve of degree $7$, contradicting  Castelnuovo's bound.
\end{example}

\begin{example}\label{cex2}
This example is of particular interest, since it hightlights yet another  major difference with the $K3$ setting. Let $r=2$, $d=5$ and $p=7$. Then the bound \eqref{eq:rhocond} is satisfied and any $g^2_5$ on a curve $C\in |L|^2_5$ should be base point free since $|L|^2_4=|L|^1_2=\emptyset$. We conclude that $|L|^2_5=\emptyset$ because the arithmetic genus of any plane quintic curve is $6$.

On the other hand, Theorem \ref{thm:main2}(i) will show that the locus $|L|^1_{1,2}$ is nonempty, that is, there is a nodal curve $C$ in $|L|$ whose normalization has genus $6$ and possesses a $g^1_2$, whence also a $g^2_4$. Pushing the latter down to $C$, we see that $C$ possesses a torsion free sheaf $\A$ with $d=\deg \A=5$ and $r=h^0(\A)-1=2$.

This phenomenon does not occur on  $K3$ surfaces: if $C$ is an irreducible curve on a $K3$ surface $S$ and $\A$ is a torsion free sheaf on $C$ of degree $d$, then $(C,\A)$ can be deformed to a pair $(C_1,A_1)$ such that $C_1\subset S$ is a smooth curve and $A_1\in G^r_d(C_1)$ with $r=h^0(\A)-1$ (cf. \cite[Prop. 2.5]{Go}).
\end{example}

\section{$k$-gonal loci on abelian surfaces} \label{sec:sevdeg}

The aim of this section is to study the $k$-gonal loci $|L|^1_{\delta,k}$ and prove the remaining part of Theorem \ref{thm:main2}. We again use the degenerate abelian surface $(S_0,L_0)$ and the same notation as in \S \ref{severi}.

Let $X=\nu(C)\in V(\alpha_0,\ldots,\alpha_{n-1})$ and let $\overline{X}$ be the stable model of the partial normalization of $X$ at its $\delta$ marked points. We identify the component $\Gamma\subset C$ that is not a fiber with $E$ and define the scheme: 
$$
G^1_k(\overline{X}):=\left\{\mathfrak{g}\in G^1_k(E)\;\Big\vert\; \dim\left|\mathfrak{g}\left(-P-P\+e^{\+l}\right)\right|>0 \textrm{ for all distinguished pairs }\left(P,P\+e^{\+l}\right)\right\}.
$$
\begin{definition}\label{defn:limkgonal}  
We define $V^k(\alpha_0,\ldots,\alpha_{n-1})$ to be 
the closed subset of  $V(\alpha_0,\ldots,\alpha_{n-1})$
consisting of curves $X=\nu(C)$ such that $G^1_k(\overline{X})\neq\emptyset$.
\end{definition}
As a consequence of Lemma \ref{lemma:stablemodel} and the theory of admissible covers (cf. Remark \ref{pencil}), we get:

\begin{lemma} \label{lemma:kgonale}
 Let $X=\nu(C)$ be a curve in $V( \alpha_0,\ldots,\alpha_{n-1})$.  Then $X$ lies in $|L_0|^1_{\delta,k}$ if and only if it lies in $V^k(\alpha_0,\ldots,\alpha_{n-1})$. In particular, $V^k( \alpha_0,\ldots,\alpha_{n-1})$  fills up one or more components of
$|L_0|^1_{\delta,k}$.
\end{lemma}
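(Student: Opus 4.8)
The plan is to combine the explicit description of the stable model $\overline{X}$ furnished by Lemma \ref{lemma:stablemodel} with the admissible-cover characterization of the $k$-gonal locus recorded in Remark \ref{pencil}. By the very definition of $|L_0|^1_{\delta,k}$, the curve $X=\nu(C)$ belongs to it precisely when $\psi(X)=\overline{X}\in\overline{\M}^1_{g,k}$; so what must be shown is that $\overline{X}$ lies in the Zariski closure of the $k$-gonal locus if and only if $G^1_k(\overline{X})\neq\emptyset$, i.e. if and only if $X\in V^k(\alpha_0,\ldots,\alpha_{n-1})$. First I would record the shape of $\overline{X}$: by Lemma \ref{lemma:stablemodel} its normalization is $\Gamma\cong E$, and $\overline{X}$ is obtained from $E$ by identifying the distinguished pairs of points $\bigl(P,P\oplus e^{\oplus l}\bigr)$, so that, using \eqref{eq:rela}, it is an irreducible genus-$g$ curve with exactly $g-1$ nodes lying over the fixed elliptic curve $E$.

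The heart of the argument is the translation of membership in $\overline{\M}^1_{g,k}$ into a condition on $E$. By Remark \ref{pencil}, i.e. \cite[Thm.~3.160]{HM}, one has $\overline{X}\in\overline{\M}^1_{g,k}$ if and only if $\overline{X}$ is stably equivalent to the domain of a degree-$k$ admissible cover onto a stable pointed curve of genus $0$. I would analyze such a cover: its restriction to the genus-$1$ component $E$ is a pencil $\mathfrak{g}\in G^1_k(E)$, equivalently a degree-$k$ morphism $E\to\PP^1$, and the admissibility condition at each node of $\overline{X}$ forces the two branches there, namely the two points $P$ and $P\oplus e^{\oplus l}$ of $E$, to be carried to one and the same point of the target with matching ramification. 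This is exactly the requirement that $\mathfrak{g}$ contain a divisor through each distinguished pair, which is the condition defining $G^1_k(\overline{X})$. Conversely, any $\mathfrak{g}\in G^1_k(\overline{X})$ produces, after inserting rational bridges at the nodes to correct ramification if needed, an admissible cover whose source is stably equivalent to $\overline{X}$. This matching of admissible covers of $\overline{X}$ with pencils on $E$ identifying all distinguished pairs is the step I expect to be the main obstacle, since an admissible cover may a priori acquire rational tails or allow the degree on $E$ to drop; I must check that the flexibility inherent in stable equivalence produces no covers beyond those accounted for by a genuine $\mathfrak{g}\in G^1_k(E)$, and that conversely every such $\mathfrak{g}$ does extend to an admissible cover.

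Granting this equivalence, I conclude that $X\in|L_0|^1_{\delta,k}$ if and only if $X\in V^k(\alpha_0,\ldots,\alpha_{n-1})$, which is the first assertion. For the final statement, recall that $V_{|L_0|,\delta}(S_0)=\bigcup V(\beta_0,\ldots,\beta_{n-1})$ as $(\beta_j)$ ranges over the nonnegative solutions of \eqref{eq:numcond} and \eqref{eq:nodes}; hence every curve of $|L_0|^1_{\delta,k}$ lies in $V^k(\beta_0,\ldots,\beta_{n-1})$ for some such vector. Combining this with Lemma \ref{lemma:v0}(i), which shows that each $V(\alpha_0,\ldots,\alpha_{n-1})$ fills up one or more components of $V_{|L_0|,\delta}(S_0)$, it follows that the closed subsets $V^k(\alpha_0,\ldots,\alpha_{n-1})$ fill up one or more components of $|L_0|^1_{\delta,k}$, as claimed.
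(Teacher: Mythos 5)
Your argument is correct and follows exactly the route the paper intends: the paper states this lemma without a written proof, deducing it directly from Lemma \ref{lemma:stablemodel} (the stable model is $E$ with the distinguished pairs identified) together with the admissible-cover characterization of $\overline{\M}^1_{g,k}$ recalled in Remark \ref{pencil}, which is precisely the translation you carry out. The point you flag as delicate (covers whose restriction to $E$ has degree $k'<k$) is handled as you suggest, by adding base points, so nothing is missing.
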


We now translate nonemptiness of $V^k( \alpha_0,\ldots,\alpha_{n-1})$ into a problem of intersection theory on $\Sym^2(E)$. First of all, we recall that the variety $\Pic^2(E)$ is isomorphic to $E$: indeed, fixing any point $e \in E$, we obtain distinct line bundles $\O_E(x+e)$ for all $x \in E$. Moreover, $q:\Sym^2(E)\to E\simeq \Pic^2(E)$ is a ruled surface with fiber over a point $x\in E$ given by the linear system $|x+e|$, which is of type $g^1_2$. We denote the class of the fibers by $\mathfrak{f}$, and the fiber over a point $x \in E$ by $\mathfrak{f}_x$. For each $y \in E$ one defines a section of $q$ by setting:
\[ \mathfrak{s}_y: = \{x+y \; | \: x \in E\}. \]
Let $\mathfrak{s}$ be its algebraic equivalence class. Since for $y \neq y'$ the sections $\mathfrak{s}_y$ and $\mathfrak{s}_{y'}$
intersect transversally in the point $y+y'$, then $\mathfrak{s}^2=1$. In particular, one has $K_{\Sym^2(E)} \equiv -2 \mathfrak{s}+\mathfrak{f}$.

 Now, we recall the definition of some curves in $\Sym^2(E)$, already introduced in \S \ref{severi}. Having fixed a positive integer $l$ and a point $y \in E$, we set: 
 \[
\mathfrak{c}_{y,l}:=\left\{x+(x\+y^{\+l})\in \Sym^2(E)\; |\, x\in E\right\}.
\]
For fixed $l$, these curves are all algebraically equivalent as $y$ varies. Clearly $\mathfrak{c}_{e_0,l} = \Delta\cong E$, the diagonal in $\Sym^2(E)$; the Riemann-Hurwitz formula thus yields $ \mathfrak{c}_{e_0,l} \cdot \mathfrak{f}=4$. Also note that $\mathfrak{c}_{y,l}\cdot \mathfrak{s}=2$, the two intersection points of $\mathfrak{c}_{y,l}$ with the section $\mathfrak{s}_y$ equal to $y+ y ^{\+l}$ and $ y ^{\ominus l}+y$. This gives
\[ \mathfrak{c}_{y,l} \equiv \Delta \equiv 4 \mathfrak{s} - 2\mathfrak{f}. \]

The Brill-Noether variety $G^1_k(E)$ is irreducible of dimension $2k-3$. To each $g^1_k$, call it $\mathfrak{g}$, on $E$ we associate a curve
\[ C_{\mathfrak{g}}:=\{ x+y \in \Sym^2(E) \; | \; \mathfrak{g}(-x-y) \geq 0 \} \]
in $\Sym^2(E)$. To compute the class of this curve, note that $C_{\mathfrak{g}} \cdot \Delta=2k$ again by Riemann-Hurwitz, and that $C_{\mathfrak{g}} \cdot \mathfrak{s} =k-1$ since there is a unique element of $\mathfrak{g}$ containing a fixed point $x\in E$. Hence:
\[ C_{\mathfrak{g}} \equiv (k-2)\mathfrak{s}+\mathfrak{f} =: C_k. \]
(Equivalently, the class of $C_{\mathfrak{g}}$ can be found by specializing  ${\mathfrak{g}}$ to a $g^1_2$ plus $(k-2)$ fixed points.) 

We have that $C_k \cdot \mathfrak{c}_{y,l}=2k$.

\begin{lemma} \label{lemma:trans}
  For general $y \in E$, the following are satisfied:
  \begin{itemize}
  \item[(i)] no curve $\mathfrak{c}_{y,l}$ is contained in a curve 
$C_{\mathfrak{g}}$;
 \item[(ii)] for general $\mathfrak{g}$, the curve $C_{\mathfrak{g}}$ intersects each $\mathfrak{c}_{y,l}$ transversally in $2k$ distinct points;
 \item[(iii)] in addition, none of these $2k$ points is fixed varying $\mathfrak{g}$.
  \end{itemize}
\end{lemma}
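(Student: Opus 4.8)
The plan is to treat the three statements in turn, using throughout that $\mathfrak{c}_{y,l}$ is parametrized by $x \mapsto x + (x \oplus y^{\oplus l})$ and that a point $x + x'$ of $\Sym^2(E)$ lies on $C_{\mathfrak{g}}$ precisely when $\mathfrak{g}$ carries a member through both $x$ and $x'$. For (i) I would argue by translation invariance. If $\mathfrak{c}_{y,l} \subseteq C_{\mathfrak{g}}$ for some $\mathfrak{g} \in G^1_k(E)$, then for every $x \in E$ the series $\mathfrak{g}$ has a divisor passing through both $x$ and $x \oplus y^{\oplus l}$. Writing $\mathfrak{g} = \mathfrak{g}' + B$ with $B$ its base locus and $\mathfrak{g}'$ base-point-free, this forces the associated morphism $\phi \colon E \to \PP^1$ of degree $\geq 2$ to satisfy $\phi = \phi \circ T_{y^{\oplus l}}$ away from the finite set $B \cup (B \ominus y^{\oplus l})$, hence everywhere. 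For general $y$ the point $y^{\oplus l}$ is non-torsion for each of the finitely many relevant $l$, so its translates are Zariski dense in $E$, and $\phi$ would be constant, a contradiction. This rules out containment for every $\mathfrak{g}$ at once, and in particular guarantees that the intersections below are proper.

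For (ii), since $C_{\mathfrak{g}}$ and $\mathfrak{c}_{y,l}$ are curves on the smooth surface $\Sym^2(E)$ with $C_{\mathfrak{g}} \cdot \mathfrak{c}_{y,l} = 2k$, transversality is equivalent to the intersection consisting of $2k$ distinct points, which is an open condition on $\mathfrak{g} \in G^1_k(E)$. As $G^1_k(E)$ is irreducible, it suffices to exhibit one $\mathfrak{g}_0$ where this holds. I would take the totally degenerate series $\mathfrak{g}_0 = \mathfrak{g}_2 + q_1 + \cdots + q_{k-2}$, a base-point-free $g^1_2$ plus $k-2$ base points, for which $C_{\mathfrak{g}_0} = \mathfrak{f}_{x_0} \cup \mathfrak{s}_{q_1} \cup \cdots \cup \mathfrak{s}_{q_{k-2}}$ has class $\mathfrak{f} + (k-2)\mathfrak{s}$. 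For general $y$ and general $x_0, q_1, \ldots, q_{k-2}$, the curve $\mathfrak{c}_{y,l}$ meets $\mathfrak{f}_{x_0}$ in the $4$ solutions of $x^{\oplus 2} = x_0 \oplus e \ominus y^{\oplus l}$ and each $\mathfrak{s}_{q_i}$ in the two points $x = q_i$ and $x = q_i \ominus y^{\oplus l}$, these $2k$ values of $x$ being distinct and avoiding the finitely many singular points of $C_{\mathfrak{g}_0}$. Since they already account for the full intersection number $2k$, each intersection is transverse; by openness and irreducibility the same holds for general $\mathfrak{g}$, and intersecting over the finitely many values of $l$ preserves genericity.

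For (iii), I would run a dimension count on the incidence variety $I = \{(\mathfrak{g}, z) \mid z \in C_{\mathfrak{g}} \cap \mathfrak{c}_{y,l}\} \subset G^1_k(E) \times \mathfrak{c}_{y,l}$. A fixed $z = x + (x \oplus y^{\oplus l})$ with distinct coordinates (automatic for general $y$) lies on $C_{\mathfrak{g}}$ iff $\mathfrak{g}$ has a section vanishing at both points; writing $\mathfrak{g} = (L, V)$ with $V \in \Gr(2, h^0(L))$, this is the Schubert condition $V \cap H^0(L - x - (x \oplus y^{\oplus l})) \neq 0$, which cuts out a divisor in the fibre over each $L \in \Pic^k(E)$. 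Hence every fibre of the projection $I \to \mathfrak{c}_{y,l}$ has codimension $\geq 1$ in $G^1_k(E)$, so no point $z$ lies on $C_{\mathfrak{g}}$ for all $\mathfrak{g}$; equivalently, none of the $2k$ intersection points is constant as $\mathfrak{g}$ varies.

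The step I expect to be the main obstacle is (ii): confirming that the degenerate member $\mathfrak{g}_0$ really yields $2k$ distinct transverse points requires checking that the intersection points lying on the several components of the reducible curve $C_{\mathfrak{g}_0}$ stay pairwise distinct and avoid its nodes for general parameters. Once this is in place, the passage to a general base-point-free $\mathfrak{g}$ is a soft semicontinuity argument.
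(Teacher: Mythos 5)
Your proof is correct, but it takes a genuinely different route from the paper's, which disposes of all three parts in three lines by degenerating $y$ to $e_0$: the family $\mathfrak{c}_{y,l}$ then specializes to the diagonal $\Delta$, the intersection $C_{\mathfrak{g}}\cap\Delta$ is the ramification divisor of $\mathfrak{g}$, and (i)--(iii) reduce to the facts that a general $g^1_k$ on $E$ has $2k$ distinct simple ramification points, none of them fixed. You instead keep $y$ general and vary the other ingredients: for (i) a direct translation-invariance argument (if $\mathfrak{c}_{y,l}\subseteq C_{\mathfrak{g}}$ then the degree $\geq 2$ map $\phi$ attached to the moving part of $\mathfrak{g}$ satisfies $\phi=\phi\circ T_{y^{\oplus l}}$, impossible for $y^{\oplus l}$ non-torsion); for (ii) a specialization of $\mathfrak{g}$ rather than of $y$, to the reducible curve $\mathfrak{f}_{x_0}\cup\bigcup\mathfrak{s}_{q_i}$ (the same degeneration the paper uses only to compute the class $C_k$); for (iii) a Schubert-cycle codimension count in the fibres of $G^1_k(E)\to\Pic^k(E)$. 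What each approach buys: the paper's single degeneration is shorter and handles all three claims uniformly, at the cost of an implicit semicontinuity step over the one-parameter family of curves $\mathfrak{c}_{y,l}$; your (i) is actually sharper, holding for \emph{every} $\mathfrak{g}$ and every $y$ with $y^{\oplus l}$ non-torsion with no limiting argument, and your (iii) needs no genericity of $y$ at all, while your (ii) requires the extra bookkeeping you flag (distinctness of the $2k$ parameter values and avoidance of the nodes of the reducible $C_{\mathfrak{g}_0}$), all of which checks out for general $x_0,q_1,\dots,q_{k-2}$ since the total intersection number $4+2(k-2)=2k$ is then achieved by reduced points.
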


\begin{proof}
  By moving $y$, each $\mathfrak{c}_{y,l}$ moves in  a one-dimensional family containing the diagonal $\Delta$ obtained for $y=e_0$. Since $\Delta$ is not contained in any $C_{\mathfrak{g}}$, (i) follows. Similarly, it suffices to prove (ii)
for the intersection with $\Delta$ of a $C_{\mathfrak{g}}$, which are the ramification points of ${\mathfrak{g}}$. Hence (ii) follows, and (iii) is proved in the same way.
 \end{proof}

We can now prove the following:

\begin{prop} \label{prop:riempie}
If \eqref{eq:numcond}, \eqref{eq:nodes} and
\begin{equation}
\label{eq:numcond2} \alpha_{j} \leq 2k \; \; \mbox{for all} \; \;0\leq j\leq n-1 
\end{equation}
hold, then the variety 
$V^k( \alpha_0,\ldots,\alpha_{n-1})$
is nonempty and all irreducible components of $|L_0|^1_{\delta,k}$ contained in $V^k( \alpha_0,\ldots,\alpha_{n-1})$ have dimension equal to $\min\{2k-4,g-2\}$.  Furthermore, for a general curve $X=\nu(C)\in V^k( \alpha_0,\ldots,\alpha_{n-1})$, the scheme $G^1_k(\overline{X})$ is reduced of dimension $\max\{0,\rho(g,1,k)\}$. 
\end{prop}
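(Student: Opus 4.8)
The strategy is to translate membership in $V^k(\alpha_0,\ldots,\alpha_{n-1})$ into an incidence problem on the ruled surface $\Sym^2(E)$ and then run a dimension count on the resulting correspondence, fed by the lower bound of Proposition \ref{prop:expdimvk}. By Lemma \ref{lemma:stablemodel} and the definition of $G^1_k(\overline X)$, a curve $X=\nu(C)$ in $V(\alpha_0,\ldots,\alpha_{n-1})$ lies in $V^k$ exactly when some $\mathfrak g\in G^1_k(E)$ has its associated curve $C_{\mathfrak g}$ passing through all of the $g-1$ points of $\Sym^2(E)$ determined by the distinguished pairs; by Lemma \ref{lemma:coppie} a pair arising from a special $l$-sequence contributes a point on $\mathfrak c_{e,l+1}$, so these $g-1$ points (counted by \eqref{eq:distnumb}--\eqref{eq:rela}) split as $\alpha_l$ points on $\mathfrak c_{e,l+1}$, $l=0,\ldots,n-1$. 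Setting $\mathcal P:=\prod_l\Sym^{\alpha_l}(E)$, irreducible of dimension $\sum_l\alpha_l=g-1$, and recalling from Lemma \ref{lemma:unico} that $V(\alpha_0,\ldots,\alpha_{n-1})\cong h\i|\O_E(\nu^*L_0)|$ with $h(\{P_{l,j}\})=\sum D_l(P_{l,j})$, I would form the correspondence
\[
\mathcal C:=\bigl\{(\{P_{l,j}\},\mathfrak g)\in\mathcal P\x G^1_k(E)\ \big|\ C_{\mathfrak g}\ni P_{l,j}+(P_{l,j}\oplus e^{\oplus(l+1)})\ \ \forall\,l,j\bigr\},
\]
with projections $\pi_G$ and $\pi_{\mathcal P}$, and set $\mathcal P^k:=\pi_{\mathcal P}(\mathcal C)$, so that $V^k(\alpha_0,\ldots,\alpha_{n-1})$ corresponds, under the identification of Lemma \ref{lemma:unico}, to $\mathcal P^k\cap h\i|\O_E(\nu^*L_0)|$.

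I would first compute $\dim\mathcal C$ through $\pi_G$. Taking $e\in E$ general — which is harmless, as $e$ is a free parameter of the degeneration — Lemma \ref{lemma:trans}(i) guarantees that no $\mathfrak c_{e,l+1}$ is contained in any $C_{\mathfrak g}$, so every fibre of $\pi_G$ is finite; moreover for general $\mathfrak g$ the intersection $C_{\mathfrak g}\cap\mathfrak c_{e,l+1}$ consists of $C_k\cdot\mathfrak c_{e,l+1}=2k$ reduced points by Lemma \ref{lemma:trans}(ii), among which one chooses $\alpha_l$ distinct ones — possible and finite precisely by hypothesis \eqref{eq:numcond2}. Since $G^1_k(E)$ is irreducible of dimension $2k-3$, this makes $\pi_G$ dominant with finite fibres, so $\mathcal C$ is nonempty and pure of dimension $2k-3$; in particular $\mathcal P^k\neq\emptyset$ and $\dim\mathcal P^k\le\min\{2k-3,\,g-1\}$.

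Next I would transport the count to $V^k$ via the diagonal translation action $Q\cdot\{P_{l,j}\}=\{P_{l,j}\oplus Q\}$ of $E$ on $\mathcal P$. Each $\mathfrak c_{e,l+1}$ is preserved and $C_{\mathfrak g}$ is carried to $C_{\mathfrak g'}$ for a translate $\mathfrak g'\in G^1_k(E)$, so $\mathcal C$ and $\mathcal P^k$ are $E$-stable; composing $h$ with $\Sym^n(E)\to\Pic^n(E)\cong E$ yields $\overline h\colon\mathcal P^k\to E$, equivariant for multiplication by $n$. As $\x n$ is surjective, the $E$-orbit of any point of $\mathcal P^k$ already surjects onto $E$ under $\overline h$, so $\overline h$ is surjective on every irreducible component and $V^k\cong\overline h\i(c)\cap\mathcal P^k$ (with $c$ the class of $\O_E(\nu^*L_0)$) meets each component in codimension one. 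Thus $\dim V^k=\dim\mathcal P^k-1\le\min\{2k-4,\,g-2\}$; together with the lower bound $\dim Z\ge\min\{g-2,2k-4\}$ valid for every component $Z$ of $|L_0|^1_{\delta,k}$ by the proof of Proposition \ref{prop:expdimvk}, and with Lemma \ref{lemma:kgonale}, this forces every component of $|L_0|^1_{\delta,k}$ inside $V^k$ to have dimension exactly $\min\{2k-4,g-2\}$, nonemptiness being already secured. Consequently $\dim\mathcal P^k=\dim V^k+1=\min\{2k-3,g-1\}$, and the general fibre of $\pi_{\mathcal P}$ — namely $G^1_k(\overline X)$ — has dimension $(2k-3)-\min\{2k-3,g-1\}=\max\{0,\rho(g,1,k)\}$.

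It remains to prove that $G^1_k(\overline X)$ is reduced for a general $X\in V^k$, and this is the step I expect to require the most care. Over the locus of $G^1_k(E)$ where Lemma \ref{lemma:trans}(ii)--(iii) applies, $\pi_G$ is \'etale and $\mathcal C$ is smooth, so generic smoothness in characteristic zero yields a smooth (hence reduced) general fibre of $\pi_{\mathcal P}$ of the expected dimension, and one checks that this fibre agrees scheme-theoretically with $G^1_k(\overline X)$. The subtlety is that $V^k$ is the \emph{codimension-one slice} $\overline h\i(c)$ of $\mathcal P^k$, so a general member of $V^k$ need not be general in $\mathcal P^k$; I would circumvent this by noting that the smooth locus of $\pi_{\mathcal P}$ is $E$-stable (by equivariance of $\pi_{\mathcal P}$), and an $E$-stable dense open subset necessarily meets every fibre $\overline h\i(c')$ in a dense open subset, so a general $X\in V^k$ does lie over the smooth locus. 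The only other genuinely delicate ingredient is the bookkeeping of the dictionary in the first paragraph, together with the verification that Lemma \ref{lemma:trans} may indeed be invoked for the specific curves $\mathfrak c_{e,l+1}$ once $e$ is chosen general.
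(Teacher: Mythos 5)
Your proposal is correct and follows essentially the same route as the paper: the same incidence correspondence between $G^1_k(E)$ and configurations of points on the curves $\mathfrak{c}_{e,l+1}$, the same transversality input from Lemma \ref{lemma:trans}, the same translation-equivariance argument to cut out the codimension-one condition $h(\{P_{l,j}\})\in|\O_E(\nu^*L_0)|$, and the same matching of the resulting upper bound against the expected-dimension lower bound. The only cosmetic differences are that you invoke Proposition \ref{prop:expdimvk} explicitly where the paper leaves it implicit, and you obtain reducedness of the general fiber via generic smoothness rather than via reducedness of the total incidence scheme.
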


\begin{proof}
We consider the following incidence variety, which is nonempty by \eqref{eq:numcond2} and Lemma \ref{lemma:trans}(ii): 
$$
I:=\left\{\left(\mathfrak{g},\{x_{l,j}\}_{\stackrel{0 \leq l \leq n-1}{1 \leq j \leq \alpha_l}}\right)\;\left\vert\right.\mathfrak{g} \in G^1_k(E),\; x_{l,j}=P_{l,j}+(P_{l,j}\+e^{\+ l})\, \mbox{distinct points of}\,\, C_\mathfrak g\cap \mathfrak{c}_{e,l+1}\right\}.
$$
Since a general fiber of the projection $p:I\to G^1_k(E)$ is finite and reduced by Lemma \ref{lemma:trans}(ii), the scheme $I$ itself is reduced. Let $q:I\to \Sym^{\alpha_0}(E) \x \cdots \x \Sym^{\alpha_{n-1}}(E)$ and $\pi:I\to \Sym^n(E)$ map a point $(\mathfrak{g},\{x_{l,j}\}_{l,j}) \in I$ to $\{P_{l,j}\}$ and $\O_E(\sum_{l,j} D_l(P_{l,j}))$, respectively. By proceeding as in the proof of Lemma \ref{lemma:v0}, one verifies that $\pi^{-1}|\O_E(\nu^*L_0)|$ has codimension $1$ in $I$. Indeed, given $\{P_{l,j}\} \in \Sym^{\alpha_0}(E) \x \cdots \x \Sym^{\alpha_{n-1}}(E)$ such that all 
divisors $x_{l,j}=P_{l,j}+(P_{l,j} \+e^{\+(l+1)})\in \mathfrak{c}_{e,l+1}$ belong to the same curve $C_{\mathfrak{g}}$, with $\mathfrak{g}\in G^1_k(E)$, then for any point $P \in E$ also the divisors \linebreak
$(P_{l,j}\+P) +(P_{l,j} \+e^{\+(l+1)} \+ P)$ all belong to the same curve $C_{\mathfrak{g'}}$ with $\mathfrak{g'}\in G^1_k(E)$, that is, $(\mathfrak{g}',\{x_{l,j}\+ P\}_{l,j})\in I$. More precisely, this shows that $\pi^{-1}|\O_E(\nu^*L_0)|\simeq \pi^{-1}|A|$ for all line bundles $A\in \Pic^{n}(E)$ (cf. Remark \ref{cata}) and in particular $\pi^{-1}|\O_E(\nu^*L_0)|$ is reduced. Furthermore, by Lemma \ref{lemma:unico}, the scheme $q(\pi^{-1}|\O_E(\nu^*L_0)|)$ is contained in $f(V( \alpha_0,\ldots,\alpha_{n-1}))$, where $f$ is the injection in the proof of Lemma \ref{lemma:v0}. Then Definition  \ref{defn:limkgonal} and Lemma \ref{lemma:kgonale} imply that $q(\pi^{-1}|\O_E(\nu^*L_0)|)$ equals $f(V^k( \alpha_0,\ldots,\alpha_{n-1}))$ and $\pi^{-1}|\O_E(\nu^*L_0)|$ is isomorphic to the scheme $\mathcal{G}^1_k(V^k(\alpha_0,\ldots,\alpha_{n-1}))$ parametrizing pairs $(X,\mathfrak g)$ with $X=\nu(C)\in V^k(\alpha_0,\ldots,\alpha_{n-1})$ and $\mathfrak{g}\in G^1_k(\overline{X})$.

When $g >2(k-1)$, this shows that $V^k( \alpha_0,\ldots,\alpha_{n-1})$ is nonempty and equidimensional of dimension $2k-4$; moreover, for a general $X\in V^k( \alpha_0,\ldots,\alpha_{n-1})$ the variety $G^1_k(\overline{X})$ is reduced and $0$-dimensional. When $g \leq 2(k-1)$, one has $V^k( \alpha_0,\ldots,\alpha_{n-1})=
V( \alpha_0,\ldots,\alpha_{n-1})$ and $G^1_k(\overline{X})$ is reduced of dimension $2k-4-(g-2)=\rho(g,1,k)$ for a general $X\in V( \alpha_0,\ldots,\alpha_{n-1})$. The fact that $V^k( \alpha_0,\ldots,\alpha_{n-1})$ is the union of irreducible components
of
${V^k_{|L_0|,\delta}(S_0)}$ follows from Lemma \ref{lemma:kgonale}.
\end{proof}

Recall Corollary \ref{cor:esisterel} and the notation therein. We have an $f$-relative Severi variety \linebreak $\phi_{|\L|,\delta}: \V_{|\L|,\delta} \to \DD$,
where $\V_{|\L|,\delta}:=\V_{\{\L\},\delta} \cap \PP(f_*\L)$, with fibers 
$V_{|L_t|,\delta}(S_t)$. 
One can  define the {\it relative $k$-gonal loci} 
$\{\L\}^1_{\delta,k} \sub \V_{\{\L\},\delta}$ and 
$|\L|^1_{\delta,k} \sub \V_{|\L|,\delta}$ in the obvious way, by extending the moduli map \eqref{eq:uno} to the whole of $\V_{\{\L\},\delta}$. Clearly, one has $\{\L\}^1_{\delta,k}=\V_{\{\L\},\delta}$ and $|\L|^1_{\delta,k}=\V_{|\L|,\delta}$ if $g \leq 2(k-1)$. Moreover, the same argument as in the proof of Proposition \ref{prop:expdimvk} shows that the expected dimensions of $\{\L\}^1_{\delta,k}$ and 
$|\L|^1_{\delta,k}$ are $\min\{2k-1,g+1\}$ and $\min\{2k-3,g-1\}$, respectively. We therefore have:

\begin{prop}\label {degargrel}
Let $V_0$ be a  component of $|L_0|^1_{\delta,k}$ of the expected dimension 
 $\min\{2k-4,g-2\}$. Then $V_0$ is contained in an irreducible component $\V$ of $|\L|^1_{\delta,k}$ dominating $\mathbb D$ and of dimension $\dim V_0+1$.
\end{prop}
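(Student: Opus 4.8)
The plan is to realize $V_0$ as a component of the central fibre of the relative $k$-gonal locus $|\L|^1_{\delta,k}$, which sits inside the smooth total space $\V_{|\L|,\delta}$ of Corollary \ref{cor:esisterel}, and then to propagate it to a dominating component of one more dimension by a dimension count. Write $\phi\colon |\L|^1_{\delta,k}\to\DD$ for the restriction of the structure map; by construction its fibre over $t$ is $|L_t|^1_{\delta,k}$, so the fibre over $0$ is exactly $|L_0|^1_{\delta,k}$, of which $V_0$ is an irreducible component of dimension $\min\{2k-4,g-2\}$. Since $V_0$ is an irreducible closed subset of $|\L|^1_{\delta,k}$, it is contained in at least one irreducible component $\V$, and the goal is to show that this $\V$ dominates $\DD$ and has dimension exactly $\dim V_0+1$.

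The key input, and the step I expect to be the main obstacle, is a lower bound on the dimension of \emph{every} component of $|\L|^1_{\delta,k}$: namely, as recalled just before the statement, the relative analogue of the estimate \eqref{muffin} shows that each component of $|\L|^1_{\delta,k}$ has dimension at least the expected value $\min\{2k-3,g-1\}=\dim V_0+1$. This is precisely what prevents $V_0$ from being a spurious component trapped in the central fibre: applied to the component $\V\supseteq V_0$, it gives $\dim\V\geq\dim V_0+1>\dim V_0$, so $\V$ strictly contains $V_0$. I would establish this bound by running the moduli-map argument of Proposition \ref{prop:expdimvk} for the extended map $\V_{\{\L\},\delta}\to\overline{\M}_g$ against the Brill-Noether divisor $\overline{\M}^1_{g,k}$, whose codimension is $\max\{0,-\rho(g,1,k)\}$, and then intersecting with $\PP(f_*\L)$.

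Granting this, domination and the dimension equality follow formally. If $\V$ did not dominate $\DD$, it would map to a single point, necessarily $0$ since $\V$ meets the central fibre along $V_0$; then $\V\subseteq|L_0|^1_{\delta,k}$ would be an irreducible closed subset containing the irreducible component $V_0$, forcing $\V=V_0$ by maximality and contradicting $\dim\V>\dim V_0$. Hence $\phi|_{\V}$ is dominant, so the pullback to $\V$ of a uniformizer at $0$ is a nonzero regular function whose zero scheme $\V\cap\phi^{-1}(0)$ is a proper Cartier divisor; by Krull's principal ideal theorem every component of it has codimension exactly one in $\V$, hence dimension $\dim\V-1$. The component of $\V\cap\phi^{-1}(0)$ through $V_0$ is an irreducible closed subset of $|L_0|^1_{\delta,k}$ containing the component $V_0$, so by maximality it equals $V_0$; therefore $\dim V_0=\dim\V-1$, that is, $\dim\V=\dim V_0+1$. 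Together with domination, this is the assertion.
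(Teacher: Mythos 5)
Your argument is correct and is essentially the one the paper intends: the proposition is stated there as an immediate consequence of the computation that every component of $|\L|^1_{\delta,k}$ has dimension at least the expected value $\min\{2k-3,g-1\}=\dim V_0+1$ (via the moduli-map estimate of Proposition \ref{prop:expdimvk} run relatively), after which domination of $\DD$ and the exact dimension follow from maximality of $V_0$ in the central fibre and Krull's principal ideal theorem, exactly as you write. The only quibble is calling $\overline{\M}^1_{g,k}$ a ``divisor'' --- its codimension is $\max\{0,-\rho(g,1,k)\}$, as you correctly use --- but this does not affect the argument.
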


\begin{cor} \label{rem:ram2} Let $(\alpha_0,\ldots,\alpha_{n-1})$ satisfy \eqref{eq:numcond}, \eqref{eq:nodes} and  
\eqref{eq:numcond2} and let $t\in\mathbb{D}^*$ be general. Then:
\begin{itemize}
\item [(i)]  $|L_t|^1_{\delta,k}$ has a component $V_t$ of the expected dimension $\min\{2(k-2),g-2\}$, such that the limit of $V_t$ when $S_t$ tends to $S_0$ is a component of $V^k( \alpha_0,\ldots,\alpha_{n-1})$;
\item [(ii)] for a general curve $C_t$ in $V_t$ with normalization $\widetilde{C}_t$, the variety $G^1_k(\widetilde{C}_t)$ is reduced of dimension $\max\{0,\rho(g,1,k)\}$;
\item [(iii)]  a general  $g^1_k$ in any component of $G^1_k(\widetilde{C}_t)$ has simple ramification;
\item [(iv)] the $\delta$ nodes of $C_t$ are non-neutral with respect to a general  $g^1_k$ in any component of $G^1_k(\widetilde{C}_t)$.
\end{itemize}
\end{cor}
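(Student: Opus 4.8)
The plan is to transfer the information established on the degenerate fiber $(S_0,L_0)$ in Proposition \ref{prop:riempie} to the general smooth fiber $S_t$ by means of the relative $k$-gonal locus $|\L|^1_{\delta,k}$ and the associated relative Brill-Noether scheme, together with flatness and semicontinuity. Part (i) is essentially immediate: by Proposition \ref{prop:riempie} and Lemma \ref{lemma:kgonale} the locus $V^k(\alpha_0,\ldots,\alpha_{n-1})$ fills up components of $|L_0|^1_{\delta,k}$, each of the expected dimension $\min\{2k-4,g-2\}=\min\{2(k-2),g-2\}$; choosing such a component $V_0$, Proposition \ref{degargrel} produces an irreducible component $\V$ of $|\L|^1_{\delta,k}$ containing $V_0$, dominating $\DD$, of dimension $\dim V_0+1$. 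For general $t\in\DD^*$ the fibre $V_t:=\V\cap V_{|L_t|,\delta}(S_t)$ is then a component of $|L_t|^1_{\delta,k}$ of dimension $\dim V_0$ whose flat limit is $V_0\subset V^k(\alpha_0,\ldots,\alpha_{n-1})$, giving (i).

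For (ii) I would work with the relative Brill-Noether scheme over $\V$ whose fibre over $[C_t]$ is $G^1_k(\widetilde{C}_t)$, compactified over the special fibre by admissible covers (cf. Remark \ref{pencil} and \cite{HM}). By the last paragraph of the proof of Proposition \ref{prop:riempie}, its special fibre is $\G^1_k(V^k(\alpha_0,\ldots,\alpha_{n-1}))\cong\pi^{-1}|\O_E(\nu^*L_0)|$, which is reduced of dimension $\dim V_0+\max\{0,\rho(g,1,k)\}$. Since every component of this relative scheme dominates $\DD$ (as in Corollary \ref{cor:esisterel}), it is flat over the regular one-dimensional base $\DD$; a flat family over a disc with reduced special fibre has reduced total space near $t=0$, hence reduced general fibre, and by generic smoothness the general fibre $G^1_k(\widetilde{C}_t)$ of the map to $V_t$ is reduced. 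Upper semicontinuity of fibre dimension together with the standard lower bound $\dim G^1_k(\widetilde{C}_t)\geq\rho(g,1,k)$ then forces $\dim G^1_k(\widetilde{C}_t)=\max\{0,\rho(g,1,k)\}$ for general $C_t\in V_t$.

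Parts (iii) and (iv) are the delicate ones. A general $g^1_k$ on $\widetilde{C}_t$ degenerates, as $t\to0$, to an admissible cover whose restriction to the genus-one part $\Gamma\cong E$ of the stable model $\overline{X}$ (cf. Lemma \ref{lemma:stablemodel}) is a general pencil $\mathfrak{g}\in G^1_k(E)$ satisfying the compatibility at the distinguished pairs, i.e. a general point of $\pi^{-1}|\O_E(\nu^*L_0)|$. I would read off both conclusions from Lemma \ref{lemma:trans}: for general $\mathfrak{g}$ the curve $C_{\mathfrak{g}}$ meets each $\mathfrak{c}_{e,l+1}$ transversally in $2k$ distinct points, none of which is a ramification point of $\mathfrak{g}$ and none fixed as $\mathfrak{g}$ varies. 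Translating through the admissible cover, the $2k$ simple ramification points of $\mathfrak{g}$ on $E$ and the extra ramification created at the smoothed nodes of $\overline{X}$ remain distinct, yielding simple ramification of the limit, hence—by openness—of a general $g^1_k$ on $\widetilde{C}_t$; similarly, the transversality guarantees that at each of the $\delta$ marked nodes (limiting to the points $\Gamma\cap F_i$) the two branches are sent to distinct points of $\PP^1$, so these nodes are non-neutral. Openness of both conditions and semicontinuity then carry the statements to general $t$.

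The main obstacle is precisely this last translation in (iii)--(iv). One must carefully match the ramification of the smooth pencil $\widetilde{C}_t\to\PP^1$ and the neutrality of the $\delta$ nodes of $C_t$ against the intersection-theoretic genericity of $\mathfrak{g}$ on $\Sym^2(E)$ supplied by Lemma \ref{lemma:trans}, while keeping track of the distinction between the $\delta$ nodes of $C_t$, which limit to $\Gamma\cap F_i$, and the $g-1$ nodes of the stable model $\overline{X}$, which come from the identified distinguished pairs. The dimensional and reducedness assertions in (i)--(ii), by contrast, follow formally from Propositions \ref{prop:riempie} and \ref{degargrel} together with flatness and semicontinuity.
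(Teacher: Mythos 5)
Your proposal follows the paper's proof essentially verbatim: (i) from Proposition \ref{prop:riempie} combined with Proposition \ref{degargrel}, (ii) from the last statement of Proposition \ref{prop:riempie} transferred to the general fibre by flatness and semicontinuity, and (iii)--(iv) by degenerating a general $g^1_k$ on $\widetilde{C}_t$ to an admissible cover on the stable model $\overline{X}$, where the ramification splits into the $2k$ simple ramification points of a general $\mathfrak{g}$ on $E$ (via Lemma \ref{lemma:trans}) and the simple ramification arising at the smoothed nodes. The paper is in fact even terser than you are on (iii) and especially (iv), so the residual imprecision you flag there (matching the genericity on $\Sym^2(E)$ with non-neutrality of the $\delta$ marked nodes, as opposed to the $g-1$ nodes of $\overline{X}$) is shared by the original argument.
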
 

\begin{proof} By Proposition \ref {prop:riempie}, any component of $V^k( \alpha_0,\ldots,\alpha_{n-1})$ has dimension $\min\{2(k-2),g-2\}$. Then Proposition \ref{degargrel} yields (i). The last statement of Proposition \ref{prop:riempie} implies (ii). 

The stable model $\overline X$ of the partial normalization of a general curve $X$ in $V^k( \alpha_0,\ldots,\alpha_{n-1})$ at its $\delta$ marked points is obtained from $\Gamma\simeq E$ identifying $g-1$ pairs of points in a general $\mathfrak g=g^ 1_k$ on $E$ (see 
Lemmas \ref{lemma:stablemodel} and \ref{lemma:kgonale}).  The limit ramification points of a general element in $G^1_k(\overline{X})$ are 
 the ones of  $\mathfrak g$, which are simple as $\mathfrak g$ is general, in addition to the ones tending to the nodes of $\overline X$, which are simple by the definition of admissible cover. This proves (iii). 

Point (iv) can be similarly proven by degeneration to a general curve in $V^k(\alpha_1,\ldots,\alpha_n)$ and considering its admissible covers.  \end{proof}

We can now finish the proof of  Theorem \ref{thm:main2}. 

\renewcommand{\proofname}{Proof of Theorem \ref{thm:main2}}
\begin{proof}
We can assume that $\delta \leq p-2$ and $p\geq 3$. The only if part of (i) is Theorem \ref{thm:noexist}.
The rest   follows from  Corollaries \ref{cor:tuttecompdim} and \ref{rem:ram2} and the following lemma.
\end{proof}

\begin{lemma} \label{lemma:trovainteri}
  Let $n,k$ be integers with $n \geq 2$, $k \geq 2$ and let 
$0\leq \delta \leq p-2$ satisfy  \eqref{eq:boundintro}. 
Then there are integers $\alpha_{j}$ such that 
\eqref{eq:numcond}, \eqref{eq:nodes} and \eqref{eq:numcond2} hold.
\end{lemma}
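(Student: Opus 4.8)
The plan is to reduce the three numerical conditions to a single transparent combinatorial problem and then solve it by an interval-filling argument. First I would observe that, since $n-\delta=p-\delta-1=g-1$, subtracting \eqref{eq:nodes} from \eqref{eq:numcond} shows that the pair \eqref{eq:numcond}--\eqref{eq:nodes} is equivalent to
\begin{equation*}
\sum_{j=0}^{n-1}\alpha_j=g-1,\qquad \sum_{j=1}^{n-1} j\,\alpha_j=\delta .
\end{equation*}
Thus, interpreting $\alpha_j$ as the multiplicity of the value $j$, the lemma asks for a multiset of exactly $g-1$ elements of $\{0,1,\dots,n-1\}$, summing to $\delta$, in which no value occurs more than $2k$ times (this last requirement being \eqref{eq:numcond2}). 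My goal is to produce such a multiset.

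Next I would locate $\delta$ inside the range of sums attainable by such multisets. For a fixed number $g-1$ of elements with multiplicities capped at $2k$, the smallest attainable sum $s_{\min}$ is obtained by filling the lowest values first: writing $g-1=2km+s$ with $0\le s<2k$ (so that $m=\lfloor (g-1)/(2k)\rfloor$ is exactly the quantity called $\alpha$ in \eqref{eq:boundintro}, and $m<n$, so all the values used are admissible), one gets
\begin{equation*}
s_{\min}=2k\bigl(0+1+\cdots+(m-1)\bigr)+ms=km(m-1)+ms .
\end{equation*}
A direct substitution of $g-1=p-\delta-1$ identifies $km(m-1)+ms$ with the right-hand side of \eqref{eq:boundintro}, so the hypothesis \eqref{eq:boundintro} says precisely that $\delta\ge s_{\min}$. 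For the upper end it suffices to note that placing one element at the value $n-1$ and the remaining $g-2\le n$ elements anywhere legally (there is ample room, as $k\ge2$) yields a valid multiset, so the maximal attainable sum $s_{\max}$ satisfies $s_{\max}\ge n-1\ge\delta$. Hence $s_{\min}\le\delta\le s_{\max}$.

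It remains to show that every integer in $[s_{\min},s_{\max}]$ — in particular $\delta$ — is actually attained, which is the crux of the argument. I would start from the minimal configuration ($\alpha_j=2k$ for $0\le j\le m-1$, $\alpha_m=s$, and $\alpha_j=0$ otherwise) and repeatedly perform a single-unit up-move: transfer one element from a value $v$ with $\alpha_v\ge1$ to $v+1$, provided $\alpha_{v+1}<2k$, which raises the sum by exactly $1$ while preserving both the total count $g-1$ and the cap \eqref{eq:numcond2}. The main obstacle is that the cap $2k$ may block the obvious moves, so I must check that an admissible move always exists as long as the current sum is below $s_{\max}$. This follows from a structural dichotomy: if no admissible adjacent pair $(v,v+1)$ existed, then every non-full value $w\ge1$ would be forced to have $\alpha_{w-1}=0$, and a short induction shows this compels the configuration to be top-justified ($\alpha_{n-1}=\alpha_{n-2}=\cdots=2k$ down to a single partially filled value), i.e.\ to be the unique maximal configuration of sum $s_{\max}$ — contradicting the assumption that the sum is below $s_{\max}$. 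Since one starts at $s_{\min}\le\delta$ and can increment the sum one unit at a time up to $s_{\max}\ge\delta$, the value $\delta$ is attained, producing the desired integers $\alpha_j$.
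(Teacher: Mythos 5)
Your proof is correct, and it takes a genuinely different (and arguably cleaner) route than the paper's. The paper also proceeds by ``greedy base configuration plus unit up-moves'', but organizes this as an induction on $\delta$ itself: it computes the threshold $\delta_0(n,k)$ via the decomposition $n=km(m+1)+t(m+1)+\lambda$, writes down explicit $\alpha_j$ for $\delta=\delta_0$, and then passes from $\delta_1$ to $\delta_1+1$ by moving one part from the top value $h$ of the support to $h+1$. Your reformulation --- fix $\delta$, hence fix the number of parts at $\sum_j\alpha_j=g-1=n-\delta$, and raise only the weighted sum $\sum_j j\alpha_j$ from $s_{\min}$ up to $\delta$ --- is better adapted to the constraints: an up-move preserves the part count, which is exactly the invariant your setup needs, and your dichotomy argument (no admissible move forces the unique top-justified configuration of sum $s_{\max}$) correctly handles the possibility that the cap $2k$ blocks the obvious transfers. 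By contrast, in the paper's induction the part count $n-\delta$ must \emph{drop} by one at each step, whereas the displayed move keeps $\sum_j\alpha_j$ constant and therefore sends $\sum_j(j+1)\alpha_j$ from $n$ to $n+1$, so \eqref{eq:numcond} fails for the $\alpha_j'$ as written; that step needs a small repair (e.g.\ simultaneously absorbing a part of value $0$, i.e.\ merging two parts), which your bookkeeping renders unnecessary. Finally, your identification of the hypothesis \eqref{eq:boundintro} with the single inequality $\delta\geq s_{\min}=km(m-1)+ms$, where $g-1=2km+s$ and $0\le s<2k$, is a transparent way of seeing that the bound in the lemma is exactly the attainability threshold, consistent with the paper's closed form for $\delta_0(n,k)$.
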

\renewcommand{\proofname}{Proof}

\begin{proof}
The proof proceeds by induction on $\delta_0(n,k)\leq \delta \leq p-2$, with  $\delta_0(n,k)$ the minimal integer $\delta$ satisfying \eqref{eq:boundintro}. It  can be written explicitly in the following way: we first define 
\begin{equation*}
  \label{eq:defm}
 m= m(n,k):=\max\{l \in \ZZ \; | \; kl(l+1) \leq n\}, 
\end{equation*}
and
\begin{equation*}
  \label{eq:deft}
t=t(n,k):= \max\{l \in \ZZ \; | \; km(m+1)+l(m+1) \leq n\},  \;\text {i.e.},\; t= \Big\lfloor \frac{n}{m+1}\Big\rfloor-mk;
\end{equation*}
then, there is an integer $\lambda$ such that
 \begin{equation*} \label{eq:propp}
n= km(m+1) + t(m+1) +\lambda,
\end{equation*} 
and we notice that
$0 \leq t < 2k$ and $0  \leq  \lambda  \leq m$. 
An easy computation gives: 
\begin{equation*}
    \delta_0(n,k):=km(m-1)+tm+\lambda= \Big\lceil \frac{mn}{m+1}\Big\rceil-mk.
    \end{equation*}

We first provide integers $\alpha_j$ satisfying \eqref{eq:numcond2}, \eqref{eq:numcond} and \eqref{eq:nodes} for
$\delta_0(n,k)$.
 If $\lambda=0$, let  
\[ \alpha_{j}=  
\begin{cases}  
2k & \;\mbox{for} \; j=0,\ldots,m-1; \\
t & \;\mbox{for} \; j=m; \\
0 & \;\mbox{for} \; j >m.
\end{cases} 
\]
If $t =0$ and $\lambda >0$, let 
\[ \alpha_{j}=  
\begin{cases}  
2k & \;\mbox{for} \; j=0,\ldots,m-2;\\
2k-1 & \;\mbox{for} \; j=m-1;\\
1 & \;\mbox{for} \; j=m-1+\lambda;\\
0 & \;\mbox{otherwise}.
\end{cases} 
\]
Finally, if $t>0$ and $\lambda >0$, let 
\[ \alpha_{j}=  
\begin{cases}  
2k & \;\mbox{for} \; j=0,\ldots,m-1;\\
t-1 & \;\mbox{for} \; j=m;\\
1 & \;\mbox{for} \; j=m+\lambda;\\
0 & \;\mbox{otherwise}.
\end{cases} 
\]

The induction step consists in showing that, as soon some integers $\alpha_j$ satisfy \eqref{eq:numcond2}, \eqref{eq:numcond} and \eqref{eq:nodes} for $\delta=\delta_1<p-2$, there are integers $\alpha_j'$ satisfying \eqref{eq:numcond2}, \eqref{eq:numcond} and \eqref{eq:nodes} for $\delta=\delta_1+1$. Let $h$ be the largest integer such that $\alpha_h\geq 1$. Note that $h\neq n-1$ because otherwise we would have $\alpha_{n-1}=1$ and $\alpha_j=0$ for $j\neq n-1$ by \eqref{eq:numcond}, thus obtaining the contradiction $\delta_1=p-2$ by  \eqref{eq:nodes}. We set
\[ \alpha'_{j}=  
\begin{cases}  
\alpha_j-1 &  \; \mbox{if} \; j=h, \\ 
1  & \;\mbox{if} \; j=h+1, \\
\alpha_{j} & \;\mbox{otherwise},
\end{cases} 
\]
and one can easily check that these integers satisfy the desired properties.
\end{proof}

\renewcommand{\proofname}{Proof}

\begin{remark} \label{rem:cazzo}
  The fact that there is {\it one} component of $|L|^1_{\delta,k}$ satisfying (ii) in Theorem \ref{thm:main2} also follows from Corollary \ref{rem:ram2} and Lemma  \ref{lemma:trovainteri}. Corollary \ref{cor:tuttecompdim} is only needed to prove that (ii) holds on {\it all} components of $|L|^1_{\delta,k}$.
\end{remark}

\section{Linear series of type $g^r_d$ with $r\geq 2$ on smooth curves}\label{cuspidi}
In this section, we investigate the varieties $|L|^r_d$ and $\{L\}^r_d$ for $r\geq 2$, as defined in \S \ref{sec:lser}. For convenience, the Severi varieties $V_{\{L\},0}(S)$ and $V_{|L|,0}(S)$ will be denoted by $\{L\}_s$ and $|L|_s$, respectively; the same notation will be used for the degenerate abelian surface $(S_0,L_0)$.  Similarly, in the $f$-relative setting (cf. Corollary \ref{cor:esisterel}) we set $\{\L\}_s:=\V_{\{\L\},0}$ and  
$|\L|_s:=\V_{|\L|,0}=\V_{\{\L\},0} \cap \PP(f_*\L)$. 
When $S$ is smooth, we consider the schemes and morphisms
$\G^r_d(|L|) \to |L|_s$ and $\G^r_d(\{L\}) \to \{L\}_s$ with fiber over a smooth curve $C$ equal to $G^r_d(C)$.  

We proceed again by degeneration to $S_0$ and consider irreducible curves $X\in V(n,0,\ldots,0)=|L_0|_s$; such an $X$ is stable as it is obtained from an irreducible curve $\Gamma\simeq E$ lying on $R$, by identifying $n=p-1$ pairs of points $(P_1,P_1\oplus e)$, \ldots,$(P_{p-1},P_{p-1}\oplus e)$. We consider the Brill-Noether variety 
\begin{eqnarray*}
G^r_d(X):=\{\; \gg\in G^r_d(E) & | & \dim|\gg(-P_i-P_i\oplus e)|=\dim|\gg(-P_i)|= \\ & & \dim|\gg(-P_i\oplus e)|=r-1, \; \; \mbox{for} \; \; \,i=1,\ldots,p-1\}.
\end{eqnarray*}
Since $r\geq 2$, then $G^r_d(X)$ is non-compact and is open but not necessarily dense in the generalized Brill-Noether locus $\overline{G^r_d}(X)$, which also contains linear series $\gg\in G^r_{d-m}(E)$ for some $m\geq 1$ such that the condition $\dim|\gg(-P_i-P_i\oplus e)|=\dim|\gg(-P_i)|=\dim|\gg(-P_i\oplus e)|=r-1$ is satisfied only for $p-1-m$ of the indices $i$'s; in other words, $\overline{G^r_d}(X)$ parametrizes pairs $(B,V)$ where $B$ is a torsion free sheaf of rank $1$ and degree $d$ on $X$ and $V\in G(r+1, H^0(B))$.  By moving $X\in \{L_0\}_s$, we obtain the scheme 
\begin{eqnarray*}
\G^r_d(\{L_{0}(e,p)\}) :=  \{\gg\in G^r_d(E) &|& \exists\, P_1,\ldots,P_{p-1}\in E \; \mbox{such that} \; 
 \dim|\gg(-P_i-P_i\oplus e)| = \\ & & \dim|\gg(-P_i)|=\dim|\gg(-P_i\+e)|=r-1 \; \; \mbox{for} \; \; \,i=1,\ldots,p-1\},
\end{eqnarray*}
where we use the notation $L_0(e,p)$ in order to remember both the glueing parameter $e$ and the number of pairs of points we identify. Notice that the definition of $\G^r_d(\{L_0(e,p)\})$ makes sense even when $e$ is a torsion point. The variety $\G^r_d(|L_0(e,p)|)$ is defined in the same way with the further requirement that $P_1\+\cdots\+P_{p-1}$ is constant (indeed, when $e$ is not a torsion point, we have to impose that $P_1+\cdots+P_{p-1}\in |\O_E(\nu^*L_0(e,p))|$). Analogously, one defines the generalized Brill-Noether varieties $\overline{\G}^r_d(\{L_0(e,p)\})$  and $\overline{\G}^r_d(|L_0(e,p)|)$. Note that, being open in $\overline{\G}^r_d(|L_0(e,p)|)$, the scheme $\G^r_d(|L_0(e,p)|)$ has expected dimension equal to $p-2+\rho(p,r,d)$. We also denote by $\overline{|L_0(e,p)|}^r_d$ the image in $|L_0(e,p)|_s$ of the natural projection 
$$
\xymatrix{\pi_0:\overline{\G}^r_d(|L_0(e,p)|)\ar[r]& |L_0(e,p)|_s\subset |L_0(e,p)|,}$$ 
and by $|L_0(e,p)|^r_d$ the image via $\pi_0$ of $\G^r_d(|L_0(e,p)|)$. Note that $\overline{|L_0(e,p)|}^r_d$ is closed in $|L_0(e,p)|_s$ but not in $|L_0(e,p)|$.

If $e$ is not a torsion point, we look at the family of $(1,n)$-polarized abelian surfaces $f: \mathcal{S} \to \mathbb{D}$ with central fiber $S_0$ constructed in \S \ref{degenerazioni}.

\begin{prop}\label{IUIA}
If $\G^r_d(|L_0(e,p)|)$ has an irreducible component of the expected dimension $p-2+\rho(p,r,d)$ whose general point defines a birational map to $\mathbb{P}^r$, then the same holds for $\G^r_d(|L_t|)$ with $t\in\mathbb{D}$ general. 

Analogously, if $|L_0(e,p)|^r_d$ has an irreducible component $Z\subset V(n,0,\ldots,0)=|L_0(e,p)|_s$ of the expected dimension $\min\{p-2,p-2+\rho(p,r,d)\}$, then the same holds for $|L_t|^r_d$ with $t\in\mathbb{D}$ general.
\end{prop}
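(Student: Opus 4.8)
The plan is to prove both assertions by a single degeneration argument built on a \emph{relative generalized Brill--Noether scheme} over $\DD$, in the spirit of Corollary \ref{cor:esisterel} and \cite[Lemma 1.4]{CK}. Starting from the relative smooth Severi variety $|\L|_s=\V_{|\L|,0}$ of Corollary \ref{cor:esisterel}, which is irreducible of dimension $p-1$ with fibre $|L_t|_s$ over $t$, I would construct $\pi:\overline{\G}^r_d(|\L|)\to|\L|_s$ whose fibre over $[C]\in|L_t|_s$ is the generalized Brill--Noether variety $\overline{G^r_d}(C)$ of pairs $(B,V)$, with $B$ a torsion free rank one sheaf of degree $d$ on $C$ and $V\in G(r+1,H^0(B))$. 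For $t\neq 0$ the curve $C$ is smooth, so this fibre is the honest $G^r_d(C)$ and $\Psi^{-1}(\DD^*)=\bigcup_{t\neq 0}\G^r_d(|L_t|)$, where $\Psi:\overline{\G}^r_d(|\L|)\to\DD$ is the structure map; over $t=0$ the fibre is $\overline{\G}^r_d(|L_0(e,p)|)$. The decisive feature is that $\Psi$ is \emph{proper}: passing to torsion free (rather than merely locally free) sheaves with sections ensures that limits of line bundles carrying a $g^r_d$ on smooth fibres do not lose sections and land inside $\Psi^{-1}(0)=\overline{\G}^r_d(|L_0(e,p)|)$. This properness, together with the identification of the central fibre, is the technical heart of the construction.

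Set $\rho:=\rho(p,r,d)$. From the determinantal (degeneracy locus) description of the relative Brill--Noether scheme over the family $|\L|_s$ of genus $p$ curves, every irreducible component of $\overline{\G}^r_d(|\L|)$ has dimension at least the expected value $(p-1)+\rho$. Now let $\mathcal{W}_0$ be the component of $\G^r_d(|L_0(e,p)|)$ of dimension $p-2+\rho$ with birational general point furnished by hypothesis. Since $\G^r_d(|L_0(e,p)|)$ is open in $\Psi^{-1}(0)$, the closure $\overline{\mathcal{W}_0}$ is a component of $\Psi^{-1}(0)$. Choose an irreducible component $\mathcal{W}$ of $\overline{\G}^r_d(|\L|)$ containing $\overline{\mathcal{W}_0}$; then $\dim\mathcal{W}\geq p-1+\rho$. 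If $\mathcal{W}$ did not dominate $\DD$ it would lie in $\Psi^{-1}(0)$ and hence equal the maximal component $\overline{\mathcal{W}_0}$, contradicting $\dim\mathcal{W}\geq p-1+\rho>p-2+\rho$. Thus $\mathcal{W}$ dominates $\DD$, its fibre $\mathcal{W}\cap\Psi^{-1}(0)$ over the Cartier divisor $\{0\}\subset\DD$ is pure of codimension one and contains $\overline{\mathcal{W}_0}$, and maximality of $\overline{\mathcal{W}_0}$ forces $\dim\mathcal{W}=p-1+\rho$. Consequently, for general $t$ the fibre $\mathcal{W}_t=\mathcal{W}\cap\Psi^{-1}(t)\subseteq\G^r_d(|L_t|)$ is nonempty of dimension exactly $p-2+\rho$; since defining a birational map onto a nondegenerate curve in $\PP^r$ is an open condition holding at a general point of $\overline{\mathcal{W}_0}$, it holds at a general point of $\mathcal{W}_t$ too. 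This proves the first assertion.

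For the second assertion I would repeat the argument with the relative image locus $\overline{|\L|}^r_d:=\pi(\overline{\G}^r_d(|\L|))\subseteq|\L|_s$, proper over $\DD$, with central fibre $\overline{|L_0(e,p)|}^r_d$ and general fibre $|L_t|^r_d$; here the uniform lower bound $\dim\geq\min\{p-1,p-1+\rho\}$ on components is supplied by the relative version of Proposition \ref{prop:expdimvk}. Given the component $Z$ of $|L_0(e,p)|^r_d$ of expected dimension $\min\{p-2,p-2+\rho\}$, the identical purity and domination reasoning yields a component $\mathcal{Z}$ of $\overline{|\L|}^r_d$ dominating $\DD$ whose general fibre is a component of $|L_t|^r_d$ of the expected dimension. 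Alternatively, when $\rho<0$ one deduces this from the first assertion, since the birational $g^r_d$'s produced there are isolated in $G^r_d(C)$ (as in Theorem \ref{thm:main4}(ii)), so the projection $\pi_t$ is generically finite on $\mathcal{W}_t$ and preserves its dimension. I expect the crux to be the first paragraph: establishing that $\Psi$ is proper and that $\Psi^{-1}(0)$ is precisely $\overline{\G}^r_d(|L_0(e,p)|)$, i.e.\ controlling which torsion free limit sheaves and spaces of sections occur on the nodal central curves, so that no $g^r_d$ on a nearby smooth curve specializes outside the scheme we have described.
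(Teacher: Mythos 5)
Your argument follows essentially the same route as the paper's: a relative scheme $\overline{\G}^r_d(|\L|)\to\DD$ with central fibre $\overline{\G}^r_d(|L_0(e,p)|)$ and general fibre $\G^r_d(|L_t|)$, the determinantal lower bound $p-1+\rho(p,r,d)$ on the dimension of every component of the total space, the observation that $\G^r_d(|L_0(e,p)|)$ is open in the central fibre (so your $\overline{\mathcal{W}_0}$ really is a component of $\Psi^{-1}(0)$ of the minimal possible dimension), semicontinuity of fibre dimension, and openness of the birationality condition; the second assertion is likewise handled via the image $\pi(\overline{\G}^r_d(|\L|))$ with expected dimension $\min\{p-1,p-1+\rho\}$, exactly as in the paper.

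One correction, though: the feature you single out as ``the technical heart'' --- properness of $\Psi$ --- is false, and the paper says so explicitly in the proof of Theorem \ref{thm:main4}(i): $\Psi$ factors through $\V_{|\L|,0}=|\L|_s\to\DD$, and $|\L|_s$ is only \emph{open} in $\PP(f_*\L)$ (a family of smooth curves in $|L_t|$ can degenerate to a curve of $|L_0|$ lying outside $V(n,0,\ldots,0)$, e.g.\ one containing fibres of the ruling). Fortunately your actual logical chain never uses properness: for the implication ``central fibre $\Rightarrow$ general fibre'' you only need the dimension lower bound to force the component $\mathcal{W}\supseteq\overline{\mathcal{W}_0}$ to dominate $\DD$, plus semicontinuity to cap the general fibre dimension at $p-2+\rho$. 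Properness (or a substitute for it) only becomes an issue for the converse direction --- propagating \emph{emptiness} from the central fibre outward --- which the paper treats separately and with extra care precisely because $\Psi$ is not proper. So the proof stands, but you should delete the properness claim and, in the final step, make explicit that a component of $\mathcal{W}_t$ is a component of $\G^r_d(|L_t|)$ (e.g.\ by applying upper semicontinuity of $x\mapsto\dim_x\Psi^{-1}(\Psi(x))$ at a general point of $\mathcal{W}_0$), since a priori it might sit inside a larger component of the fibre coming from a different component of the total space.
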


\begin{proof}
We use the existence of a scheme $s:\overline{\G}^r_d(|\L|)\to \mathbb{D}$ with general fiber equal to $\G^r_d(\vert L_t\vert)$ and central fiber $\overline{\G}^r_d(|L_0(e,p)|)$, along with a projection $\pi:\overline{\G}^r_d(|\L|)\to |\L|_s=\V_{|\L|,0}$. The image of $\pi$ defines a scheme $t:|\L|^r_d\to \mathbb{D}$ with general fiber given by $|L_t|^r_d$ and central fiber equal to $\overline{|L_0(e,p)|}^r_d$. As in the proof of Proposition \ref{prop:expdimvk}, one shows that the expected dimension of $|\L|^r_d$ is $\min\{p-1, p-1+\rho(p,r,d)\}$, and similarly  the  expected dimension of $\overline{\G}^r_d(|\L|)$ is
$p-1+\rho(p,r,d)$. 

Since $\G^r_d(|L_0(e,p)|)$ (respectively, $|L_0(e,p)|^r_d$) is open (not necessarily dense) in $\overline{\G}^r_d(|L_0(e,p)|)$ (resp. $\overline{|L_0(e,p)|}^r_d$), the statement follows from upper semicontinuity (as both $t$ and $s$ are locally of finite type) along with the fact that birational linear series form an open subscheme of $\overline{\G}^r_d(|\L|)$.
\end{proof}

In order to study $|L_0(e,p)|^r_d$ and $\G^r_d(|L_0(e,p)|)$ for a general $e\in E$, we perform a further degeneration, namely, we let $e$ approach the neutral element $e_0\in E$. This method proves helpful:

\begin{lemma}\label{neve}
If $\overline{\G}^r_d(|L_0(e_0,p)|)$ has an irreducible component of the expected dimension whose general element is birational, then the same holds for $\overline{\G}^r_d(|L_0(e,p)|)$ with $e\in E$ general. The dimensional statement still holds if one replaces $\overline{\G}^r_d(|L_0(e_0,p)|)$ and $\overline{\G}^r_d(|L_0(e,p)|)$ with $|L_0(e_0,p)|^r_d$ and $|L_0(e,p)|^r_d$, respectively.
\end{lemma}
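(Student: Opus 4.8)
The plan is to degenerate the gluing parameter $e$ to $e_0$ and argue by upper semicontinuity, exactly in the spirit of the proof of Proposition \ref{IUIA} (where the base of the degeneration was the disc $\DD$). Here the natural base is the elliptic curve $E$ itself, which is one-dimensional, with distinguished point $e_0$. First I would assemble the generalized Brill--Noether schemes into a single family $s\colon \overline{\G}^r_d(|L_0(\cdot,p)|)\to E$, locally of finite type, whose fibre over $e\in E$ is $\overline{\G}^r_d(|L_0(e,p)|)$; concretely this is the scheme of data $\big(e,\gg,(P_1,\ldots,P_{p-1})\big)$ with $\gg\in G^r_d(E)$ and $P_1+\cdots+P_{p-1}\in|\O_E(\nu^*L_0(e,p))|$ satisfying the rank conditions in the definition of $\overline{\G}^r_d(|L_0(e,p)|)$, which degenerate at $e=e_0$ to the cuspidal-ramification conditions $\dim|\gg(-2P_i)|=\dim|\gg(-P_i)|=r-1$. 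Taking images under the projection $\pi_0$ produces the analogous family whose fibre is $|L_0(e,p)|^r_d$.

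Set $D:=p-2+\rho(p,r,d)$, the expected dimension of $\overline{\G}^r_d(|L_0(e,p)|)$. Since these schemes are cut out by the determinantal conditions of Brill--Noether theory with prescribed (possibly cuspidal) ramification, every nonempty component of every fibre of $s$ has dimension $\geq D$, and every component of the total space $\overline{\G}^r_d(|L_0(\cdot,p)|)$ has dimension $\geq D+1=D+\dim E$. These two lower bounds, valid uniformly up to $e=e_0$, are the only inputs the semicontinuity argument needs.

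Now let $W_0$ be the given component of $\overline{\G}^r_d(|L_0(e_0,p)|)$ with $\dim W_0=D$ and birational general element, and fix a general point $x\in W_0$. Near $x$ the special fibre $s^{-1}(e_0)$ coincides with $W_0$, so the fibre of $s$ has local dimension exactly $D$ at $x$. If every component of $\overline{\G}^r_d(|L_0(\cdot,p)|)$ through $x$ were contained in $s^{-1}(e_0)$, then $\dim_x\overline{\G}^r_d(|L_0(\cdot,p)|)=D$, contradicting the total lower bound $\geq D+1$; hence some component $\mathcal{W}\ni x$ dominates $E$. For this $\mathcal{W}$ the fibre-dimension theorem gives $D=\dim_x\big(\mathcal{W}\cap s^{-1}(e_0)\big)\geq \dim\mathcal{W}-1\geq D$, whence $\dim\mathcal{W}=D+1$ and the general fibre of $\mathcal{W}\to E$ has dimension $D$. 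Together with the fibrewise bound $\geq D$, this general fibre is equidimensional of dimension exactly $D$, and thus exhibits a component of $\overline{\G}^r_d(|L_0(e,p)|)$ of the expected dimension for $e\in E$ general.

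Finally, the locus of birational linear series is open in $\overline{\G}^r_d(|L_0(\cdot,p)|)$ (cf.\ the proof of Proposition \ref{IUIA}); it contains $x$ and therefore meets $\mathcal{W}$ in a dense open subset, so the general element of the component just produced is birational. The dimensional statement for $|L_0(e,p)|^r_d$ follows by running the identical sandwich on the $\pi_0$-image family, with $D$ replaced by $\min\{p-2,p-2+\rho(p,r,d)\}$ and the birationality step omitted. The step I expect to be most delicate is the first one: building the family over $E$ and checking that the determinantal lower bounds genuinely persist at $e=e_0$, i.e.\ that the limiting conditions are exactly the cuspidal-ramification conditions and impose the expected codimension; once this is secured, the rest is a routine dimension count.
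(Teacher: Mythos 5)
Your proposal is correct and follows essentially the same route as the paper, whose entire proof is the one-line observation that there is a scheme in $G^r_d(E)\times E$ of the expected dimension whose fibre over $e\in E$ under the second projection is the Brill--Noether scheme for gluing parameter $e$; your write-up simply makes explicit the family, the determinantal lower bounds, and the standard dimension count that the paper leaves implicit. The only step you elide is that a $D$-dimensional component of a general fibre of $\mathcal{W}\to E$ is a full component of $\overline{\G}^r_d(|L_0(e,p)|)$ rather than a proper subvariety of a larger one; this follows from upper semicontinuity of the local fibre dimension $y\mapsto\dim_y s^{-1}(s(y))$, which equals $D$ at your chosen general point $x$ of $W_0$ and hence on a dense open subset of $\mathcal{W}$, so it is a harmless omission rather than a genuine gap.
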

\begin{proof}
The statement follows from the existence of a scheme $\overline{\G}^r_d\subset G^r_d(E)\times E$ of expected dimension $p+\rho(p,r,d)$
such that the fiber of the second projection $\overline{\G}^r_d\to E$ over a point $e\in E$ coincides with $\overline{\G}^r_d(|L_0(e,p)|)$.
\end{proof} 
 
The study of the Brill-Noether locus $|L_0(e_0,p)|^r_d$ translates into a problem of linear series with prescribed ramification; we recall some basic theory, first developed by Eisenbud and Harris in \cite{EH1,EH2}. Let $C$ be a smooth irreducible curve of genus $g$ and fix $\gg=(A,V)\in G^r_d(C)$ and $P\in C$. The vanishing sequence $\underline{a}(\gg,P)=(a_0(\gg,P),\ldots,a_{r}(\gg,P))$ of $\gg$ at $P$ is obtained by ordering increasingly the set $\{\mathrm{ord}_P\sigma\}_{\sigma\in V}$ while the ramification sequence of $\gg$ at $P$ is defined as: $$\underline{b}(\gg,P):=\underline{a}(\gg,P)-(0,1,\ldots,i,\ldots,r).$$ Having fixed $m$ ramification sequences $\underline{b}^1,\ldots,\underline{b}^m$ and $m$ points $P_1,\ldots,P_{m}\in C$, one defines the variety of linear series with ramification at least $\underline{b}^i$ in $P_i$  as
$$G^r_d(C,( P_i,\underline{b}^i)_{i=1}^m):=\{\gg\in G^r_d(C)\;|\; \underline{b}(\gg,P_i)\geq \underline{b}^i\textrm{ for }i=1,\ldots,m\};$$
this has expected dimension equal to the {\it adjusted Brill-Noether number} $\rho(g,r,d)-\sum_{i=1}^m\sum_{j=0}^rb^i_j$. 

The condition $\gg\in\G^r_d(\{L_0(e_0,p)\})$ is equivalent to the existence of $p-1$ points $P_1,\ldots,P_{p-1}$ such that $\underline{b}(\gg,P_i)=(0,1,\ldots,1)$; in particular, $\gg$ should lie in the variety $G^r_d(E,(P_i,(0,1,\ldots,1))_{i=1}^{p-1})$, which has expected dimension equal to $$\rho(1,r,d)-r(p-1)=\rho(p,r,d).$$ If $\gg$ is base point free and defines a birational map $\phi_{|\mathfrak{g}|}:E\to\mathbb{P}^r$, we are requiring that the curve $\phi_{|\gg|}(E)$ has $p-1$ cusps at the images of the points $P_i$.
\begin{remark}\label{warwick}
Every linear series $\gg$ of type $g^r_d$ on a smooth curve $C$ of genus $g$ satisfies
$$
\sum_{P\in C}\sum_{j=0}^rb_j(\gg,P)=(r+1)d+(r+1)r(g-1)
$$
by the Pl\"ucker Formula (cf. \cite{EH1}). Applying this to the elliptic curve $E$, we obtain precisely the condition \eqref{liscio} as  
necessary condition for nonemptiness of $G^r_d(E,(P_i,(0,1,\ldots,1))_{i=1}^{p-1})$. 
\end{remark}

We first deal with the cases where $\rho(p,r,d)\geq 0$.
\begin{thm}\label{minestrone}
Let $P_1,\ldots,P_{p-1}\in E$ be general. Then $G^r_d(E,(P_i,(0,1,\ldots,1))_{i=1}^{p-1})\neq \emptyset$ if and only if $\rho(p,r,d)\geq 0$. Moreover, as soon as it is nonempty, $G^r_d(E,(P_i,(0,1,\ldots,1))_{i=1}^{p-1})$ has the expected dimension $\rho(p,r,d)$, and a general $\gg\in G^r_d(E,(P_i,(0,1,\ldots,1))_{i=1}^{p-1})$ satisfies
$
\underline{b}(\gg,P_i)=(0,1,\ldots,1)$ for $i=1,\ldots,p-1$.
\end{thm}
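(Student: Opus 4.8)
The plan is to read Theorem~\ref{minestrone} as the genus~$1$ instance of the Brill--Noether theorem with prescribed ramification of Eisenbud and Harris \cite{EH1,EH2}, the ramification imposed at each of the general points $P_1,\dots,P_{p-1}$ being the cuspidal profile $\underline{b}=(0,1,\dots,1)$. First I would set up the numerical dictionary: the ramification sequence $(0,1,\dots,1)$ corresponds to the vanishing sequence $(0,2,3,\dots,r+1)$, so each $P_i$ contributes $\sum_{j=0}^{r}b_j=r$ to the adjusted Brill--Noether number. Thus the expected dimension of $G^r_d(E,(P_i,(0,1,\dots,1))_{i=1}^{p-1})$ is the adjusted number $\rho(1,r,d)-r(p-1)$, which equals $\rho(p,r,d)$ by the identity already recorded just before the statement. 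In particular the numerical threshold for nonemptiness should be precisely $\rho(p,r,d)\geq 0$, and the Pl\"ucker formula of Remark~\ref{warwick} recovers the weaker necessary inequality \eqref{liscio} as a sanity check.

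For the dimension, the inequality $\dim\geq\rho(p,r,d)$ at every nonempty component is automatic, since a variety of linear series with prescribed ramification is locally cut out inside $G^r_d(E)$ by Schubert conditions and so has codimension at most the sum of their expected codimensions. The content is therefore the matching upper bound together with nonemptiness when $\rho(p,r,d)\geq 0$, i.e. the pointed Brill--Noether theorem. The one point requiring care is that $E$ is a \emph{fixed} elliptic curve whereas \cite{EH1,EH2} are phrased for a general curve; but in genus~$1$ the locus $G^r_d(E)$ is the Grassmann bundle $\Gr(r+1,H^0(A))$ over $\Pic^d(E)\cong E$ for \emph{every} $E$ (because $h^0(A)=d$ is constant for $\deg A=d\geq 1$), so every elliptic curve is Brill--Noether general and the genericity that is actually used is only that of the points $P_i$.

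I would run the argument by induction on the number of imposed cusps, starting from $G^r_d(E)$ itself and adding one general point at a time. Granting the statement for $p-2$ general points, consider the incidence variety
\[
\Sigma:=\Big\{(\gg,P)\in G^r_d\big(E,(P_i,(0,1,\dots,1))_{i=1}^{p-2}\big)\times E \ \Big|\ \underline{b}(\gg,P)\geq(0,1,\dots,1)\Big\}
\]
with its projections to $G^r_d(E,(P_i)_{i=1}^{p-2})$ and to $E$. The fibre of the second projection over a general $P_{p-1}$ is exactly $G^r_d(E,(P_i)_{i=1}^{p-1})$, so the inductive step reduces to showing that imposing one further cusp at a general point cuts the dimension down by exactly $r$, and that $\Sigma\to E$ is dominant precisely when the would-be dimension $\rho(p,r,d)$ is nonnegative (dominance failing forces the general fibre, hence the variety for general points, to be empty when $\rho(p,r,d)<0$).

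The main obstacle is exactly this transversality input. On a fixed curve the osculating flags at the $P_i$ cannot be moved independently, so Kleiman's theorem does not apply, and one must prove that the $p-1$ cusp conditions are independent, i.e. cut out the expected codimension $r(p-1)$; this is the substantive part of the Eisenbud--Harris theory, which I would import through their degeneration of $E$ to a totally degenerate curve, where limit linear series reduce the count to the transversality of Schubert conditions with respect to osculating flags of a rational normal curve at general points. Finally, the statement that a general $\gg$ has ramification \emph{exactly} $(0,1,\dots,1)$ at each $P_i$ follows formally once the dimension is known: the locus where some $\underline{b}(\gg,P_i)$ strictly exceeds $(0,1,\dots,1)$ is itself a variety of linear series with strictly larger total ramification, hence of strictly smaller expected---and, by the theorem just proved, actual---dimension, so it is a proper closed subset.
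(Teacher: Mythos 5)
Your overall route coincides with the paper's: translate the cusp conditions into the adjusted Brill--Noether number $\rho(1,r,d)-r(p-1)=\rho(p,r,d)$, invoke the Eisenbud--Harris pointed Brill--Noether theorem for the dimension statement (and hence for emptiness when $\rho(p,r,d)<0$), and deduce the exact-ramification claim by comparing with the strictly smaller-dimensional loci where some $\underline{b}(\gg,P_k)$ exceeds $(0,1,\dots,1)$ --- this last step is precisely how the paper concludes. The genuine gap is in the nonemptiness direction when $\rho(p,r,d)\geq 0$. You assert that ``the numerical threshold for nonemptiness should be precisely $\rho(p,r,d)\geq 0$'' and, in your induction, that $\Sigma\to E$ is dominant exactly when $\rho(p,r,d)\geq 0$; but neither the pointed Brill--Noether theorem nor the transversality you propose to import supplies this. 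For prescribed ramification at general points, nonnegativity of the adjusted Brill--Noether number is \emph{necessary} but not in general \emph{sufficient} for nonemptiness: sufficiency is equivalent to the nonvanishing of the corresponding product of Schubert classes, which must be checked for the particular profile $(0,1,\dots,1)$. Transversality only guarantees that the intersection has the expected dimension \emph{if} it is nonempty; it does not produce a point. Concretely, your inductive step needs $\Sigma\neq\emptyset$, i.e., that a general $\gg$ with $p-2$ prescribed cusps acquires a further point of cuspidal ramification somewhere on $E$, and nothing in your argument excludes that every such $\gg$ is an immersion away from $P_1,\dots,P_{p-2}$.

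The paper closes exactly this gap: by \cite[Thm.~5.42]{HM}, nonemptiness for general points is equivalent to $(\sigma_{0,1,\ldots,1})^{p}\neq 0$ in $H^*(\mathbb{G}(r,d),\mathbb{Z})$, and an explicit Littlewood--Richardson computation identifies this nonvanishing with the inequality $\rho(p,r,d)\geq 0$ --- a coincidence special to the cuspidal profile, not a formal consequence of the dimension theory. To complete your argument you would need to add this Schubert computation, or equivalently cite the genus-zero cuspidal existence theorem of \cite{EH}, which encodes the same product. A secondary, more minor point: your justification that the statement holds for \emph{every} elliptic curve (namely that $G^r_d(E)$ is always the full Grassmann bundle over $\Pic^d(E)$) only covers the unpointed theory; the dimensional transversality of the osculating-flag conditions at general points of a \emph{fixed} $E$ is an additional assertion. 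This is harmless in the application, where $E$ may be taken general, but it should not be elided.
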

\begin{proof}
The dimension statement is \cite[Thm. 1.1] {EH2}. Furthermore, nonemptiness is equivalent to $(\sigma_{0,1,\ldots,1})^p\neq 0$
in the cohomology ring $H^*(\mathbb{G}(r,d),\mathbb{Z})$ (cf. \cite[Thm. 5.42]{HM}). This condition is the same as the one ensuring existence on a general pointed curve $[C,P]\in\M_{p-1,1}$ of a $g^r_d$ with a cusp at $P$  (cf. \cite[Cor. 5.43]{HM}) and, by the Littlewood-Richardson rule, it is equivalent to:
$$
(p-1-d+r)+\sum_{j=1}^r(p-d+r)\leq p-1,
$$
that is, $\rho(p,r,d)\geq 0$. 

Let $\gg\in G^r_d(E,(P_i,(0,1,\ldots,1))_{i=1}^{p-1})$ be general. If we had $\underline{b}(\gg,P_k)>(0,1,\ldots,1)$ for some $k$, then $\gg$ would lie in the variety $G_1:=G^r_d(E,(P_i,(0,1,\ldots,1))_{i\neq k}, (P_k,\underline{b}(\gg,P_k)))$; this is a contradiction as $\dim\,G_1=\rho(p,r,d)+r-\sum_{j=0}^r b_j(\gg,P_k)<\dim G^r_d(E,(P_i,(0,1,\ldots,1))$ by \cite[Thm. 1.1] {EH2}.
\end{proof}

\begin{cor}\label{iuia}
If $\rho(p,r,d)\geq 0$, then $\G^r_d(|L_0(e_0,p)|)$ dominates $|L_0(e_0,p)|$; furthermore, every irreducible component of $\overline{\G}^r_d(|L_0(e_0,p)|)$ dominating $|L_0(e_0,p)|$ lies in the closure of $\G^r_d(|L_0(e_0,p)|)$ and has the expected dimension $p-2+\rho(p,r,d)$. In particular, if $X\in |L_0(e_0,p)|_s$ is general, then $G^r_d(X)$ is dense in $\overline{G^r_d}(X)$ and $\dim\overline{G^r_d}(X)=\rho(p,r,d)$.

If instead $\rho(p,r,d)<0$, then $\overline{|L_0(e_0,p)|}^r_d$ has codimension at least $1$ in $|L_0(e_0,p)|$.
\end{cor}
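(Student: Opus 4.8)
The plan is to read off both assertions from the stratification of the generalized Brill--Noether fiber $\overline{G^r_d}(X)$ recalled just before the statement. A point $X\in |L_0(e_0,p)|_s$ is the cuspidal curve obtained from $E$ by creating cusps at $p-1$ points $P_1,\dots,P_{p-1}$ subject to $P_1\+\cdots\+P_{p-1}$ being the fixed constant, and $\overline{G^r_d}(X)$ is stratified by the number $m$ of cusps at which the torsion free sheaf fails to be locally free. The first step is the degree bookkeeping at a cusp (a planar singularity): a sheaf that is non locally free at exactly $m$ of the $P_i$ is the pushforward of a line bundle of degree $d-m$ from the partial normalization, hence corresponds to a $\gg\in G^r_{d-m}(E)$ having a cusp, i.e.\ ramification sequence $\geq(0,1,\dots,1)$, at the remaining $p-1-m$ points. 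Thus the stratum indexed by $m$ is a finite union of copies of $G^r_{d-m}\!\big(E,(P_i,(0,1,\dots,1))_{i\in S}\big)$ with $\#S=p-1-m$.

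Next I would compute the dimension of each stratum. Applying Theorem~\ref{minestrone} (that is, \cite[Thm.~1.1]{EH2}) to the $p-1-m$ points, for general $X$ this stratum is nonempty exactly when its adjusted Brill--Noether number is nonnegative, and then has precisely that dimension, which a short computation identifies:
\begin{equation*}
\rho(1,r,d-m)-r(p-1-m)=\rho(p-m,r,d-m)=\rho(p,r,d)-m.
\end{equation*}
One has to verify that the points produced by a general $X$ are general in the sense of \cite{EH2}: the summation map $E^{p-1}\to E$ is smooth and surjective, so the fiber $\{P_1\+\cdots\+P_{p-1}=\mathrm{const}\}$ parametrizing $|L_0(e_0,p)|_s$ meets the dense open locus where \cite{EH2} applies, and any $(p-1-m)$-element subset of general points is again general. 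Hence, for general $X$, the stratum $m=0$---namely $G^r_d(X)$---is the unique one of top dimension $\rho(p,r,d)$, every stratum with $m\geq1$ having strictly smaller dimension $\rho(p,r,d)-m$.

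Assume now $\rho(p,r,d)\geq0$. I would combine the upper bound above with the standard determinantal lower bound for (generalized) Brill--Noether schemes: realized as a degeneracy locus over the relative compactified Jacobian---which is well behaved because cusps are planar---the scheme $\overline{\G}^r_d(|L_0(e_0,p)|)$ has every irreducible component of dimension at least the expected value $p-2+\rho(p,r,d)$. Stratifying the total space by $m$ bounds each stratum above by $p-2+(\rho(p,r,d)-m)$, so the lower bound forces $\overline{\G}^r_d(|L_0(e_0,p)|)$ to be pure of dimension $p-2+\rho(p,r,d)$, with every stratum $m\geq1$ (of strictly smaller dimension) lying in the closure of the locally free stratum $\G^r_d(|L_0(e_0,p)|)$. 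This yields simultaneously that $\G^r_d(|L_0(e_0,p)|)$ is dense, that every component dominating $|L_0(e_0,p)|$ equals its closure and has the expected dimension, and---taking fibers over a general $X$---that $G^r_d(X)$ is dense in $\overline{G^r_d}(X)$ with $\dim\overline{G^r_d}(X)=\rho(p,r,d)$. Domination is then automatic, the general fiber $G^r_d(X)$ being nonempty of dimension $\rho(p,r,d)$ by Theorem~\ref{minestrone}.

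If instead $\rho(p,r,d)<0$, the same stratum count finishes the argument at once: by Theorem~\ref{minestrone} the stratum indexed by $m$ can be nonempty, for general $X$, only when $\rho(p,r,d)-m\geq0$, which fails for every $m\geq0$; hence $\overline{G^r_d}(X)=\emptyset$ for general $X$, the projection $\pi_0$ is not dominant, and $\overline{|L_0(e_0,p)|}^r_d$ has codimension at least $1$. The step I expect to be most delicate is the purity/density argument in the case $\rho(p,r,d)\geq0$: the dimension count alone leaves room for a spurious lower-dimensional dominating component supported on the non locally free strata, and it is exactly the determinantal lower bound on every component that excludes this, once the degree bookkeeping at the cusps has been set up correctly.
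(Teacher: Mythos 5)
Your proposal is correct and follows essentially the same route as the paper: stratify $\overline{G^r_d}(X)$ by the number $m$ of cusps at which the torsion free sheaf fails to be locally free, identify the $m$-th stratum with a variety of linear series $G^r_{d-m}(E,(P_i,(0,1,\ldots,1))_{i\in S})$, and apply Theorem \ref{minestrone} together with the identity $\rho(p-m,r,d-m)=\rho(p,r,d)-m$; you also make explicit the determinantal lower bound on components of $\overline{G^r_d}(X)$ that the paper leaves implicit in the density step, which is a genuine (and welcome) clarification. The one step whose justification does not hold up as written is the genericity of the constrained points: smoothness and surjectivity of the summation map $E^{p-1}\to E$ do not by themselves guarantee that a given fiber meets a prescribed dense open subset, since a dense open set can avoid an entire divisor. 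The correct repair --- and the mechanism the paper actually uses --- is translation-equivariance: the locus of tuples where Theorem \ref{minestrone} applies is invariant under the diagonal translation action $Q\cdot(P_1,\ldots,P_{p-1})=(P_1\+Q,\ldots,P_{p-1}\+Q)$, and since $Q\mapsto Q^{\+(p-1)}$ is surjective on $E$, each orbit dominates $\Pic^{p-1}(E)$ under the summation map, so every fiber meets that locus; equivalently, all fibers of $\G^r_d(\{L_0(e_0,p)\})\to\Pic^{p-1}(E)$ are isomorphic. With that substitution your argument is complete.
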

\begin{proof}
The scheme $\G^r_d(|L_0(e_0,p)|)$ is a fiber of the map $\G^r_d(\{L_0(e_0,p)\})\to \Pic^{p-1}(E)\simeq E$ sending a linear series $\gg$ to the line bundle $\O_E(P_1+\cdots+P_{p-1})$.  By varying the points $P_1,\ldots, P_{p-1}\in E$, Theorem \ref{minestrone} then ensures the existence of an irreducible component of $\G^r_d(|L_0(e_0,p)|)$ dominating $|L_0(e_0,p)|$ when $\rho(p,r,d)\geq 0$ and implies that any such component has the expected dimension. If instead $\rho(p,r,d)<0$, then $|L_0(e_0,p)|^r_d$ has codimension at least $1$ in $|L_0(e_0,p)|$. 

We claim that $G^r_d(X)$ is dense in $\overline{G^r_d}(X)$ if $X\in |L_0(e_0,p)|_s$ is general; this would conclude the proof. If $\gg\in \overline{G^r_d}(X)\setminus G^r_d(X)$, then $\gg\in G^r_{d-m}(\widetilde{X})$ where $\widetilde{X}$ is a partial normalization of $X$ at $m$ points, say $P_{p-m},\ldots,P_{p-1}\in E$. In other words, $\gg\in G^r_{d-m}(E,(P_i,(0,1,\ldots,1))_{i=1}^{p-1-m})$. As $P_1,\ldots,P_{p-1-m}\in E$ are general, Theorem \ref{minestrone} yields that $G^r_{d-m}(E,(P_i,(0,1,\ldots,1))$ is nonempty as soon as $\rho(p-m,r,d-m)\geq 0$ and, if nonempty, it has the expected dimension $\rho(p-m,r,d-m)$. The inequality $\rho(p-m,r,d-m)<\rho(p,r,d)$ yields our claim.
\end{proof}

The proof of Theorem \ref{thm:main4}(i) is now straightforward.

\renewcommand{\proofname}{Proof of Theorem \ref{thm:main4}(i)}

\begin{proof}
Combine Corollary \ref{iuia} with Lemma \ref{neve} and Proposition \ref{IUIA}. Since the projection $p:\overline{\G}^r_d(|\L|)\to \mathbb{D}$ is not proper (indeed, it factorizes through the map $\V_{|\L|,0}=|\L|_s\to\mathbb{D}$ and $|\L|_s$ is open in $\mathbb{P}(f_*\L)$), in order to prove the emptiness statement one should also verify that, if a component $\G$ of $\overline{\G}^r_d(|\L|)$ dominates $|L_t|$ for a general $t\in \mathbb{D}$, then it also dominates $|L_0|$ and hence contains a component of $\G^r_d(|L_0(e,p)|)$. This follows because a component $Z$ of $\overline{|\L|^r_d}$ (where $\overline{|\L|^r_d}$ denotes the closure of $|\L|^r_d$ in $\mathbb{P}(f_*\L)$) containing the whole $|L_t|$ for a general $t\in \mathbb{D}$ also contains the whole $|L_0(e,p)|$. 
\end{proof}

\renewcommand{\proofname}{Proof}

We will now focus on the cases where $\rho(p,r,d)<0$. 
Under the further assumption that $d\geq r(r+1)$, we will construct a component of $\G^r_d(|L_{0}(e_0,p)|)$ of the expected dimension. This is done by means of the duality for nondegenerate curves in $\mathbb{P}^r$, first discovered by Piene \cite{Pi}; we recall the main results and refer to \cite{Do,KS,Pi} for details. 

Let $Y\subset\mathbb{P}^r$ be a nondegenerate curve of degree $d$ and denote by by $f:\widetilde{Y}\to Y$ the normalization map. The dual map $f^\vee: \widetilde{Y}\to (\mathbb{P}^r)^\vee$ assigns to a point $P\in \widetilde{Y}$ the osculating hyperplane of $Y$ at $f(P)$; the image $Y^\vee:=f^\vee(\widetilde{Y})$ is then called the dual curve of $Y$. Let $\gg$ be the linear series on $\widetilde{Y}$ defining $f$ and, for any point $P\in \widetilde{Y}$, rewrite the ramification sequence of $\gg$ at $P$ as
$$
\underline{a}(\gg,P)=(0,k_1(P),\ldots,\sum_{j=1}^{i}k_j(P),\ldots,\sum_{j=1}^{r}k_j(P)).
$$
By setting $k_i=\sum_{P\in \widetilde{Y}} k_i(P)$, we obtain an $r$-tuple of nonnegative integers $(k_1,\ldots,k_r)$ associated with $\gg$. The $r$-tuple $(k_1^*,\ldots,k_r^*)$ is defined analogously starting from the linear series $\gg^*$ on $\widetilde{Y}$ corresponding to $f^\vee$. Having fixed our notation, we recall the duality theorem \cite[Thm. 5.1]{Pi}.
\begin{prop}\label{piene}
The following hold:
\begin{itemize}
\item[(i)] the dual curve $Y^\vee$ has degree $d^*:=rd-\sum_{i=1}^{r-1}(r-i)k_i$;
\item[(ii)] for any point $P\in \widetilde{Y}$ and $1\leq i\leq r$, one has $k_i^*(P)=k_{r+1-i}(P)$;
\item[(iii)] $(f^\vee)^\vee=f$.
\end{itemize}
\end{prop}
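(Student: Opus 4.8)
This is Piene's duality theorem \cite[Thm.~5.1]{Pi}, so the plan is to recover all three assertions from the theory of osculating flags and associated maps, reducing them to a single flag-reversal lemma. Write $\gg=(L,V)$ for the $g^r_d$ defining $f$, with $V\subset H^0(\widetilde{Y},L)$ of dimension $r+1$ and $\mathbb{P}^r=\mathbb{P}(V)$. The first step is to attach to each $P\in\widetilde{Y}$ its osculating flag
$$
V_0(P)\subset V_1(P)\subset\cdots\subset V_{r-1}(P)\subset V,
$$
where $V_i(P)$, of projective dimension $i$, is the $i$-th osculating space. These are read off from the jet (principal parts) maps $V\otimes\O_{\widetilde{Y}}\to\mathcal{P}^i(L)$: the fibre $V_i(P)$ is the image of $V$ in $\mathcal{P}^i(L)|_P$, and the loci where this image drops rank are governed by the ramification sequence of $\gg$ at $P$, whose successive differences are the integers $k_i(P)$. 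Since $Y$ is nondegenerate the flag is complete at a general point, and the top space $V_{r-1}(P)$ is the osculating hyperplane; thus $f^\vee$ is the $(r-1)$-st associated map $P\mapsto V_{r-1}(P)^{\perp}\in\mathbb{P}(V^\vee)=(\mathbb{P}^r)^\vee$.

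The heart of the argument is the flag-reversal lemma: \emph{the osculating flag of $f^\vee$ at $P$ is the annihilator flag of the osculating flag of $f$}, i.e. the $i$-th osculating space of $f^\vee$ equals $V_{r-1-i}(P)^{\perp}$ for every $i$. Granting this, parts (ii) and (iii) are formal. Biduality (iii) holds because the top osculating space of $f^\vee$ is $V_0(P)^{\perp}$, whose annihilator is the point $V_0(P)=f(P)$, so $(f^\vee)^\vee=f$. Assertion (ii) holds because passing to annihilators reverses the flag and hence reverses the pattern of gaps, so the $i$-th stationary index of $f^\vee$ at $P$ is the $(r+1-i)$-st stationary index of $f$, that is $k_i^*(P)=k_{r+1-i}(P)$. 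I would prove the flag-reversal lemma by a purely local computation: fixing a local coordinate $t$ at $P$ and a local frame for $L$, one writes a local Taylor expansion of a lift of $f$, builds the osculating flag from the successive jet vectors, and checks directly that the osculating flag of the corresponding Wronskian-type map $f^\vee$ is the annihilator flag, reading off the reversal of the $k_i(P)$ simultaneously.

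For the degree formula (i), I would compute $d^*=\deg Y^\vee$ as $\deg(f^\vee)^*\O_{(\mathbb{P}^r)^\vee}(1)$, dividing by the degree of $f^\vee$ onto $Y^\vee$ if $f^\vee$ fails to be birational (which does not happen in the cases needed later). This is the degree of the top associated curve, so it is computed by the generalized (osculating) Pl\"ucker formulas, which relate the degrees of the consecutive associated curves $d_0=d,d_1,\dots,d_{r-1}=d^*$ to one another through the stationary indices. Chaining these relations and collecting the total ramification $k_i=\sum_P k_i(P)$ yields $d^*=rd-\sum_{i=1}^{r-1}(r-i)k_i$.

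The main obstacle is the flag-reversal lemma together with the exact ramification bookkeeping it forces. The generic picture — osculating hyperplanes dualizing to points and osculating developments reversing — is classical; the delicate part is making it precise at the special points where the osculating flag degenerates (cusps, hyperflexes and higher stationary points), where one must verify the local jet computation so that each stationary index transforms exactly as $k_i^*(P)=k_{r+1-i}(P)$ and contributes the correct amount to $d^*$. Once this local analysis is in place at every point of $\widetilde{Y}$, assertions (i)--(iii) follow. In particular, (ii) shows that a cusp of $Y$, where the ramification sequence is $(0,1,\dots,1)$ (so $k_1(P)>0$), dualizes to a point whose ramification sequence is $(0,\dots,0,1)$, i.e. an ordinary ramification point of $Y^\vee$ — exactly the correspondence exploited in the sequel.
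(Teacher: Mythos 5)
The paper gives no proof of this proposition: it is quoted directly from Piene's duality theorem \cite[Thm.~5.1]{Pi}, and the unlabelled remark that follows it in the text explains that the only point needing care, the pointwise form of (ii), is extracted either from Piene's proof or from \cite[pf. of Thm.~19]{KS}. Your sketch --- osculating flags read off the jet maps, a flag-reversal lemma established by a local Taylor/Wronskian computation, (ii) and (iii) deduced formally from flag reversal, and (i) from the generalized Pl\"ucker formulas --- is precisely the classical route taken in those references, so as an outline it reproduces the intended argument rather than offering an alternative; the local verification at the stationary points, which you defer, is exactly the content of \cite[pf. of Thm.~19]{KS} and is where all the work lies.

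There is, however, one concrete problem in your treatment of (i). Chaining the generalized Pl\"ucker formulas $d_{i+1}=2d_i-d_{i-1}+(2g-2)-k_{i+1}$, with $d_{-1}=0$, $d_0=d$ and $g=g(\widetilde{Y})$, yields
\[
d^{*}=d_{r-1}=rd+r(r-1)(g-1)-\sum_{i=1}^{r-1}(r-i)k_i,
\]
so your final step does not give the stated formula but rather this one, which carries an extra term $r(r-1)(g-1)$. (Sanity check for $r=2$: a smooth plane curve of degree $d\geq 3$ has $k_1=0$ and dual degree $d(d-1)=2d+2g-2$, not $2d$.) The formula as printed in the proposition is the genus-one specialization, which is the only case the paper ever uses ($\widetilde{Y}=E$); your derivation is fine once you either retain the genus term or state explicitly that you are proving (i) for $g(\widetilde{Y})=1$.
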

\begin{remark} Notice that our indices are rescaled in comparison with Piene's ones. Concerning item (ii), Theorem 5.1 in \cite{Pi} only states that $k_i^*=k_{r+1-i}$; the fact that such equality holds pointwise can be easily deduced either from Piene's proof or from \cite[pf. of Thm. 19]{KS}.
\end{remark}
Assume now that $\gg\in G^r_d(E)$ is base point free and defines a birational map $\phi: E\to X\subset\mathbb{P}^r$, where $X$ has degree $d$ and $p-1$ cusps. Also assume that, outside of the points $P_1,\ldots,P_{p-1}$ mapping to the cusps of $X$, the linear series $\gg$ has only simple ramification; then, by Remark \ref{warwick}, $\gg$ has $a$ points $Q_1,\ldots, Q_{a}$ of simple ramification, where $a:=(r+1)d-r(p-1)$. Notice that a general $\gg\in \G^r_d(|L_{0}(e_0,p)|)$ is expected to satisfy our assumptions. With the above notation, the cuspidal points satisfy $k_1(P_j)=1$ and $k_i(P_j)=0$ for $i\neq 1$, while the other ramification points satisfy $k_i(Q_j)=0$ for $i\neq r$ and $k_r(Q_j)=1$. Therefore, our hypotheses on $\gg$ yield $k_1=p-1$, $k_r=a$ and $k_i=0$ for $2\leq i\leq r-1$. By Proposition \ref{piene}, the dual curve $X^\vee$ of such an $X$ would have degree $d^*=rd-(r-1)(p-1)$ and $\gg^*$ would have $a$ ordinary cusps at the points $Q_1,\ldots,Q_a$ and simple ramification at $P_1,\ldots,P_{p-1}$. The existence of $\gg^*$ can be proven more easily than that of $\gg$: 
\begin{thm}\label{zurigo}
Let $p-1\geq r(r+2)$ and $\rho(p,r,d)\geq-r(r+2)$. Then $\G^r_d(|L_{0}(e_0,p)|)$ is nonempty with at least one irreducible component $\G$ of the expected dimension. A general $\gg\in \G$ is birational and has $p-1$ ordinary cusps and only ordinary ramification elsewhere. 
\end{thm}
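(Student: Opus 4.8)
The strategy is to construct, instead of $\gg$ directly, its Piene dual, exploiting the fact that under the duality of Proposition \ref{piene} the $p-1$ cusps of the desired image curve $X=\phi_{|\gg|}(E)\subset\PP^r$ correspond to \emph{simple} ramification points of the dual curve $X^\vee$, while the comparatively few simple ramification points of $X$ become the cusps of $X^\vee$. Set $d^*:=rd-(r-1)(p-1)$ and $a:=(r+1)d-r(p-1)$; these are precisely the degree of $X^\vee$ and the number of its cusps predicted by Proposition \ref{piene}. A direct computation gives $a=\rho(p,r,d)+r(r+2)$, so the assumption $\rho(p,r,d)\geq-r(r+2)$ is equivalent to $a\geq 0$; moreover, the adjusted Brill--Noether number for a $g^r_{d^*}$ on $E$ carrying $a$ cusps is $\rho(a+1,r,d^*)=p-(r+1)^2$, which is nonnegative precisely when $p-1\geq r(r+2)$. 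Thus the two numerical hypotheses of the theorem are exactly the conditions making the dual problem have nonnegative expected dimension.

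First I would apply Theorem \ref{minestrone} to the dual problem. For general $Q_1,\dots,Q_a\in E$ it yields that $G^r_{d^*}(E,(Q_j,(0,1,\dots,1))_{j=1}^a)$ is nonempty of the expected dimension $\rho(a+1,r,d^*)$, and that a general member $\gg^*$ has ramification sequence exactly $(0,1,\dots,1)$ at each $Q_j$, i.e.\ an ordinary cusp there. Allowing the $Q_j$ to move produces a family of dimension $a+\rho(a+1,r,d^*)$, which a short computation identifies with $p-1+\rho(p,r,d)$, the expected dimension of $\G^r_d(\{L_0(e_0,p)\})$. By the Pl\"ucker formula (Remark \ref{warwick}), the ramification of $\gg^*$ away from the $Q_j$ has total weight $(r+1)d^*-ar=p-1$.

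The main step is then to verify that a general $\gg^*$ in this family is base point free, birational onto its image (for $r\geq 2$), and carries only simple ramification at points other than the $Q_j$; by the weight count just made there are then exactly $p-1$ such simple ramification points. I would establish each property by a dimension estimate, showing that the sublocus of the dual family where $\gg^*$ acquires a base point, fails to be birational, or carries a non-simple ramification point off the $Q_j$ has strictly smaller dimension, so that the general member avoids all of them; where convenient this can be checked by a further degeneration on $E$. This is the technical heart of the argument, and the hardest part: Theorem \ref{minestrone} controls the behaviour of $\gg^*$ only at the imposed points $Q_j$, so birationality and the simplicity and genericity of the remaining ramification must be obtained independently, and one must ensure that the extra ramification does not collide with the prescribed cusps.

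Finally I would dualize. For a general $\gg^*$ as above, Proposition \ref{piene}(iii) gives that $\gg:=(\gg^*)^*$ is a $g^r_d$ on $E$ whose image has degree $rd^*-(r-1)a=d$, while the pointwise form of Proposition \ref{piene}(ii) (cf.\ the remark following it) turns the $p-1$ simple ramification points of $\gg^*$ into $p-1$ ordinary cusps of $\gg$ and the $a$ ordinary cusps of $\gg^*$ into simple ramification of $\gg$; birationality is preserved since duality is an involution. This exhibits an irreducible component of $\G^r_d(\{L_0(e_0,p)\})$ of the expected dimension $p-1+\rho(p,r,d)$ whose general member is birational with exactly $p-1$ ordinary cusps and only ordinary ramification elsewhere. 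To pass to $\G^r_d(|L_0(e_0,p)|)$, I would use that it is the fibre of the map $\G^r_d(\{L_0(e_0,p)\})\to\Pic^{p-1}(E)\cong E$ sending $\gg$ to $\O_E(P_1+\cdots+P_{p-1})$; translation by $t\in E$ shifts this line bundle by $(p-1)t$, and since multiplication by $p-1$ is surjective on $E$ the map is dominant. Hence a general fibre is nonempty of dimension $p-2+\rho(p,r,d)$, which is the assertion of the theorem.
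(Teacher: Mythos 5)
Your overall strategy is exactly the one the paper uses: pass to the Piene dual, where the $p-1$ cusps become simple ramification and the $a:=(r+1)d-r(p-1)$ simple ramification points become cusps, observe that $\rho(a+1,r,d^*)=p-1-r(r+2)\geq 0$ so that Theorem \ref{minestrone} (via Corollary \ref{iuia}) produces a component of $\G^r_{d^*}(|L_0(e_0,a+1)|)$ of the expected dimension $p-2+\rho(p,r,d)$, and then dualize back using Proposition \ref{piene}(ii)--(iii). All of your numerology checks out, including the Pl\"ucker count showing the residual ramification of $\gg^*$ has total weight $p-1$.

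However, there is a genuine gap at what you yourself call the technical heart. You assert that base-point-freeness, birationality, and simplicity of the residual ramification of a general $\gg^*$ ``can be established by a dimension estimate,'' but you give no mechanism for bounding the dimensions of the bad loci, and these estimates are not routine: Theorem \ref{minestrone} only controls the vanishing sequences at the $a$ imposed points $Q_j$, and a priori the entire component could consist of non-birational series (e.g.\ composites through a covering of $E$) or of series with worse ramification off the $Q_j$. Note also that duality transforms cusps into simple ramification only under the hypothesis that $k_i=0$ for $2\leq i\leq r-1$, which is precisely what needs to be proved; so the dualization step at the end is not available until this is settled. The paper closes this gap by a further degeneration: it smooths one cusp of a nondegenerate \emph{rational} curve of degree $d^*$ with $a+1$ cusps (which exists by \cite[p.~384]{EH} since $\rho(a+1,r,d^*)\geq 0$) to obtain a family with general fibre an $a$-cuspidal curve normalized by $E$, and then imports the Eisenbud--Harris results on cuspidal rational curves --- \cite[Thm.~3.1]{EH} for birationality and ordinary cusps/ramification of a general $g^r_{d^*}$ on the central fibre, \cite[Thm.~4.5]{EH} for density of $G^r_{d^*}(X_0)$ in $\overline{G}^r_{d^*}(X_0)$, and \cite[Props.~5.5--5.7]{EH} to propagate these properties to the general fibre. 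Some such input is indispensable; without it your argument establishes nonemptiness and the dimension of the dual component, but not the qualitative properties of its general member, and hence not the statement of the theorem.
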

\begin{proof}
By duality, it is enough to prove existence of a component $\G^*$ of  $\G^r_{d^*}(|L_{0}(e_0,a+1)|)$ such that a general $\gg^* \in\G^*$ is birational and has $a$ ordinary cusps and $p-1$ ordinary ramification points. Easy computations yield
$$
\rho(a+1, r, d^*)=\rho((r+1)d-r(p-1)+1,r,rd-(r-1)(p-1))=p-1-r(r+2).
$$
Hence, as soon as $p-1\geq r(r+2)$, Corollary \ref{iuia} implies the existence of  a component $\G^*$ of  $\G^r_{d^*}(|L_{0}(e_0,a+1)|)$ of the expected dimension 
$$
a-1+\rho(a+1, r, d^*)=(r+1)d-(r-1)(p-1)-1-r(r+2)=p-2+\rho(p,r,d).$$
In order to show that a general $\gg^*\in \G^*$ defines a birational map, has $a$ ordinary cusps and only ordinary ramification elsewhere, we proceed by degeneration to a rational curve with $a+1$ cusps as in \cite{EH}. By \cite[p. 384]{EH}, there exists a nondegenerate rational curve $X_0$ of degree $d^*$ in $\mathbb{P}^r$ having $a+1$ cusps because $\rho(a+1, r, d^*)\geq 0$. Since a cusp is locally smoothable, there is a flat family of curves $\mathcal{X}\to B$ over a $1$-dimensional base $B$ with central fiber $X_0$ and general fiber $X_b$ having $a$ cusps and normalization given by the elliptic curve $E$. We can also assume that for a general $b\in B$ the curve $X_b$ lies in $|L_{0}(e_0,a+1)|$ and the points of $E$ mapping to the cusps of $X_b$ are general. We consider the relative scheme $\G^r_{d^*}(\mathcal{X})\to B$ with fiber over a general $b\in B$ given by $G^r_{d^*}(X_b)$ and central fiber equal to $$G^r_{d^*}(X_0):=\{\gg\in G^r_{d^*}(\mathbb{P}^1)\;|\; \dim|\gg(-2y_i)|=\dim|\gg(-y_i)|=r-1,\,i=1,\ldots,a+1\},$$
where $y_1,\ldots, y_{a+1}$ are the points of $\mathbb{P}^1$ sent to the cusps of $X_0$ by the normalization map. We have already shown the existence of a component $\G^*(\mathcal{X})$ of $\G^r_{d^*}(\mathcal{X})$ of relative dimension $\rho(a+1,r,d^*)$. Since $G^r_{d^*}(X_0)$ is dense in $\overline{G}^r_{d^*}(X_0)$ by \cite[Thm. 4.5]{EH}, a general point $\gg_0$ of the fiber $\G^*(X_0)$ over $0$ lies in $G^r_{d^*}(X_0)$. By \cite[Thm. 3.1]{EH}, such a $\gg_0$ defines a birational map and has $a+1$ ordinary cusps at $y_1,\ldots, y_{a+1}$ and only ordinary ramification elsewhere. In order to prove the same properties for a general $\gg$ in a general fiber $\G^*(X_b)$, one proceeds exactly as in \cite[Props. 5.5, 5.6 and 5.7]{EH}.
\end{proof}
Now, we set $p':=\max\{p\;|\; \rho(p,r,d)\geq -r(r+2)\}$, that is,
$$
p'=1+\frac{(r+1)d-a'}{r},\textrm{  with } 0\leq a'\leq r-1.
$$
By definition, $\G^r_d(\{L_{0}(e_0,p')\})\subseteq \G^r_d(\{L_{0}(e_0,p\})$ for all $p$ such that $\rho(p,r,d)\geq -r(r+2)$. Furthermore, the codimension of $\G^r_d(\{L_{0}(e_0,p+1)\})$ in $\G^r_d(\{L_{0}(e_0,p)\})$ is at most $r-1$. Therefore, as soon as $\G^r_d(\{L_{0}(e_0,p')\})$ has an irreducible component of the expected dimension, then the same holds for $\G^r_d(\{L_{0}(e_0,p)\})$ for any $p<p'$. The inequality $p'-1\geq r(r+2)$ can be written as $$d\geq r(r+1)+(a'-r)/(r+1)$$ and this is equivalent to the requirement $d\geq r(r+1)$ because $0\leq a'\leq r-1$.

\begin{cor}\label{tardi}
Let $d\geq r(r+1)$ and $-r(r+2)\leq\rho(p,r,d)<0$. Then $\G^r_d(|L_{0}(e_0,p)|)$ is nonempty with at least one irreducible component $\G$ of the expected dimension $p-2+\rho(p,r,d)$ such that a general $\mathfrak{g}\in \G$ defines a birational map to $\mathbb{P}^r$. In particular, $|L_{0}(e_0,p)|^r_d$ has an irreducible component of the expected dimension, too.
\end{cor}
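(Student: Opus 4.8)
The plan is to deduce the corollary from Theorem \ref{zurigo}, applied at the extremal value $p=p'$, together with the monotonicity of the loci $\G^r_d(\{L_{0}(e_0,q)\})$ recorded in the paragraph preceding the statement.

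For the base case, recall that the hypothesis $d\geq r(r+1)$ is equivalent to $p'-1\geq r(r+2)$, and that $\rho(p',r,d)\geq -r(r+2)$ by the definition of $p'$. Thus Theorem \ref{zurigo} applies with $p$ replaced by $p'$ and produces an irreducible component of $\G^r_d(|L_{0}(e_0,p')|)$ of the expected dimension $p'-2+\rho(p',r,d)$ whose general member defines a birational map to $\mathbb{P}^r$. As in the proof of Corollary \ref{iuia}, $\G^r_d(|L_{0}(e_0,q)|)$ is a fiber of the map $\G^r_d(\{L_{0}(e_0,q)\})\to \Pic^{q-1}(E)\simeq E$, so this component lifts to a component of $\G^r_d(\{L_{0}(e_0,p')\})$ of the expected dimension $p'-1+\rho(p',r,d)$, still with birational general member.

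Next I would descend from $p'$ to the prescribed $p\leq p'$. The preceding paragraph supplies the inclusions $\G^r_d(\{L_{0}(e_0,q+1)\})\subseteq\G^r_d(\{L_{0}(e_0,q)\})$ of successive codimension at most $r-1$; since $\rho(q,r,d)-\rho(q+1,r,d)=r$, the expected dimension $q-1+\rho(q,r,d)$ of $\G^r_d(\{L_{0}(e_0,q)\})$ decreases by exactly $r-1$ as $q$ grows by one. Hence, whenever a component of $\G^r_d(\{L_{0}(e_0,q+1)\})$ has the expected dimension, the component of $\G^r_d(\{L_{0}(e_0,q)\})$ containing it has dimension at most the expected one; as every such Brill--Noether locus has all components of at least the expected dimension, equality follows. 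Iterating from $p'$ down to $p$, I obtain a component of $\G^r_d(\{L_{0}(e_0,p)\})$ of the expected dimension, and restricting to the fiber over a general point of $E$ gives the desired component $\G\subset\G^r_d(|L_{0}(e_0,p)|)$ of dimension $p-2+\rho(p,r,d)$. Since $\G$ contains the birational locus descending from level $p'$ and birationality is open on $G^r_d(E)$, a general $\gg\in\G$ defines a birational map to $\mathbb{P}^r$.

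Finally, for the ``in particular'' I would project $\G$ by $\pi_0\colon\G^r_d(|L_{0}(e_0,p)|)\to |L_{0}(e_0,p)|_s$. As $\rho(p,r,d)<0$, the expected dimension $\min\{p-2,\,p-2+\rho(p,r,d)\}$ of $|L_{0}(e_0,p)|^r_d$ equals $\dim\G$; since a general birational $\gg\in\G$ is recovered up to finitely many choices from its cuspidal image $X=\nu(C)$, the map $\pi_0|_{\G}$ is generically finite onto its image, so $\pi_0(\G)$ is a component of $|L_{0}(e_0,p)|^r_d$ of the expected dimension. I expect the descent step to be the main obstacle: one must verify that the codimension-$(r-1)$ bound combines with the expected-dimension arithmetic to fix the dimension exactly at each intermediate level, and that the chosen component keeps inheriting the birational locus from level $p'$.
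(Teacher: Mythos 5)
Your proposal is correct and follows essentially the same route as the paper: Theorem \ref{zurigo} applied at the extremal genus $p'$ (using the equivalence $d\geq r(r+1)\Leftrightarrow p'-1\geq r(r+2)$), followed by descent through the inclusions $\G^r_d(\{L_{0}(e_0,q+1)\})\subseteq\G^r_d(\{L_{0}(e_0,q)\})$ of codimension at most $r-1$, balanced against the drop of exactly $r-1$ in expected dimension and the determinantal lower bound — which is precisely the paper's (essentially unwritten) argument, together with openness of birationality. The only point stated loosely is the generic finiteness of $\pi_0|_{\G}$ for the ``in particular'': the fiber of $\pi_0$ over $X$ consists of all series in $\G$ with cusps at the prescribed points, not only those recovered from the image curve of a fixed $\gg$; but the paper itself asserts this last clause without justification, so this is not a deviation from its proof.
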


We can now conclude the proofs of Theorems \ref{thm:main4} and  \ref{thm:main5}.

\renewcommand{\proofname}{Proof of Theorem \ref{thm:main4}(ii),(iii)}

\begin{proof}
The condition $\rho(p,r,d)\geq -r(r+2)$ is necessary for nonemptiness of $|L|^r_d$ by \eqref{eq:rhocond}. Under the further assumption that $d\geq r(r+1)$, Corollary \ref{tardi}, Lemma \ref{neve} and Proposition \ref{IUIA} ensure the existence of a component $Z$ as in (ii). 
\end{proof}

\renewcommand{\proofname}{Proof of Theorem \ref{thm:main5}}

\begin{proof}
Let $S$ be a general abelian surface with polarization $L$ of type $(1,n)$, where $n=p-1$, and let $\psi:|L|_s\to \M_p$ be the moduli map. By Theorem \ref{thm:main4}(ii), $|L|^r_d$ has an irreducible component $Z$ of the expected dimension $p-2+\rho(p,r,d)$. Let $\M$ be an irreducible component of $\M^r_{p,d}$ such that $\psi(Z)$ is an irreducible component of $\M\cap \psi(|L|)$. The dimensional statement for $\M$ follows because all inequalities in (\ref{muffin}) must be equalities. We consider an irreducible component $\G_1$ of $\G^r_d(|L|)$ mapping finitely onto $Z$ such that, for a general $(C,\gg)\in\G_1$, the linear series $\gg$ defines a birational map to $\mathbb{P}^r$; the existence of such a $\G_1$ is ensured by Proposition \ref{IUIA}, Lemma \ref{neve} and Theorem \ref{zurigo}. One finds an irreducible component $\G$ of $\G^r_d$ such that $\G_1=\G\times_\M Z$ and easily verifies the desired properties for $\G$.
\end{proof}

\begin{remark} \label{rem:nonconst}
  Theorem \ref{thm:main4}(i)-(ii) implies that, in contrast to the $K3$ case, neither the gonality nor the Clifford index of smooth curves in $|L|$ is constant.

Indeed, by (i), a general curve in $|L|$ has the gonality and Clifford index of a general genus $p$ curve, namely, $k_{gen}:=\lfloor \frac{p+3}{2} \rfloor$ and
$c_{gen}:=\lfloor \frac{p-1}{2} \rfloor$, respectively. At the same time, by (ii),
there are smooth curves in $|L|$ carrying linear series of type $g^1_k$ with $k=\lfloor \frac{p+1}{2}\rfloor$, whence their gonality is at most $\lfloor \frac{p+1}{2}\rfloor$ and their Clifford index at most $\lfloor \frac{p+1}{2}\rfloor-2=\lfloor \frac{p-3}{2}\rfloor$.

 We can be more precise. Any smooth curve $C$ satisfies $\gon C-3 \leq \Cliff C \leq \gon C-2$, with equality on the right if the curve carries  only finitely many pencils of minimal degree \cite{CM}. Theorem \ref{thm:main2} for $\delta=0$ then implies that, if $p$ is even, then there is a codimension-$2$ family of smooth curves in $|L|$ of gonality $k_{gen}-1$ and Clifford index $c_{gen}-1$. If instead $p$ is odd, then there is a codimension-$1$ family of smooth curves in $|L|$ of gonality $k_{gen}-1$ and Clifford index $c_{gen}-1$, and a codimension-$3$ family
of gonality $k_{gen}-2$ and Clifford index $c_{gen}-2$. 
\end{remark}

\begin{remark}\label{menomale}
The inequality $d\geq r(r+1)$ implies that any $p$ with $-r(r+2)\leq\rho(p,r,d)<0$ satisfies: $$\expdim |L_{0}(0,p)|^r_d \geq \expdim |L_{0}(0,p')|^r_d=p'-2+\rho(p',r,d)\geq 0;$$ in fact, this condition is clearly necessary for the existence of a component of $|L_{0}(0,p)|^r_d$ having the expected dimension. 
\end{remark}
%\begin{remark}
%For fixed $r$ and $d$, set $p_{\min}:=\min\{p\,|\,\rho(p,r,d)<0\}$. One can easily show that the inequality $d\geq r(r+1)$ is equivalent to the requirement $d\leq p_{\min}-1$.
%\end{remark}

\begin{remark}\label{confronto}
Components of $\M^r_{p,d}$ of the expected dimension under certain conditions on $p,r,d$ such that $\rho(p,d,r)<0$  have been constructed by various authors following the foundational  works of Sernesi \cite {Se} and Eisenbud-Harris \cite{EH1} (see \cite{BE,P,L1,L2}), most recently by 
Pflueger \cite{Pf}. 

One can explicitly check that the  inequalities $d \geq r(r+1)$ and $\rho(p,r,d) \geq -r(r+2)$  appearing in our Theorem \ref{thm:main5} are weaker than the ones appearing in the above cited papers for infinitely many triples $(p,r,d)$. This is for instance the case when $r \geq 4$ and $r^2+2r-2 \leq p \leq  r^2+7r+7$.  Thus, in these cases the component of $\M^r_{p,d}$ detected by Theorem \ref{thm:main5} was heretofore unknown.

We also remark that our method of proof by degeneration to cuspidal elliptic curves and by means of Piene's duality is completely different, and in many ways simpler, than Eisenbud and Harris' limit linear series and Sernesi's original approach of attaching rational curves. 
\end{remark}

\renewcommand{\proofname}{Proof}

\begin{appendices}
\section{Appendix: A stronger version of Theorem \ref{thm:necex}}

We will prove a strenghtening of Theorem \ref{thm:necex} for $r \geq p-1-d$. In this range the bundles $\E_{C,\A}$ have  nonvanishing $H^1$ and the existence of stable extensions yields a stronger bound than \eqref{eq:rhocond}. Also note the slightly stronger assumption on $[C]$, which is no longer an open condition in the moduli space of polarized abelian surfaces, but holds off a countable union of proper closed subsets.

\begin{thm} \label{thm:necexforte}
Let $C$ be a reduced and irreducible curve of arithmetic genus $p=p_a(C)$ on an abelian surface $S$ such that $[C]$ generates $\NS(S)$. 
Assume that $C$ possesses a globally generated torsion free rank one sheaf $\A$ such that $\deg \A=d \leq p-1$, $h^0(\A)=r+1$ and $r \geq p-1-d$. Set $\gamma:=\lfloor \frac{r}{p-1-d} \rfloor$ if $d < p-1$.  Then one has:
  \begin{equation}
    \label{eq:rhocondforte}
   \rho(p,r,d) +r(r+2) \geq  \begin{cases} 
\frac{1}{2}r(r+1), & \mbox{if} \; \; d = p-1, \\ 
   \gamma\Big(p-1-d\Big)\Big(r+1-\frac{1}{2} (p-1-d)(\gamma+1)\Big)   & \mbox{if} \; \; r \geq p-1-d >0.
\end{cases}
  \end{equation}
\end{thm}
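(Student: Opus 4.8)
The plan is to sharpen the argument behind Theorem~\ref{thm:necex} by using that, in the range $r\geq p-1-d$, the bundle $\E:=\E_{C,\A}$ is forced to carry nonzero first cohomology. First I would record, from \eqref{eq:p0}--\eqref{eq:p4}, that $\E$ is simple with $\rk\E=r+1$, $c_1(\E)=[C]$, $c_2(\E)=d$, $\chi(\E)=p-1-d$ and $h^2(\E)=h^0(\E^{\vee})=0$. Writing $\epsilon:=p-1-d$ and using $h^0(\E)\geq r+1$, Riemann--Roch gives
\[
h^1(\E)=h^0(\E)-\chi(\E)\geq (r+1)-\epsilon,
\]
which is strictly positive exactly in our range. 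By Serre duality on $S$ (where $\omega_S\cong\O_S$) this reads $\Ext^1(\E,\O_S)\cong H^1(\E)^{\vee}\neq 0$, so $\E$ admits nonsplit extensions by copies of $\O_S$. It is worth recording the reformulation $\rho(p,r,d)+r(r+2)=d-r\epsilon$, so that \eqref{eq:rhocondforte} is simply a lower bound on $d$ strictly stronger than the bound $d\geq r\epsilon$ furnished by \eqref{liscio}; the whole point is to upgrade the latter using this surplus cohomology.

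The heart of the proof is to feed the nonvanishing $\Ext^1(\E,\O_S)$ into an extension construction. For each admissible integer $j$ with $0\leq j\epsilon\leq r$, I would build, by a $j$-fold universal extension of $\E$ by trivial bundles,
\[
0\longrightarrow \O_S^{\oplus a_i}\longrightarrow \G_{i+1}\longrightarrow \G_i\longrightarrow 0,\qquad \G_0:=\E,\quad a_i:=h^1(\G_i),
\]
a bundle $\G_j$ with $c_1=[C]$, $c_2=d$ and $\chi=\epsilon$ preserved at each stage, while the rank increases. Because $[C]$ generates $\NS(S)$ and hence admits no nontrivial effective decomposition, Lemma~\ref{lemma:fact} guarantees that a general such $\G_j$ is again simple, so the simple--bundle inequality \eqref{eq:simple}, i.e. Bogomolov's inequality $2\rk\G_j\cdot c_2\geq(\rk\G_j-1)c_1^2$, applies. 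Substituting $c_1^2=2p-2$ and $c_2=d$ turns this into $d\geq(\rk\G_j-1)\epsilon$ when $\epsilon>0$. Since each successive batch of trivial summands shrinks by roughly $\epsilon$, the rank $\rk\G_j$ grows quadratically in $j$, so the resulting lower bound on $d$ is a concave function of $j$ maximized at $j=\gamma=\lfloor r/\epsilon\rfloor$; carrying out this elementary optimization yields the second case of \eqref{eq:rhocondforte}. The case $d=p-1$ (i.e. $\epsilon=0$) must be treated separately, since then Bogomolov degenerates to a tautology: here the same extension philosophy applies, but the terminal obstruction is purely cohomological (growth of $h^0$ against $h^0(\E^{\vee})=0$ and global generation off $C$), and it produces the triangular bound $\tfrac12 r(r+1)$.

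The main obstacle, and precisely what makes this phenomenon invisible on K3 surfaces, is the irregularity $h^1(\O_S)=2\neq 0$. On the one hand it \emph{powers} the iteration, since each extension by $\O_S$ reintroduces a copy of $H^1(\O_S^{\oplus a_i})$ through the long exact sequence, so the first cohomology never dies and one can always extend again. On the other hand this same feature makes the bookkeeping delicate: I must track how $h^1(\G_i)$ — equivalently $h^0(\G_i)$, since $h^2$ stays zero — evolves under each universal extension, identify the admissible $a_i$, and verify that simplicity persists along the whole tower (this is exactly where the strengthened hypothesis that $[C]$ \emph{generate} $\NS(S)$, rather than merely being indecomposable, is used). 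Establishing this cohomological propagation and the resulting rank growth is the technical core; once it is in place, the final numerical estimate reducing to \eqref{eq:rhocondforte} is a routine summation and optimization over the step count.
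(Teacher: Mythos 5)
Your overall architecture matches the paper's: build a tower of universal extensions of $\E_{C,\A}$ by trivial bundles, observe that $c_1$, $c_2$ and $\chi$ are preserved while the rank grows, apply the simple-bundle inequality \eqref{eq:simple} at the top of the tower, and optimize over the number of steps $N=\gamma=\lfloor r/(p-1-d)\rfloor$. Your reformulation $\rho(p,r,d)+r(r+2)=d-r(p-1-d)$ and the translation of \eqref{eq:simple} into $d\geq(\rk-1)(p-1-d)$ are both correct, and the final optimization is indeed routine once the rank growth $\rk\G_j\geq(j+1)\bigl(r+1-\tfrac{j}{2}(p-1-d)\bigr)$ is in place.

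However, there is a genuine gap at the step you describe as ``Lemma~\ref{lemma:fact} guarantees that a general such $\G_j$ is again simple.'' Lemma~\ref{lemma:fact}(ii) applies only to sheaves that are \emph{generically globally generated} (with $h^2(\F\otimes\omega_X)=0$), and this property is \emph{not} inherited by the extensions: from $0\to\O_S^{\oplus a_i}\to\G_{i+1}\to\G_i\to 0$ the sections coming from $\O_S^{\oplus a_i}$ only generate that subsheaf, and the sections of $\G_i$ need not lift (indeed the whole point of the universal extension is that the coboundary $H^1(\G_i)\to H^2(\O_S^{\oplus a_i})$ is an isomorphism, which forces part of the cohomology of $\G_i$ not to come from $\G_{i+1}$). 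So after the first step you can no longer invoke Lemma~\ref{lemma:fact}, and simplicity of $\G_j$ for $j\geq 1$ is exactly the nontrivial point. The paper resolves this with Proposition~\ref{prop:stabile}, proving by induction that each $\E_i$ is \emph{stable} (hence simple): given a destabilizing subsheaf $\M\subset\E_i$, one compares it with its image $\M'$ in $\E_{i-1}$ via a nine-term diagram, uses that the kernel quotient $\K'$ of $\O_S^{\oplus h_i}$ is globally generated, and derives a contradiction with the stability of $\E_{i-1}$ and the indecomposability of $[C]$. This inductive stability argument, not Lemma~\ref{lemma:fact}, is where the hypothesis that $[C]$ generates $\NS(S)$ does its work. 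Two further points you leave unproved: the propagation estimate $h^1(\G_{i+1})\geq h^1(\G_i)-\chi$ (which follows from the long exact sequence plus $h^1(\O_S)=2$, $h^2(\O_S)=1$ and $h^0(\G_i)=\chi+h^1(\G_i)$ --- you flag it as the technical core but the bound is needed precisely to know the tower reaches height $\gamma$), and the case $d=p-1$, where the paper's actual argument is the short trick of replacing $\A$ by $\A\otimes\O_C(-P)$ for a general point $P$, reducing to the case $p-1-d'=1$ with $r'=r$, rather than the ``purely cohomological obstruction'' you gesture at.
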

The proof needs the following technical result:

\begin{prop} \label{prop:stabile}
Let $\E$ be a vector bundle on an abelian surface $S$ satisfying:
\begin{itemize}
\item[(i)] $[c_1(\E)]$ generates $\NS(S)$; 
\item[(ii)] $\E$ is generically globally generated;
\item[(iii)] $H^2(\E)=0$.
\end{itemize}
Let $N \geq 0$ be an integer such that, for $i=1, \ldots, N,$ there exists a sequence of ``universal extensions''
\begin{equation}
  \label{eq:univext}
 \xymatrix{
0 \ar[r] & \O_S^{\+ h_i} \ar[r] & \E_i \ar[r] & \E_{i-1} \ar[r] & 0,} 
\end{equation}
where $\E_0:=\E$, $h_i:=h^1(\E_{i-1})$ and the coboundary map $H^1(\E_{i-1}) \to H^2(\O_S^{\+ h_i}) \cong \CC^{h_i}$ is an isomorphism (this condition is empty for $N=0$).

Then each $\E_i$ is stable with respect to any polarization and satisfies $H^2(\E_i)=0$.
\end{prop}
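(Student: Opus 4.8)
The plan is to prove both assertions simultaneously by induction on $i$, with $i=0$ as the base case. I would first record the bookkeeping: writing $L:=c_1(\E)$, the sequences \eqref{eq:univext} have trivial subbundles, so every $\E_i$ satisfies $c_1(\E_i)=L$. Applying Lemma \ref{lemma:fact}(i) to $\E$ itself (using (ii) and (iii)) shows $L$ is effective and nonzero, hence, as it generates $\NS(S)\cong\ZZ$, it is the ample generator. Consequently, for a polarization $H=kL$ ($k\geq 1$) and a subsheaf $\F\subset\E_i$ of rank $s$ with $c_1(\F)=mL$, $\mu$-stability reads $m\,\rk\E_i<s$, which is independent of $k$; thus stability with respect to some polarization is equivalent to stability with respect to every one. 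Since $0<s<\rk\E_i$ forces $m\leq 0$ in the stable range, I would reformulate stability as the single statement that \emph{every proper nonzero torsion-free quotient $\Q$ of $\E_i$ has $c_1(\Q)>0$}.

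The vanishing $H^2(\E_i)=0$ is the easy half. In the long exact sequence of \eqref{eq:univext} the coboundary $H^1(\E_{i-1})\to H^2(\O_S^{\+h_i})$ is an isomorphism by hypothesis, so the subsequent map $H^2(\O_S^{\+h_i})\to H^2(\E_i)$ is zero; hence $H^2(\E_i)\hookrightarrow H^2(\E_{i-1})=0$ by the inductive hypothesis.

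For stability I would use the quotient criterion. The base case $i=0$ is immediate: any proper nonzero torsion-free quotient $\Q$ of $\E_0=\E$ is generically globally generated (being a quotient of such a sheaf) and satisfies $H^2(\Q)=0$ (from surjectivity of $H^2(\E)\to H^2(\Q)$), so Lemma \ref{lemma:fact}(i) forces $c_1(\Q)$ effective and nonzero, i.e. $c_1(\Q)>0$. For the inductive step, assume $\E_{i-1}$ stable and suppose $\E_i$ has a proper nonzero torsion-free quotient $\Q$ with $c_1(\Q)=cL$, $c\leq 0$. Let $\mathcal R$ be the image of $\O_S^{\+h_i}$ in $\Q$ and $\overline\Q$ the torsion-free part of $\Q/\mathcal R$, a torsion-free quotient of $\E_{i-1}=\E_i/\O_S^{\+h_i}$. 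As $\mathcal R$ is globally generated, $c_1(\mathcal R)\geq 0$, and $c_1(\Q)\geq c_1(\mathcal R)+c_1(\overline\Q)\geq c_1(\overline\Q)$. If $\overline\Q$ is a proper nonzero quotient of $\E_{i-1}$, stability of $\E_{i-1}$ gives $c_1(\overline\Q)>0$; if $\overline\Q$ has full rank then $\overline\Q\cong\E_{i-1}$ and $c_1(\overline\Q)=L$; in either case $c_1(\Q)\geq L$, contradicting $c\leq 0$. The only surviving possibility is $\overline\Q=0$, i.e. $\Q/\mathcal R$ is supported in dimension $0$, which forces $c=0$, $c_1(\mathcal R)=0$, and $\Q$ generically generated by the image of $\O_S^{\+h_i}$.

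This borderline case — a torsion-free quotient $\Q$ with $c_1(\Q)=0$ generically generated by the trivial subbundle — is the crux, and it is exactly where the full strength of the hypothesis (that the coboundary is an \emph{isomorphism}, not merely that the $\E_i$ exist) must enter. Here $\mathcal R$ is globally generated with $c_1(\mathcal R)=0$, so $\det\mathcal R\in\Pic^0(S)$ is globally generated and hence trivial, and $\mathcal R$ is generically trivial; moreover, setting $\F:=\ker(\E_i\to\Q)$, the inclusion $\F/(\F\cap\O_S^{\+h_i})\hookrightarrow\E_{i-1}$ has full rank, which exhibits the pushout of \eqref{eq:univext} along the nonzero quotient $\O_S^{\+h_i}\twoheadrightarrow\mathcal R$ as generically split. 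I would rule this out by showing that such a generic partial splitting is incompatible with the non-degeneracy of the extension class encoded in the isomorphism $H^1(\E_{i-1})\xrightarrow{\sim}H^2(\O_S^{\+h_i})$ — morally, the universal extension splits off no nontrivial trivial sub-extension, even generically — together with $H^2(\E_i)=0$ (equivalently $\Hom(\E_i,\O_S)=0$), which eliminates the competing possibility of a genuine trivial quotient. Making this precise, via the saturation of $\O_S^{\+h_i}\cap\F$ inside $\O_S^{\+h_i}$ and the induced pushout extension, is the main technical obstacle, and it is the only step in which the isomorphism hypothesis is indispensable.
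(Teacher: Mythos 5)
Your setup and most of your inductive step track the paper's proof closely: the reduction of stability to the sign of $c_1$ of torsion-free quotients (using that $\NS(S)$ is generated by the ample class $c_1(\E)$), the $H^2$-vanishing via the coboundary isomorphism, and the case analysis on the image $\R$ of $\O_S^{\+ h_i}$ in a putative destabilizing quotient $\Q$ all correspond to the paper's diagram chase. The problem is that you stop exactly at the decisive step. Your leftover case --- a torsion-free quotient $\Q$ of $\E_i$ of intermediate rank with $c_1(\Q)=0$, generically generated by the sections coming from $\O_S^{\+ h_i}$ --- is not a ``main technical obstacle'' requiring a new non-degeneracy argument about the extension class; it is killed by the same Lemma \ref{lemma:fact}(i) you already invoked in the base case. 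You have established that $\Q$ has positive rank, is generically globally generated, and satisfies $h^2(\Q)=0$ (being a quotient of $\E_i$ with $H^2(\E_i)=0$, which you proved first); Lemma \ref{lemma:fact}(i) then says $c_1(\Q)$ is represented by an effective \emph{nonzero} divisor, contradicting $c_1(\Q)=0$. This is precisely how the paper concludes the corresponding case $\rk\Q'=0$: strict positivity of $c_1(\Q).H$ ``follows from Lemma \ref{lemma:fact} because $h^2(\Q)=h^2(\E_i)=0$ and $\Q$ is globally generated off the $1$-dimensional support of $\Q'$.''

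Two further remarks. First, your instinct that the isomorphism hypothesis on the coboundary ``must enter'' at this last point is misplaced: its entire role for stability (in fact only its surjectivity is needed) is to force $H^2(\E_i)=0$, which you have already extracted; after that the argument uses only $H^2(\E_i)=0$, generic global generation, and the hypothesis on $\NS(S)$. Second, the alternative route you sketch --- showing that the pushout of the universal extension along $\O_S^{\+h_i}\twoheadrightarrow\R$ cannot be generically split --- is not obviously workable as stated (a nonzero, even non-degenerate, extension class does not by itself preclude generic splittings), so the proof cannot be left resting on it. With the application of Lemma \ref{lemma:fact}(i) to $\Q$ inserted, your argument is complete and is essentially the paper's.
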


\begin{proof}
We proceed by induction on $i$. 

Let $i=0$. Then $H^2(\E_0)=H^2(\E)=0$ by assumption (iii). 
Assume that $\E_0=\E$ is not stable. Consider any destabilizing sequence 
 \begin{equation} \label{eq:dest}
    \xymatrix{
0 \ar[r] & \M \ar[r] & \E \ar[r] & \Q \ar[r] & 0,}
 \end{equation}
where $\M$ and $\Q$ are torsion free sheaves of positive rank; this gives $c_1(\E)=c_1(\M)+c_1(\Q)$. For any ample line bundle $H$ on $S$, we have that $c_1(\M).H >0$ because $\M$ destabilizes $\E$, 
and $c_1(\Q).H>0$ by Lemma \ref{lemma:fact}, because $H^2(\Q)=0$ and $\Q$ is globally generated off a codimension one set by \eqref{eq:dest} and assumptions (ii)-(iii).
This contradicts assumption (i).

Now assume that $i>0$ and $\E_{i-1}$ is $H$-stable with $H^2(\E_{i-1})=0$. 
The fact that $H^2(\E_i)=0$ is an immediate consequence of \eqref{eq:univext} and the coboundary map being an isomorphism. 
If $\E_i$ is not  $H$-stable, then we have a destabilizing sequence
 \begin{equation} \label{eq:desti}
    \xymatrix{
0 \ar[r] & \M \ar[r] & \E_i \ar[r] & \Q \ar[r] & 0,}
\end{equation}
with $\M$ and $\Q$ torsion free sheaves of positive rank. Let $\M'$ denote the image of the composition $\M \to \E_i \to \E_{i-1}$ of maps from \eqref{eq:desti} and \eqref{eq:univext}. Then we have a commutative diagram with exact rows and columns:
\begin{equation}
  \label{eq:diagrammone}
    \xymatrix{
& 0  \ar[d] & 0  \ar[d] & 0  \ar[d] &  \\ 
0 \ar[r] & \K \ar[r]  \ar[d] & \O_S^{\+ h_i}\ar[r]  \ar[d] & \K' \ar[r]  \ar[d] & 0 \\ 
0 \ar[r] & \M \ar[r]  \ar[d] & \E_i \ar[r]  \ar[d] & \Q \ar[r]  \ar[d] & 0 \\ 
0 \ar[r] & \M' \ar[r]  \ar[d] & \E_{i-1} \ar[r]  \ar[d] & \Q' \ar[r]  \ar[d] & 0, \\ 
& 0  & 0 & 0 & 
}
\end{equation}
defining $\K$, $\K'$ and $\Q'$. Since $\K'$ is globally generated, we have $c_1(\K).H=-c_1(\K').H \leq 0$. Hence $c_1(\M').H=c_1(\M).H-c_1(\K).H \geq c_1(\M).H >0$, as $\M$ destabilizes $\E_i$. In particular, as $\M'$ is torsion free, we have
$\rk \M' >0$. 

If $\rk \Q' >0$, then $\rk \M' < \rk \E_{i-1}$. As $\E_{i-1}$ is $H$-stable, we must have:
\[ \frac{ c_1(\M').H}{\rk \M'} < \frac{c_1(\E_{i-1}).H}{\rk \E_{i-1}}. \]
In particular, $0 < c_1(\M').H < c_1(\E_{i-1}).H$, so that $c_1(\Q').H >0$.
But then $c_1(\E)=c_1(\E_{i-1})= c_1(\M')+c_1(\Q')$ with both $c_1(\M').H >0$ and
$c_1(\Q').H >0$, contradicting hypothesis (i). 

Hence we have $\rk \Q'=0$ and $c_1(\Q')$, if nonzero, is represented by the effective cycle of the $1$-dimensional support of $\Q'$. Then $c_1(\Q).H=c_1(\K').H+c_1(\Q').H\geq 0$ and strict inequality follows from Lemma \ref{lemma:fact} because $h^2(\Q)=h^2(\E_i)=0$ and $\Q$ is globally generated off the $1$-dimensional support of $\Q'$, as $\K'$ is globally generated. 
As $\M$ destabilizes $\E_i$, we get that
$c_1(\E)=c_1(\E_{i})= c_1(\M)+c_1(\Q)$ with both $c_1(\M).H >0$ and
$c_1(\Q).H >0$, again contradicting hypothesis (i). 
\end{proof}

\renewcommand{\proofname}{Proof of Theorem \ref{thm:necexforte}}

\begin{proof}
Consider the vector bundle $\E_0:=\E_{C,\A}$ from  the proof of Theorem \ref{thm:necex}, which satisfies conditions (i)-(iii) of Proposition \ref{prop:stabile}. Moreover 
 \eqref{eq:p0} and \eqref{eq:p3} imply 
 \begin{equation}
   \label{eq:p6}
h_1:= h^1(\E_{C,\A})=  h^0(\E_{C,\A})-\chi(\E_{C,\A}) \geq r+2+d-p >0.
 \end{equation}   
Therefore, we have a ``universal extension''
\[
 \xymatrix{
0 \ar[r] & \O_S^{\+ h_1} \ar[r] & \E_1 \ar[r] & \E_{0} \ar[r] & 0,} 
\]
 where the coboundary map $H^1(\E_{0}) \to H^2(\O_S^{\+ h_1}) \cong \CC^{h_1}$ is an isomorphism.
 
If $h_2:=h^1(\E_1) >0$, we can iterate the construction. Hence, there is an integer $N >0$ and a sequence of universal extensions as in \eqref{eq:univext},
where the coboundary maps $H^1(\E_{i-1}) \to H^2(\O_S^{\+ h_i}) \cong \CC^{h_i}$ are isomorphisms and, by Proposition \ref{prop:stabile}, all $\E_i$ are stable with $H^2(\E_i)=0$ for $i=0, \ldots, N$. (We do not claim that there is a maximal such $N$; indeed, it may happen that the process does not terminate, i.e. all $h_i >0$, in which case any $N > 0$ fulfills the criteria.)

By \eqref{eq:univext} and properties \eqref{eq:p0} of $\E_0$, we have:
\begin{equation}
  \label{eq:p+} \rk \E_N =  r+1+h_1+\cdots+h_N, \; \; c_1(\E_N) =  [C], \; \; 
\chi(\E_N)  =  \chi(\E_0)=p-1-d=: \chi \geq 0.
\end{equation}
Since $\E_N$ is stable, it is simple, whence \eqref{eq:simple} and \eqref{eq:p+} yield: 
\begin{equation}
  \label{eq:ecco}
  p-1 \geq \chi \rk \E_N.
\end{equation}
The sequence \eqref{eq:univext} and coboundary maps $H^1(\E_{i-1}) \to H^2(\O_S^{\+ h_i})$ being isomorphisms yield
\begin{equation} \label{eq:hi}
h_{i+1}=h^1(\E_i) \geq 2h_i-h^0(\E_{i-1})=2h^1(\E_{i-1})-h^0(\E_{i-1})=h_i-\chi, \; \; \forall\, i \geq 1.
\end{equation}
In particular, we obtain that:
\begin{equation}
  \label{eq:acchi}
  h_i \geq h_1-(i-1)\chi \geq r+1-i\chi, \; \;  \forall\, i=1,\ldots, N.
\end{equation}
Hence, the procedure of taking extensions goes on at least until $i=N$, for any $N \geq 1$ satisfying
\begin{equation}
  \label{eq:N}
  N \chi \leq r.
\end{equation}
Note that if $N$ satisfies \eqref{eq:N}, then the inequalities \eqref{eq:ecco}
increase in strength as $N$ increases.

By \eqref{eq:p+} and \eqref{eq:acchi} we have:
\begin{eqnarray}
 \label{eq:rke} \rk \E_N & = & \sum_{i=1}^N h_i+r+1 \geq \sum_{i=0}^N (r+1-i\chi)  = (N+1)(r+1) -\chi \sum_{i=0}^N h_j \\
\nonumber
           & =& (N+1)(r+1) -\frac{ N(N+1)\chi}{2} = \Big(N+1\Big)\Big(r+1-\frac{N\chi}{2}\Big). 
\end{eqnarray}
Hence, we obtain from \eqref{eq:ecco} with $i=N$ that
\begin{equation}
  \label{eq:N>0}
p-1 \geq \chi \Big(N+1\Big)\Big(r+1-\frac{N\chi}{2}\Big),\; \mbox{or equivalently,} \; \;\rho(p,r,d) +r(r+2) \geq \chi N\Big(r+1-\frac{\chi(N+1)}{2}\Big).
\end{equation}

If $\chi >0$, then
the strongest inequality is obtained by using the largest $N$ satisfying \eqref{eq:N}, which is $N= \gamma:=\lfloor \frac{r}{\chi} \rfloor$. This proves \eqref{eq:rhocond} if $\chi>0$.

If $\chi=0$, then the left hand side of 
\eqref{eq:rhocondforte} is simply $d=p-1$, so we may assume that $r \geq 2$. The torsion free sheaf $\A':=\A \*\O_C(-P)$ for a general $P \in C$ is still globally generated and satisfies $\deg \A'=d-1=:d'$ and $r':=h^0(\A')-1=r$.
Since $1=p-1-d'<r',$ this falls into the lower line of \eqref{eq:rhocondforte}, which easily rewrites as the desired inequality 
$\rho(p,r,d) +r(r+2) \geq \frac{1}{2}r(r+1)$.
\end{proof}

\begin{remark} \label{rem:uffa}
We do not know if the stronger condition \eqref{eq:rhocondforte} is optimal, although it gets rid of the cases occurring in Examples \ref{cex1} and \ref{cex3}. One can easily verify that the inequality $d \geq r(r+1)$ is stronger than 
\eqref{eq:rhocondforte} in this range. We must however recall that our Theorem \ref{thm:main4} yields the existence of {\it birational} linear series on {\it smooth} curves; it is plausible that nonbirational linear systems, as well as torsion free sheaves on singular curves (cf. Example \ref{cex2}), may cover a wider value range of $p$, $r$ and $d$. 
We also remark that the torsion free sheaves in the quoted example satisfy equality in \eqref{eq:rhocondforte}.
\end{remark}

\begin{remark} \label{rem:h1>0}
The fact that bundles  $\E_{C,\A}$ with $h^1(\E_{C,\A}) >0$ do indeed exist when $0 \leq p-1-d \leq r$ follows from Theorem \ref{thm:main4}(ii). This shows that additional complexity arises for abelian surfaces in comparison with $K3$ surfaces, where the analogous bundles always have vanishing $H^1$. 
\end{remark}

\renewcommand{\proofname}{Proof}

\end{appendices}

%%%%%%%%%%%%%%%%%%%%%%%%%%%%%(BIBLIOGRAPHY)%%%%%%%%%%%%%%%%%%%%%%%%%%%%%%%
%
%
%%%%%%%%%%%%%%%%%%%%%%%%%%%%%%%%%%%%%%%%%%%%%%%%%%%%%%%%%%%%%%%%%%%%%%%

\end{document}